\theoremstyle{plain}
\newtheorem{thm}{Theorem}[section]
\newtheorem{lem}[thm]{Lemma}
\newtheorem{corollary}[thm]{Corollary}
\newtheorem{prop}[thm]{Proposition}
\theoremstyle{remark}
\newtheorem{definition}[thm]{Definition}
\newtheorem{rem}[thm]{Remark}
\newtheorem*{assumptiontrain*}{Assumption [Train]} 
\newtheorem*{assumptiontest*}{Assumption [Test]} 
\newcommand*{\vertbar}{\rule[-1ex]{0.5pt}{3.5ex}}
\newcommand{\bdradd}[1]{#1}
\newcommand{\bdrdel}[1]{}
\newcommand{\bdrdelp}[1]{}
\newcommand{\bdrminor}[1]{}
\newcommand{\vll}[1]{}
\newcommand{\malinas}[1]{}
\newcommand{\exkurt}{\gamma_2} 
\newcommand{\mynone}{n_2}
\newcommand{\pit}{w}
\newcommand{\bW}{\mathbf{W}}
\newcommand{\la}{\left\langle}
\newcommand{\ra}{\right\rangle}
\newcommand{\lv}{\left\Vert}
\newcommand{\rv}{\right\Vert}
\newcommand{\fH}{\mathcal{H}}
\newcommand{\mydot}{\,\cdot\,}
\newcommand{\asp}{\phi}
\newcommand{\nasp}{\phi_n}
\newcommand{\mysig}{\Sigma}
\newcommand{\bfr}{\boldsymbol{\mysig}}
\newcommand{\bfrh}{\hat{\bfr}}
\newcommand{\myh}{h}
\newcommand{\myH}{H}
\newcommand{\bX}{\mathbf{X}}
\newcommand{\bbR}{\mathbb{R}}
\newcommand{\bx}{\mathbf{x}}
\newcommand{\bS}{\mathbf{S}}
\newcommand{\tr}{\mathrm{tr}}
\newcommand{\bu}{\mathbf{u}}
\newcommand{\resolv}{\mathbf{R}}
\newcommand{\buni}{\bu_{ni}}
\newcommand{\lamni}{\lambda_{ni}}
\newcommand{\bz}{\mathbf{z}}
\newcommand{\by}{\mathbf{y}}
\newcommand{\PHT}{SRHT}
\newcommand{\bbE}{\mathbb{E}}
\newcommand{\OPtilde}{O_{\tilde{P}}}
\newcommand{\oas}{o_{P}} 
\newcommand{\oOPof}[1]{O_\prec\left(#1\right)}
\newcommand{\oasneta}{O_\prec\left(\frac{1}{n\eta} \right)}
\newcommand{\myTheta}{\Theta}
\newcommand{\uTheta}{\myTheta_{\infty}}
\newcommand{\aspdom}{(0,1)}
\newcommand{\ox}{\overline{\bx}}
\newcommand{\oxn}{\ox_n}
\newcommand{\sigtil}{\tilde{\Sigma}}
\newcommand{\mythresh}{\tau}
\newcommand{\tps}{'}
\newcommand{\nind}{{ni}}
\newcommand{\lamnind}{\lambda_\nind}
\newcommand{\unind}{\bu_\nind}
\newcommand{\mun}{\mu_n}
\newcommand{\MP}{Mar\v{c}enko--Pastur}
\newcommand{\lp}{Ledoit--P\'{e}ch\'{e}}
\newcommand{\bSn}{\bS_n}
\newcommand{\fSn}{f(\bSn)}
\newcommand{\shrinker}{f_n(\bSn)}
\newcommand{\bXn}{\bX_n}
\newcommand{\bXdmn}{\bX_n}
\newcommand{\bR}{\mathbf{R}}
\newcommand{\uthetaonez}{\myTheta_\infty(z)}
\newcommand{\lwEig}{\tilde{d}_n}
\newcommand{\lwEigX}{\tilde{d}_n(x)}
\newcommand{\lpqfnilong}{\buni\tps\bfrn\buni}
\newcommand{\bfrhalf}{\bfr^{1/2}}
\newcommand{\bfrhalfn}{\bfrhalf_n}
\newcommand{\bmun}{\bmu_n}
\newcommand{\bZ}{\mathbf{Z}}
\newcommand{\bZn}{\bW_n}
\newcommand{\bfrn}{\bfr_n}
\newcommand{\popeig}{\tau}
\newcommand{\popsm}{\pi}
\newcommand{\popsmn}{\popsm_n}
\newcommand{\upopsm}{\popsm_\infty}
\newcommand{\msample}{m}
\newcommand{\msamplen}{\msample_n}
\newcommand{\um}{\msample_\infty}
\newcommand{\brevem}{\breve{\msample}}
\newcommand{\Cplus}{\mathbb{C}^+}
\newcommand{\bxnind}{\bx_{ni}}
\newcommand{\strans}{\mathcal{S}}
\newcommand{\myW}{W}
\newcommand{\myw}{w}
\newcommand{\mywx}{\myw(x)}
\newcommand{\suppumu}{F}
\let\Im\relax
\DeclareMathOperator{\Im}{Im}
\newcommand{\umu}{\mu_\infty}
\newcommand{\LW}{Ledoit--Wolf}
\newcommand{\bI}{\mathbf{I}}
\newcommand{\bRn}{\bR_n}
\newcommand{\bRj}{\bR^{(j)}(z)}
\newcommand{\bRk}{\bR^{(k)}(z)}
\newcommand{\bRjk}{\bR^{(jk)}(z)}
\newcommand{\hatm}{\hat{m}}
\newcommand{\myddiff}{d}
\newcommand{\lpmeas}{\nu}
\newcommand{\lpmeasn}{\lpmeas_n}
\newcommand{\ulpmeas}{\lpmeas_\infty}
\newcommand{\thetaone}{\Theta}
\newcommand{\thetaonenz}{\thetaone_n(z)}
\newcommand{\mywt}{\tilde{\myw}}
\newcommand{\mywtn}{\mywt_n}
\newcommand{\mywtnx}{\mywtn(x)}
\newcommand{\mysmall}{\Delta}
\newcommand{\myfunc}{a}
\newcommand{\myfuncsmall}{\myfunc_{\mysmall}}
\newcommand{\wmu}{\mywt}
\newcommand{\Hwmu}{\fH\wmu}
\newcommand{\Hw}{\fH\myw}
\newcommand{\Hwt}{\fH\mywt}
\newcommand{\Hwtn}{\Hwt_n}
\newcommand{\Hwtnx}{\Hwtn(x)}
\newcommand{\deltil}{\tilde{d}_n}
\newcommand{\wdel}{\myw_\mysmall}
\newcommand{\bc}{\mathbf{c}}
\newcommand{\RMold}[1]{}
\newcommand{\myclx}{v}
\newcommand{\tilhilb}{\mathcal{K}}
\newcommand{\tsig}{\tilde{\sigma}}
\newcommand{\tsign}{\tsig_n}
 \newcommand{\byn}{\by_n}
 \newcommand{\bzn}{\bz_n}
\newcommand{\bmuyn}{\bmu_n}
\newcommand{\mean}{m^o}
\newcommand{\meann}{\mean_n}
\newcommand{\tmean}{\tilde{m}}
\newcommand{\tmeann}{\tmean_n}
\newcommand{\sig}{\sigma}
\newcommand{\sign}{\sig_n^o}
\newcommand{\Zo}{Z^o}
\newcommand{\Zorign}{\Zo_n}
\newcommand{\Zanalytic}{\tilde{Z}}
\newcommand{\Zanalyticn}{\Zanalytic_n}
\newcommand{\hs}{{\text{HS}}}
\newcommand{\mysigma}{\sigma_{\infty}}
\newcommand{\myTsquared}{\tilde{T}^2_n}
\newcommand{\myg}{g}
\newcommand{\myG}{G}
\newcommand{\myGt}{\myG}
\newcommand{\mygtnx}{\myg_{n}(\lambda_{nj})}
\newcommand{\myGtnx}{\overline{\myGt}_n(\lambda_{nj})}
\newcommand{\myhnj}{\overline{h}_n(\lambda_{nj})}
\newcommand{\fU}{\mathcal{U}_\infty}
\newcommand{\fUn}{\mathcal{U}_n}
\newcommand{\omegainfty}{\omega_\infty}
\newcommand{\fopt}{f^*}
\newcommand{\myGamma}{\Gamma}
\newcommand{\myGamman}{\tilde{\myGamma}_n}
\newcommand{\myT}{T}
\newcommand{\myTn}{\tilde{\Gamma}_n}
\newcommand{\fstar}{f_*}
\newcommand{\Tprimeinv}{(\myT \tps)^{-1}}
\newcommand{\magnitude}{\gamma}
\newcommand{\htrans}{\mathcal{H}}
\newcommand{\bsalg}{\textsc{BS96}}
\newcommand{\cqalg}{\textsc{CQ10}}
\newcommand{\lalg}{\textsc{LAPPW20}}
\newcommand{\lwalg}{\textsc{LW22}}
\newcommand{\cwhalg}{\textsc{CWH11}}
\newcommand{\bmu}{\boldsymbol{\mu}}
\newcommand{\myOmega}{\boldsymbol{\Omega}}
\newcommand{\myXi}{\boldsymbol{\Xi}}
\newcommand{\altm}{\underline{m}_\infty}
\newcommand{\altmNaught}{\underline{m}_0}
\newcommand{\Htildew}{\tilde{\fH}_w}
\newcommand{\Hsubw}{\fH_w}
\newcommand{\omegafcn}{h}
\newcommand{\mya}{a}
\newcommand{\blue}[1]{#1}
\begin{document}

\numberwithin{equation}{section}

\begin{frontmatter}
\title{Spectrally robust covariance shrinkage for Hotelling's \(T^2\) in high dimensions}
\runtitle{Robust Shrinkage for Hotelling's \(T^2\)}

\begin{aug}

\runauthor{Robinson and Latimer}



\author[A]{\fnms{Benjamin D.}~\snm{Robinson} \ead[label=e1]{srht.research370@passmail.net}\orcid{0000-0002-9391-45}}
\and
\author[B]{\fnms{Van}~\snm{Latimer}\ead[label=e2]{van.latimer@radialrad.com}\orcid{0000-0001-9789-77}}
\address[A]{Information and Networks Division, Air Force Office of Scientific Research \printead[presep={ ,\ }]{e1}}

\address[B]{
Radial Research and Development, Inc. \printead[presep={,\ }]{e2}}

\end{aug}

\begin{abstract}
We investigate covariance shrinkage for Hotelling's $T^2$ in the regime where the data dimension $p$ and the sample size $n$ grow in a fixed ratio---without assuming that the population covariance matrix is spiked or well-conditioned.  
When $p/n\to\phi \in (0,1)$, we propose a practical finite-sample shrinker that, for a maximum-entropy signal prior and any fixed significance level,  (a) asymptotically optimizes power under Gaussian data, and (b) asymptotically saturates the Hanson--Wright lower bound on power in the more general sub-Gaussian case.  
Our approach is to formulate and solve a variational problem characterizing the optimal limiting shrinker, and to show that our finite-sample method consistently approximates this limit via extensions of recent local random matrix laws.
Empirical studies on simulated and real-world data, including the \textsc{Crawdad} UMich/RSS data set, demonstrate up to a $50\%$ gain in power over leading linear and nonlinear competitors at a significance level of $10^{-4}$.
\end{abstract}

\begin{keyword}[class=MSC]
\kwd[Primary ]{62H15}
\kwd{60B20}
\end{keyword}

\begin{keyword}
\kwd{anomaly detection}
\kwd{nonlinear covariance shrinkage}
\kwd{high-dimensional statistics}
\kwd{universal random matrix theory}
\end{keyword}

\received{June 2025}
\revised{July 2026}

\end{frontmatter}

\maketitle

\section{Introduction}

Comparing the means of two samples with a shared covariance matrix is a fundamental problem in multivariate statistics, and Hotelling's \(T^2\) test is a popular procedure when the sample size \(n\) dominates the dimension \(p\) \cite{anderson1963asymptotic,muirhead2009aspects}.  
However, in the more modern high-dimensional regime of \cite{marvcenko1967distribution} where \(n\) scales linearly with \(p\), the test is inconsistent even when well-defined \cite{bai1996effect}.  
Several authors have proposed addressing this issue by shrinking the sample covariance matrix \cite{kai2015high,li2020adaptable,robinson2022improvement}: applying a linear or even nonlinear function---called a \textit{shrinker}---to the eigenvalues while keeping the eigenvectors fixed \cite{stein1975estimation,ledoit2004well,mestre2005finite,chen2010shrinkage,ledoit2020analytical}.  
While such methods are effective under strong assumptions on model order or condition number, they can fall far short of optimality when many strongly correlated nuisance signals are present \cite{robinson2022improvement}, a common scenario in finance, genomics, and other applications \cite{bergin2002high,johnstone2009consistency,ledoit2017nonlinear}.
This limitation motivates the development of covariance shrinkage techniques that remain robust across a significantly broader range of spectral distributions in high dimensions.

Covariance shrinkage under general spectral conditions is challenging for several reasons.  
First, linear shrinkers \cite{tikhonov1943stability,hoerl1970ridge,haff1980empirical,carlson1988covariance,mestre2005finite,chen2010shrinkage,couillet2015robust,bodnar2016direct} often fail to capture the complexity of a general population spectrum since they have at most two degrees of freedom.
Previous work has indeed shown that in many contexts optimal  shrinkers belong to the much richer \textit{nonlinear} class, which consists of functions with \(k\) continuous derivatives for some \(k\) and thus has infinitely many degrees of freedom \cite{stein1975estimation,dey1985estimation,nadakuditi2014optshrink,donoho2018optimal,ledoit2018optimal}.  Second, implementing optimal nonlinear shrinkers can be difficult: the first practical versions, devised by Ledoit and Wolf, are still relatively new \cite{ledoit2018optimal,ledoit2020analytical,ledoit2022quadratic}.
Third, deriving the asymptotic null distribution of the shrinkage-modified Hotelling \(T^2\) statistic is delicate---even in the case of linear shrinkage \cite{pan2011central,li2020adaptable}.  
Finally, identifying and optimizing an effective detection criterion for the test under fully general spectral conditions remains an open problem \cite{namdari2021high}.

In this paper, we develop a method of covariance shrinkage for Hotelling's \(T^2\) that is robust to large condition numbers, high model orders, and deviation from Gaussianity.   
Building on the anisotropic local-law framework of Knowles and Yin \cite{knowles2017anisotropic}, we adopt a random matrix model that accommodates many families of matrices with such spectral features.
This model also accommodates Johnstone's spiked model \cite{johnstone2001distribution} and many of the generalized spiked models in \cite{bai2012sample} under finite-rank perturbations, which recent literature suggests leave the relevant asymptotic limits unaffected in analogous settings \cite{ding2024eigenvector}.
Moreover, our method is robust to sub-Gaussian departures from the Gaussian model, as long as the data have independent components in a fixed basis. 

 In parallel with various recent works on eigenvector overlaps \cite{ding2024eigenvector,lin2026eigenvector}, we first establish a stochastic convergence rate associated with Ledoit and Wolf's nonlinear shrinkage eigenvalues from \cite{ledoit2020analytical} in Theorem~\ref{thm:local-lw}---rates that provide the foundation for our remaining results. Focusing on the regime of \(p\to\infty\) with \(p/n\to\asp\in (0,1)\), our main contributions to statistical theory are as follows: 
\begin{enumerate} 
    \item In Theorem~\ref{thm:significance}, we establish asymptotic control of size for the Hotelling \(T^2\) test modified with covariance shrinkage.
    \item In Theorem~\ref{thm:deterministic-limit}, (a) we present a variational problem characterizing the shrinker that asymptotically optimizes a sub-Gaussian lower bound on power, 
    under a maximum-entropy signal-prior assumption [\textsc{Test}]; and (b) we express this optimal shrinker in terms of Hilbert transforms.
    \item In Theorem~\ref{thm:approx-fopt}, we present a practical finite-sample approximation that achieves the same asymptotic detection performance as the optimal shrinker.
\end{enumerate}
Taken together, these results give the first usable shrinker that asymptotically optimizes Hotelling's \(T^2\) test---up to sub-Gaussian concentration bounds---across a wide range of spectral complexity.

We present these results and their preliminaries in the following sections.  Section~\ref{sec:background} defines the shrinkage-modified Hotelling \(T^2\) statistic and provides a preview of the proposed shrinker.  Section~\ref{sec:RMT} introduces our sub-Gaussian data model and reviews the relevant local laws from universal random matrix theory.  In Section~\ref{sec:nonlinear-shrinkage}, we develop the nonlinear shrinkage framework and present our convergence-rate analysis of Ledoit and Wolf's shrinkage eigenvalues. Section~\ref{sec:EDs} finishes introducing our hypothesis-testing model and contains our three main theoretical results. 
Finally, Section~\ref{sec:empirics}  presents empirical results on simulated and real-world data, including the \textsc{Crawdad} UMich/RSS data set \cite{c7r30h-22}, 
and demonstrates up to a $50\%$ gain in power over leading competitors at a significance level of \(10^{-4}\).  Additional technical details, simulations, and algorithms can be found in the Supplementary Material following this paper's main body. Code for this paper's simulations can be found at \url{https://github.com/bnrbnsn/SRHT}.

\section{Problem and preview of proposed shrinker} \label{sec:background}
\blue{Anomaly detection} in this paper, as addressed by Hotelling's \(T^2\), will refer to the problem of \blue{determining whether the unknown mean of a vector observation is equivalent to the unknown mean} of a reference data set having the same covariance matrix.
Formulated as a hypothesis test, the question is, given i.i.d$.$ random vectors 
 \(\bx_1, \bx_2, \dots, \bx_{n} \sim P\) and a random vector \(\by \sim Q\) with \(\operatorname{cov}(\bx_1) = \operatorname{cov}(\by)\), to decide between \(\fH_0: \bbE[\bx_1]=\bbE[\by]\) and \(\fH_1: \bbE[\bx_1]\ne\bbE[\by]\).

\blue{ 
Although in the traditional setting two-sample testing requires that multiple i.i.d$.$ copies of both $\bx_1$ and $\by$ are available, we focus primarily on the challenging regime of one ``test'' sample $\by$.  This setting is of fundamental importance in statistical signal processing applications---both in online and offline analysis.
Some examples include distributed motion detection in the presence of complex noise and interference \cite{chen2011robust}, data censoring in radar space-time adaptive processing \cite{chen1999screening, adve1999transform}, and out-of-distribution detection for deep neural classifiers \cite{lee2018simple}.  In these settings, detection-delay costs or nonstationarity constraints may mean that it is often not sensible to accumulate test data beyond a singleton before a decision is made.
}

When the common covariance matrix \(\bfr\) is known, a typical test statistic for \blue{anomaly} detection is the quadratic-form detector given by
\begin{equation} \label{eq:mahalanobis}
(\by-\ox)\tps\bfr^{-1}(\by-\ox) \underset{\fH_0}{\overset{\fH_1}{\gtrless}} \mythresh,
\end{equation}
where \(\ox\) is  the sample mean of \(\{\bx_i\}\), \(\mythresh\) is some detection threshold, and \((\,\cdot\,)\tps\) denotes the transpose operation.  
However, unless \(n\) is large---\textit{i.e.}, much larger than \(p\)---it is not realistic to assume that one can approximate all the \(p^2\) entries of \(\bfr\) with much accuracy.  In this case,
it is standard to replace \(\bfr\) by some estimator \(\bfrh\).
The focus of this paper is on
how to choose \(\bfrh\) in the \textit{high-dimensional regime,} where \(n\) and  \(p\) go to infinity and \(p/n \to \asp\in(0,1)\).

Very often \(\bfrh\) is chosen to be the Bessel-corrected sample covariance matrix of \(\{\bx_i\}\), defined to be
\begin{equation} \label{eq:scm}
\bSn := \frac{1}{n-1}\sum_{i=1}^n (\bx_i-\ox)(\bx_i-\ox)\tps = \frac{1}{n-1}\bXdmn\left(\mathbf{I}_n-\frac{1}{n}\mathbf{1}_n \mathbf{1}_n'\right)\bXdmn\tps,
\end{equation}
where \(\bXdmn\) is the \(p\times n\) matrix 
\([\bx_1, \bx_2, \dots, \bx_n ]\), \(\mathbf{I}_n\) is the \(n\times n\) identity matrix, and \(\mathbf{1}_n\) is the \(n\times 1\) vector \((1,1,\dots, 1)'\).
The resulting plug-in quadratic-form detector can be identified with Hotelling's \(T^2\) statistic \cite{muirhead2009aspects} for large \(n\).  However, although this choice minimizes asymptotic detection loss compared to \eqref{eq:mahalanobis} when \(n\gg p\), there is no reason to believe it has the same property in the  high-dimensional regime.
Indeed, Hotelling's \(T^2\) is inconsistent in this regime \cite{bai1996effect}.

A natural alternative approach is to replace the sample precision matrix \(\bSn^{-1}\), with eigen-decomposition \(\sum_{i=1}^p \lambda_i^{-1}\bu_i\bu_i\tps\), by a \textit{shrinkage estimator} \(\fSn\) with eigen-decomposition \( \sum_{i=1}^p f(\lambda_i)\bu_i\bu_i\tps\), where \(f\) is some non-negative deterministic function we call the \textit{shrinker}.  
More in line with the literature, we overload the term shrinker to mean not only deterministic shrinkers but also more general functions \(f_n\) whose values could depend on random \(n\)-dependent quantities, such as the full spectrum of \(\bSn\) \cite{stein1975estimation}.  
In any case, we call the resulting plug-in test statistic a \textit{shrinkage-regularized Hotelling \(T^2\) statistic}, or \textit{\PHT{}}, defined by
\begin{equation} \label{eq:eed}
\myTsquared \equiv \myTsquared(f_n) := (\byn-\oxn)\tps \shrinker(\byn-\oxn).
\end{equation}

The most common type of shrinker  \(f_n\) corresponds to \textit{linear} shrinkage and takes the form \(f_n(x)=1/(a_n x+b_n)\) for some scalars \(a_n, b_n > 0\).  
This choice corresponds to the ridge-regularized Hotelling \(T^2\) test statistics of \cite{li2020adaptable}.
Linear shrinkage is extremely well-studied, but for all the effort devoted to it, virtually all procedures for finding \(a_n\) and \(b_n\)  appear to depend on prior knowledge, such as a tight bound on the minimum eigenvalue of \(\bfr\) or on its condition number or model order, which may not be practical to obtain.
We thus assume \(f_n\) is not necessarily of the linear class, and belongs to a more general class of \textit{nonlinear} shrinkers.

In the related contexts of financial portfolio optimization and radar space-time adaptive processing, a common optimal nonlinear shrinkage estimator has been identified in \cite{ledoit2020analytical} and \cite{robinson2021space}, respectively, under certain signal-prior assumptions and the high-dimensional regime in question. Under certain additional conditions relating to the covariance spectrum  and moments of the data, one choice for this shrinkage procedure replaces \(\bfrh\) with \(\delta(\bSn)\), where 
\begin{equation} \label{eq:delta}
\delta(x) := \frac{x}{[1-\asp  - \pi\asp x\Hw(x)]^2 + \pi^2 \asp^2 x^2 w(x)^2}, \qquad (x\in \bbR)
\end{equation}
  \(w(x)\) is the \MP{} density \cite{marvcenko1967distribution}, and \(\Hw(x)\) is the Hilbert transform of \(w(x)\), given by the principal-value integral
\begin{equation} \label{eq:hilb-def}
\fH f(x) \equiv \fH[f](x)   :=  \mathrm{p.v.} \frac{1}{\pi} \int \frac{f(t)}{t-x}\, dt. \qquad (f \in L^2(\bbR))
\end{equation}
We note that the function \(\delta(x)\) is a key mapping appearing throughout this paper that can be approximated for finite sample sizes using
Theorem~\ref{thm:lw}.

In this paper we present an asymptotically optimal choice of the function \(f_n\) appearing in the statistic \(\tilde{T}^2_n\) of \eqref{eq:eed} for a sub-Gaussian linear model given by assumptions [\textsc{Train}] and [\textsc{Test}], which will be presented in the forthcoming sections.  To make the problem well-posed, we assume a signal prior of the form \(\mathcal{N}(0, \myOmega)\) and that the quantities \(\bu_{i}\tps \myOmega\bu_{i}w(\lambda_i)\) approximate \(h_n(\lambda_i)\) for some sufficiently regular function \(\myh\).  Letting \(a(x) = x w(x)\delta(x)\), an optimal shrinker is the function
\[
f^* = \frac{g^2 \myh + g G \myH}{a} - \mathcal{H}\left[ \frac{G^2 \myH + Gg \myh}{a}\right],
\]
where 
\begin{equation} \label{eq:gG}
g(x) := 1-\asp - \asp \pi x\Hw(x)\qquad\text{and}\qquad G(x):= -\asp \pi x w(x)
\end{equation}
and \(\myH := \mathcal{H}h\).  The shrinker \(f^*\) can be obtained by solving a singular integral equation arising from the variational problem that appears in Theorem~\ref{thm:deterministic-limit}.  Under the model to be presented, this shrinker asymptotically optimizes power for any chosen significance level if the data happen to be Gaussian, and  otherwise asymptotically saturates the Hanson--Wright lower bounds on power at any chosen significance level if the data have sub-Gaussian components. 
Since \(f^*\) is generally unobservable, a suitable finite-sample approximation  \(f_n\) to \(f^*\) is given in Theorem~\ref{thm:approx-fopt}.

In what follows, we will ground these ideas in a firm theoretical foundation.  To do this, our first task will be to summarize and develop key results in random matrix and nonlinear shrinkage theory over the next two sections. 

\begin{rem}
 It is worth noting before proceeding that Hotelling's \(T^2\)
is not always used to compare a singleton sample to another sample, but is frequently used to compare two samples of cardinality greater than one.
We expect our results to hold in this case as well, but some additional technical arguments are required in the non-Gaussian case to handle dependencies between sample means and covariance estimators.  For ease of exposition, we omit the details of this extension except to note that the analysis of \cite[Section~4]{li2020adaptable} appears to provide a blueprint.
\end{rem}

\section{Background on random matrix theory} \label{sec:RMT}

The starting point of nonlinear shrinkage theory is the celebrated \MP{} Theorem \cite{marvcenko1967distribution,silverstein1995strong}, which is foundational in high-dimensional inference.  In this section, we will state this theorem in one of its modern forms.   

  We emphasize that the samples \(\bxnind\) and sample means \(\oxn\), both of increasing dimension, depend on \(n\) (and \(p\)) by denoting them as \(\bxnind\) and \(\oxn\).  We write the sample matrix again in block form as \(\bXn = [\bx_{n1}, \bx_{n2}, \dots, \bx_{nn}]\), which can be visualized as in the following diagram:
\def\tmpA{
\begin{bmatrix}
\vertbar & \vertbar & ~ & \vertbar \\
\bx_{n1} & \bx_{n2} & \cdots & \bx_{nn} \\
\vertbar & \vertbar & ~ & \vertbar
\end{bmatrix}
}
\def\tmpB{
\begin{bmatrix}
\vertbar & \vertbar & ~ & \vertbar \\
\oxn & \oxn & \cdots & \oxn \\
\vertbar & \vertbar & ~ & \vertbar
\end{bmatrix}
}
\[ 
\stackMath\def\stackalignment{r}%
  \stackon{%
    \ensuremath{\normalsize p} \left\{\tmpA \right.%
  }{%
    \overbrace{\phantom{\smash{\tmpA\mkern -36mu}}}^{\scriptstyle \ensuremath{n}} \mkern 25mu%
  }
\quad
\]
Given that \(\bSn = (n-1)^{-1}\bXn(\mathbf{I}_n-n^{-1}\mathbf{1}_n\mathbf{1}_n\tps)\bXn\tps\), as before, the norm difference of \(\bSn\) and \(\tilde{\bS}_n = n^{-1} \bXn\bXn\tps\) goes to zero when the population has mean zero. As a result of this and shift invariance of Hotelling's \(T^2\), we will frequently assume the mean vanishes and identify \(\tilde{\bS}_n\) with \(\bSn\), commenting on any complications that arise in the case of nonzero mean as necessary.

Except when otherwise noted (\textit{e.g.}, in the nonzero-mean case), we will assume throughout the rest of the paper the following data model for the \(\bxnind\)'s.

\begin{assumptiontrain*} 
\leavevmode
\begin{itemize}
    \item {[\textsc{Train1}]} \(\bXn\) 
can be expressed as \( \bfrhalfn \bZn\),  
 where \(\bZn\) is a matrix of i.i.d$.$ random variables \(\sim W\) with zero mean, unit variance, and sub-Gaussian tails: 
 \[
 \bbE[e^{sW}]\le e^{s^2/2} \qquad \text{for all real \(s\).}
 \]
     \item {[\textsc{Train2}]} \(\bfrn\) is a deterministic symmetric positive-definite \(p\times p\) matrix with eigenvalues \( \popeig_{n1}, \popeig_{n2}, \dots \popeig_{np}\).
    \item {[\textsc{Train3}]} There is \(\asp \in \aspdom \) such that \(|p/n - \asp| = O(1/p)\) as \(n, p\to\infty\).
    \item {[\textsc{Train4}]} As \(n,p\to\infty\), (a) defining \(\delta_\tau\) to be the Dirac mass at \(\tau\), the population spectral distribution \(\popsmn = p^{-1}\sum_{i=1}^p \delta_{\popeig_{ni}}\) of \(\bfrn\) differs in 1-Wasserstein distance by \(O(1/p)\) from some limiting distribution
    $\upopsm$ with compact positive support, 
and (b) all population eigenvalues eventually lie in a positive compact interval.
    \item {[\textsc{Train5}]} The sequence
    $\{\pi_n\}$ is \textit{regular} in the sense of \cite[Definition~2.7]{knowles2017anisotropic}---for example, when $\pi_\infty$ has the property that \(d\upopsm(x)/dx\) is bounded above and below and has a positive compact interval for support.
    \end{itemize}
\end{assumptiontrain*}

The conditions [\textsc{Train}] strengthen somewhat but align closely with the assumptions in the main body of \cite{ledoit2020analytical}.  Although many of these conditions can be relaxed, doing so would substantially complicate the proofs of our main theoretical results. Nevertheless, we briefly mention some significant potential relaxations. 
\begin{itemize}
\item The moment condition  in {[\textsc{Train1}]}, also known as \textit{one-sub-Gaussianity}, can be relaxed to a sub-Weibull-type condition without much conceptual difficulty, but the proofs become messier notationally.
\item  The assumption of regularity in {[\textsc{Train5}]} appears in a wide range of practical models, includes the identity matrix as a special case \cite[Example~2.8]{knowles2017anisotropic}, and ensures the highly desirable property of square-root edge-behavior of the \MP{} density, but regularity technically excludes spiked covariances.  However, it turns out we lose no generality by relaxing {[\textsc{Train5}]} to allow finite-rank perturbations of \(\bfrn\)---including spiked and many generalized spiked models.  See \cite[Section~2.1]{ding2024eigenvector} and subsequent sections for the details of this extension, which we omit here.  
\item A third relaxation involves the assumption in {[\textsc{Train3}]} that \(\asp \in (0,1)\).  We discuss the delicate ``singular'' case of $\asp \in (1,\infty)$ further in Remark~\ref{rem:sample-starved} 
and Section~S.10
of the Supplemental Material.
\item We make the convergence rate assumption [\textsc{Train3}] explicit for convenience, but from a practical point of view, it is not essential.  The reason: in practice, the data matrix $\bXn$ is only given for a single $n$.  In other words, the limiting quantity $\asp$ is not observed, and can be taken to approximate $p/n$ as closely as desired, or even equal it. Similarly, the convergence rate in [\textsc{Train4}] may seem restrictive, but for the same reason, does not impose any practical difficulty. 
To expand, since only one sample is given, one may simply redefine $\upopsm$ to be $\pi_n$ for sufficiently large $n$, after which the convergence rate in 1-Wasserstein distance is arbitrarily fast.  The value of these formulations is that (a) having an infinite-dimensional limit enables the use of a large body of existing random matrix theory, and (b) having an order of convergence enables downstream convergence-rate guarantees to be made.
\end{itemize}

We note that the assumption {[\textsc{Train3}]} means that we will be able to use the condition ``as \(n\to\infty\)'' as a substitute for the equivalent but more verbose condition ``\(n,p\to\infty\) and 
\(p/n = \asp + O(1/p)\).''  
Even more briefly, when we express a convergence relation, such as  \(\left\Vert \bfrn - \bI_p\right\Vert \to 0\) without identifying a limiting regime, it will be understood that this convergence occurs as \(n\) (and thus \(p\)) goes to infinity.
\blue{ 
The only exception will be inside theorem environments, so that the statements of main results remain as self-contained as possible.
}

Our primary notion of convergence rate in probability will be that of \textit{stochastic domination}, defined as follows as in  \cite{knowles2017anisotropic} :
\begin{definition}
    Let \(X_n(u)\) and \(Y_n(u)\)  be random variables,  parameterized by a family of real vectors \(u \in U\). We say \(X_n\) is stochastically dominated by \(Y_n\)  uniformly in \(u\)  if for all \(\epsilon, D > 0\) we have
    \[
    \sup_{u\in U} \Pr\left[ X_n(u) < n^\epsilon Y_n(u) \right] \le n^{-D}
    \]
    for all \(n\) greater than some \( n_0(\epsilon, D)\).  More briefly, we write \(|X_n|\prec Y_n\) or \(X_n = O_\prec(Y_n)\). More generally, if for every \(D>0\) there is \(n\) such that a property holds with probability at least \(1-n^{-D}\) for all larger \(n\), we say the property holds \textit{with high probability}.  
    
    Quantities converging to probability in the usual sense will often be denoted by  \(o_P(1)\).
\end{definition}
\begin{rem}
    The underlying set \(U\) parameterizing random variables of interest will often not be explicitly identified; instead the notation \(O_\prec(\mydot)\) will be implicitly taken to mean uniform stochastic domination over all parameters (\textit{e.g.}, matrix indices or complex spectral parameters) not expressly deemed constant.
\end{rem}

We note that the sample covariance matrix can be expressed just as before, except with terms indexed by \(n\) to indicate their increasing dimension and dependence on \(n\):
\[\bSn = (n-1)^{-1}\sum_{i=1}^n (\bxnind - \ox_n)(\bxnind - \ox_n)\tps.
\]
Further, we will frequently make use of the sample eigen-decomposition:
\begin{equation*}
    \bSn = \sum_{i=1}^p \lamni \buni\buni\tps,
\end{equation*}
where the unit eigenvectors \(\buni\) are almost surely uniquely defined up to a sign since the probability of repeated sample eigenvalues is zero. 

The \MP{} Theorem is essentially a statement about the limiting behavior of the resolvent \(\bR_n(z)\) of \(\bSn\), given by 
\begin{equation*}
    \bR_n(z) = (\bSn-z\bI_p)^{-1},
\end{equation*}
and the spectral measure of \(\bSn\): namely,
\begin{equation*}
    \mun = p^{-1}\sum_{i=1}^p \delta_{\lamni}. 
\end{equation*}

\vspace{1em} 

A strong form of the \MP{} Theorem due to Knowles and Yin is reproduced below.
\begin{thm}[Local \MP{} Laws \cite{knowles2017anisotropic}] \label{thm:local-mp}
    Assume \emph{[\textsc{Train}]} and that \(\mu_n\) is the spectral distribution of the sample covariance matrix \(\bSn\).  Let 
    \[
    \msamplen(z) := p^{-1}\tr \bR_n(z) .
    \]
    Then for all bounded \(z = x+i\eta\) in the complex upper half plane \(\Cplus\) with \(\eta \ge n^{-1+\epsilon}\) for some positive \(\epsilon\), 
    we have 
    \begin{equation} \label{eq:avgd-local-mp}
    \msamplen(z) = \um(z) + \oasneta 
    \end{equation}
    as \(n,p\to\infty\),
    where \(\um(z)\) is the unique solution to
    \begin{equation} \label{eq:for-umu}
    \um(z) = \int_0^\infty \frac{d\upopsm(\popeig)}{\popeig(1-\asp-\asp z \um(z))-z}.
    \end{equation}
    Furthermore, (the ``small-scale'' laws) there is a nonrandom measure \(\umu\) such that
    \begin{equation} \label{eq:small-scale-mp}
       \sup_{(a,b)\subset \mathbb{R}} \left|\mun(a,b) - \umu(a,b)\right| \prec \frac{1}{n}
    \end{equation}
    and, uniformly in \(f\in C^2\),
    \begin{equation}
        \int f\, d\mu_n = \int f\, d\umu + O_\prec\left(\frac{\lv f\rv_1}{n} + \frac{\lv f'\rv_1}{n} + \frac{\lv f''\rv_1}{n^2} \right),
    \end{equation}
    as \(n,p\to\infty\).
\end{thm}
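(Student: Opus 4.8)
I would treat Theorem~\ref{thm:local-mp} as a specialization of the anisotropic local law of Knowles and Yin to the separable sample-covariance model $\bXn = \bfrhalfn\bZn$, and sketch the self-consistent-equation argument that delivers it under \Hone{}--\Hfiv{}. Throughout one works with the resolvent $\bR_n(z) = (\bSn - z\bI_p)^{-1}$ and, after the mean-zero reduction justified in the text, writes $\bSn = \bfrhalfn\bigl(n^{-1}\bZn\bZn^*\bigr)\bfrhalfn$. The first step is to derive a self-consistent equation for $\msamplen(z) = p^{-1}\tr\bR_n(z)$: applying Schur-complement and minor-removal identities expresses the diagonal entries of $\bR_n$ through quadratic forms $\bz_i^*\bR_n^{(i)}\bz_i$ in the columns of $\bZn$, where $\bR_n^{(i)}$ is the resolvent with the $i$-th column deleted. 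The one-subgaussian hypothesis \Hone{} supplies the large-deviation bound $\bz_i^*\bA\bz_i = n^{-1}\tr\bA + O_\prec\!\bigl(\|\bA\|_{\mathrm{HS}}/n\bigr)$ uniformly over deterministic (or suitably independent) $\bA$; substituting and comparing with the Marčenko--Pastur ansatz $\bR_n(z) \approx -z^{-1}\bigl(\bI_p + \um(z)\bfrn\bigr)^{-1}$ shows that $\msamplen(z)$ satisfies, up to a self-improving residual, the equation whose limit is \eqref{eq:for-umu}.

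\textbf{Averaged law.}
Next I would establish that \eqref{eq:for-umu} has a unique solution $\um(z)\in\Cplus$ for $z\in\Cplus$ (standard Marčenko--Pastur existence/uniqueness, or a contraction argument in the hyperbolic metric) and a stability estimate for the equation: a near-solution is close to the true one, with an error governed by the residual and the derivative of the defining map. Here hypothesis \Hfiv{} does essential work---since $\upopsm$ has a bounded, positive density on a compact interval bounded away from $0$, the edges of $\supp\umu$ are ``hard'' (no square-root vanishing), so the stability constant stays bounded on all of $\{z : \myIm z > 0,\ |z|\le C\}$ and the edge analysis is simpler than in the generic case. Combining the self-consistent equation with the fluctuation-averaging lemma---which upgrades the $n^{-1/2}$-type fluctuations of individual quadratic forms to an $n^{-1}$-type bound after averaging over indices---and bootstrapping from scale $\eta\sim 1$ down to $\eta\gg n^{-1}$ yields \eqref{eq:avgd-local-mp} with error $O_\prec(1/(n\eta))$, uniformly in bounded $z\in\Cplus$.

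\textbf{Small-scale laws.}
To descend from the resolvent law to the counting-function and linear-statistics statements I would use the Helffer--Sjöstrand formula. For \eqref{eq:small-scale-mp}, smooth the indicator of a test interval $(a,b)$ at scale $\eta_0\sim n^{-1}$, integrate the averaged law, and note that by \Hfiv{} each interval of length $\delta$ carries $\sim p\delta$ eigenvalues, so the choice $\eta_0\sim n^{-1}$ produces $\sup_{(a,b)}|\mun(a,b)-\umu(a,b)|\prec 1/n$ after a dyadic union bound over the endpoints. For the $C^2$ bound, Helffer--Sjöstrand writes $\int f\,d\mun - \int f\,d\umu$ as an integral of $(\partial_{\bar z}\tilde f)\,(\msamplen - \um)$ over an almost-analytic extension $\tilde f$ of $f$; splitting the contour into $\eta\gtrsim 1$ and $n^{-1}\lesssim\eta\lesssim 1$ and inserting \eqref{eq:avgd-local-mp} together with the elementary pointwise bounds on $\partial_{\bar z}\tilde f$ in terms of $f''$ yields the error $O_\prec\!\bigl(\|f\|_1/n + \|f'\|_1/n + \|f''\|/n^2\bigr)$.

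\textbf{Main obstacle.}
The technical heart is the averaged local law down to the optimal scale $\eta\gg n^{-1}$, uniformly in $z$: this needs the full fluctuation-averaging machinery and the self-consistent bootstrap, and one must check that the stability of \eqref{eq:for-umu} does not degenerate near the spectral edges. Fortunately, \Hfiv{} is exactly what tames the edge behaviour, so the result is a genuine specialization---rather than an extension---of Knowles and Yin's work; the remaining effort is bookkeeping to extract the precise error terms and uniformity asserted in the statement.
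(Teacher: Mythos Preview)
Your overall strategy matches the paper's: the averaged law \eqref{eq:avgd-local-mp} is taken directly from Knowles--Yin's fluctuation-averaging argument, and the small-scale laws are extracted from it via the Helffer--Sj\"ostrand formula. The paper in fact reverses your order---it proves the $C^2$ linear-statistics bound first and then obtains the interval law \eqref{eq:small-scale-mp} as a corollary by sandwiching $\mathbf{1}_{(a,b)}$ between two $C^2$ mollifications at scale $n^{\epsilon-1}$---which saves you the dyadic union bound over endpoints.

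One factual slip worth correcting: you claim that \Hfiv{} makes the edges of $\supp\umu$ ``hard'' (no square-root vanishing). This is false. Even when $\upopsm$ has a bounded positive density on a compact interval, the limiting \emph{sample} spectral density $w(x)$ still vanishes like $\kappa(x)^{1/2}$ at the boundary of $\suppumu$---the paper states this explicitly, citing \cite{knowles2017anisotropic}, and the appendices use it repeatedly. The role of \Hfiv{} is not to eliminate square-root edges but to place the model inside the ``regular'' class of Knowles--Yin (their Definition~2.7), for which the self-consistent equation is uniformly stable and the local law holds down to the optimal scale. Since you are citing their result rather than reproving it, this misconception does not break your argument, but your stated reason for why the edge analysis is ``simpler'' is incorrect and should be removed.
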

\begin{proof}[Proof Sketch]
    The statement spanning \eqref{eq:avgd-local-mp} and \eqref{eq:for-umu} follows directly from the cited work.
    The small-scale laws then follow from well-established arguments by Helffer and Sj\"{o}strand,
   which are reproduced
    in Section~S.2
    of the Supplementary Material for convenience. 
\end{proof}

\begin{rem}
 Pursuant to the comments following \cite[Equation~2.9]{knowles2017anisotropic}, we note that the convergence rates in Theorem~\ref{thm:local-mp} are not affected by the condition number of \(\bfrn\).  
\end{rem}

The first known version of \eqref{eq:avgd-local-mp} and \eqref{eq:small-scale-mp} in the literature is due to \cite{marvcenko1967distribution} but does not include an explicit almost-sure convergence rate or uniformity over intervals \((a,b)\) and pertains to the case of diagonal \(\bfrn\).  For example, when \(\bfrn\) is the \(p\times p\) identity matrix for all \(n\), it is well-known that the density \(d\umu(x)/dx\) can  be expressed in closed form as 
\[
 \frac{1}{2\pi \asp x}\sqrt{(x-(1-\sqrt{\asp})^2)((1+\sqrt{\asp})^2-x
)}.
\]
   Numerical evidence of the small-scale law \eqref{eq:small-scale-mp} for the case of identity covariance can be found in
 Figure~\ref{fig:mp}.
    In another major development, \cite{silverstein1995strong} extended the work of \cite{marvcenko1967distribution} to the case of matrices \(\bfrn\) that are not necessarily diagonal. 
    However, again no convergence rate is provided. Theorem~\ref{thm:local-mp}, by contrast, provides a convergence rate that is essentially sharp, \textit{i.e.}, up to factors of \(n^\epsilon\).
    
\begin{definition} \label{def:MP-defs}
Regarding notation, throughout this paper we will let \(\suppumu:=\operatorname{supp}\umu \), let  \(w(x) := d\umu(x)/dx\), and let \(\kappa(x)\) be defined as the distance from \(x\in \bbR\) to  the boundary of \( F\).   We will also say that if \(a_n\) and \(b_n\) are sequences of complex numbers, \(a_n \lessapprox  b_n\) will mean \(a_n = O(b_n)\), and \(a_n \asymp b_n\) will mean both \(a_n \lessapprox b_n\) and \(b_n \lessapprox a_n\).
Tables~\ref{tab:RMT} and \ref{tab:RMT2} list the random matrix notation appearing throughout this paper for convenience.
\end{definition}

\vspace{1em}

Some important facts about these quantities are that 
 \(\suppumu\) is the union of finitely many disjoint closed intervals of nonzero length \cite{bai1998no}, and that \(w(x)\)
 is a smooth function except near the edges of these intervals, where \(\mywx\asymp\kappa(x)^{1/2}\) for \(x\in F\) \cite{knowles2017anisotropic}.

Define the Stieltjes transform of a positive measure \(\rho\)  by \(\strans[d\rho](z):= \int (t-z)^{-1}\, d\rho(t)\), so that \(\msamplen(z) = \strans[d\mun](z)\). 
It turns out that \(\umu\) is defined by the relation \(\um(z) = \mathcal{S}[d\umu](z)\) for \(z \in \Cplus\), and that extending \(\um(z)\) and related Stieltjes transforms to the real line will be of interest.
 In \cite{silverstein1995analysis}, it is shown that for \(x \in \bbR\) the following limit exits
\[	
\lim_{z\in \Cplus \to x} \um(z) =: \brevem(x),
\]
and, further \(\Im[\brevem(x)] = \pi w(x)\). 
Another fact is that the real part of \(\brevem(x)\) is given by \( \pi \Hw(x)\) \cite[Supplement~A.1]{ledoit2020analytical}.
Combining these two results about \(\brevem(x)\), we get
\begin{equation} \label{eq:brevem}
\brevem(x) = \pi \Hw(x) + i\pi w(x). \qquad(x\in \bbR)
\end{equation}
It follows in much the same way that if \(f \in C(F)\), we may write
\begin{equation} \label{eq:utheta}
    \lim_{z\in \Cplus\to x} \strans[f\, d\umu](z) = \pi\htrans[f w](x) + i\pi f(x)w(x)
\end{equation}
for all \(x\in \bbR\).  

\vspace{.25cm}
\begin{minipage}{\linewidth}
\centering
\setlength{\tabcolsep}{10pt}
\renewcommand{\arraystretch}{1.1}
\captionof{table}{Key random matrix symbols\label{tab:RMT}}
\begin{tabularx}{\linewidth}{>{\raggedright\arraybackslash}p{0.45\linewidth}X}
\toprule
\textbf{Symbol} & \textbf{Meaning / definition} \\
\midrule
$\asp$ & $\lim_n \frac{p}{n} \in (0,1)$ \\
$\bX_n=[\bx_{n1},\dots,\bx_{nn}]$ & Real $p\times n$ data matrix ($\bx_{ni}$ i.i.d.) \\
$\ox_n = \frac{1}{n}\sum_{i=1}^n \bx_{ni}$ & Sample mean \\
$(\mydot)\tps$, $(\mydot)^*$ &  Matrix transpose/conjugate-transpose\\
$\bS_n=\tfrac1{n-1}\sum_{i=1}^{n}(\bx_{ni}-\bar{\bx}_n)(\bx_{ni}-\bar{\bx}_n)\tps$ & Sample covariance matrix \\
$\delta_\lambda$ & Dirac mass at \(\lambda\in\bbR\) \\
$\{\lambda_{ni},\bu_{ni}\}_{i=1}^{p}$ & Eigen-pairs of $\bS_n$ \\
$\mu_n=\frac1p\sum_{i=1}^{p}\delta_{\lambda_{ni}}$ & Spectral distribution of \(\bSn\)  \\
$\bR_n(z)=(\bS_n-z\bI_p)^{-1}$ & Resolvent of $\bS_n$ at \(z\in\mathbb{C}\backslash\bbR\) \\
$\msamplen(z)=\frac1p\operatorname{tr}\bR_n(z)$ & Stieltjes transform of $\mu_n$ at \(z\)\\
\bottomrule
\end{tabularx}
\end{minipage}

\vspace{.25cm}

\section{Nonlinear shrinkage background and refinements} \label{sec:nonlinear-shrinkage}

In a seminal paper extending the classical \MP{} theorem of \cite{silverstein1995strong}, Ledoit and P\'{e}ch\'{e} \cite{ledoit2011eigenvectors} derive an almost sure deterministic limit for generalized resolvents of the form \(\myTheta^g_n(z) = p^{-1}\tr(g(\bfrn) \bRn(z))\), where \(g\) is a bounded piece-wise continuous function applied spectrally to \(\bfrn\).
These resolvents are important in covariance and precision shrinkage, as well as in the estimation of high-dimensional eigenvector biases, which appear, for example, in principal-components analysis.

We present in Subsection~\ref{sec:lp-theory} a refinement of the \lp{} theory that provides explicit convergence rates---analogous to Theorem~\ref{thm:local-mp}'s refinement of \cite{silverstein1995strong}. 
Then, in Subsection~\ref{sec:lw-theory}, we present a similar refinement of Ledoit and Wolf's work \cite{ledoit2020analytical}, which can be seen as the empirical version of Ledoit and P\'{e}ch\'{e}'s law.
These refinements will enable us to standardize the \PHT{}s of interest in Section~\ref{sec:EDs}.

\vspace{.25cm}
\begin{minipage}{\linewidth}
\centering
\setlength{\tabcolsep}{10pt}
\renewcommand{\arraystretch}{1.1}
\captionof{table}{More key random matrix symbols\label{tab:RMT2}}
\begin{tabularx}{\linewidth}{>{\raggedright\arraybackslash}p{0.35\linewidth}X}
\toprule
\textbf{Symbol} & \textbf{Meaning / definition} \\
\midrule
$\umu$,\;$w(x)=d\umu(x)/dx$ & The MP distribution and density \\
$\um(z)$ & Stieltjes transform of \(\umu\), limit of \(\msamplen(z)\)  \\
$\brevem(x)$ & \eqref{eq:brevem}, \(\lim\um(z)\) as \(z\in\Cplus\to x\in \bbR\) \\
$F=\operatorname{supp}\umu$ & The MP spectrum \\
$\kappa(x)=\operatorname{dist}(x, \partial F)$ & Distance to spectral edge \\
$\eta$ & $\Im z \in [n^{-1+\epsilon}, 1]$, some $\epsilon > 0$ \\
$O_\prec(a_n)$ & Stochastically dominated by \(a_n\) \\
$o_P(1)$ & Converges to zero in probability \\
$ a_n \lessapprox b_n$ & $|a_n/b_n|$ eventually bounded above \\
$ a_n \asymp b_n$ & $a_n \lessapprox b_n$ and $b_n \lessapprox a_n$  \\
$\htrans[f](x) = \htrans f(x)$ & \eqref{eq:hilb-def}, Hilbert transform of \(f\), evaluated at \(x\) \\
$\lv A \rv_{HS}$ & Hilbert-Schmidt norm \(\tr(AA^*)^{1/2}\) of matrix \(A\) \\
\bottomrule
\end{tabularx}
\end{minipage}
\vspace{.25cm}

\subsection{\lp{} Theory} \label{sec:lp-theory}
The functions \(\Theta_n^g(z)\) are of particular interest when \(g(x)=1\), just discussed in Theorem~\ref{thm:local-mp}, and when \(g(x)=\mathrm{id}(x) \equiv x\), which is the subject of this section.  When \(g = \mathrm{id}\), we will simplify notation by writing \(\thetaonenz\) instead of \(\myTheta^{\mathrm{id}}_n(z)\). Then the following analogue of the classical \MP{} theorem holds.
\begin{thm}[Theorem~4 of \cite{ledoit2011eigenvectors}]
\label{thm:lp}
Assume \emph{[\textsc{Train}]} and \(z\in\Cplus\).
Let
    \[
    \thetaonenz := p^{-1}\tr(\bfrn\bRn(z)).
    \]
    Then \(\thetaonenz =  \strans[d\lpmeasn](z)\), where 
\[
\lpmeasn:=\frac{1}{p}\sum_{i=1}^p \lpqfnilong \delta_{\lamni},
\]
  and
     there exists a nonrandom analytic function \(\uthetaonez\)  such that
    \begin{equation} \label{eq:avgd-local-lp}
    \thetaonenz = \uthetaonez + o_{a.s.}(1)
    \end{equation} 
    as \(n, p\to\infty\).
Furthermore, if 
 the function \(\delta:\mathbb{R}_+\to\mathbb{R}_+\) is defined using \eqref{eq:delta},
then
\begin{equation} \label{eq:lp}
d\lpmeasn \to \delta\, d\umu =: d\ulpmeas
\end{equation}
weakly almost surely as \(n, p\to\infty\).
\end{thm}
\begin{proof}
   The first statement follows from diagonalization of \(\bRn(z)\) and the circular-permutation property of trace.  Equation \eqref{eq:avgd-local-lp} follows from \cite[Theorem~2]{ledoit2011eigenvectors}.  Equation \eqref{eq:lp} follows from \cite[Theorem~4]{ledoit2011eigenvectors}.  
   
\end{proof}

 The choice \(g=\mathrm{id}\) studied in this section is important for several reasons.  First, the coefficients of the measure \(\lpmeasn\) resulting from \(\Theta^g_n(z)\) optimize  various loss functions of statistical interest, such as Frobenius loss \cite{ledoit2011eigenvectors}, reverse Stein loss \cite{ledoit2018optimal}, the Sharpe ratio appearing in financial portfolio optimization \cite{ledoit2017nonlinear}, and the Reed--Mallet--Brennan figure of merit in radar detection \cite{robinson2021space}, assuming an ignorant signal prior for the latter two.  A potential wrinkle is that \(\myw(x)\) has no known closed form expression except in a handful of special cases.  
 However, fortunately, Ledoit and Wolf \cite{ledoit2020analytical} have developed observable substitutes for \(\myw(x)\), \(\Hw(x)\), and \(\delta(x)\) that exhibit various forms of consistency, as we will discuss in Theorem~\ref{thm:lw}.

For the applications just mentioned, more machinery is not needed, but for the application in this paper, an explicit rate of convergence is needed between \(\myw(x)\) and its substitute.  We will address this rate in the next subsection, but first we need to refine Theorem~\ref{thm:lp} so that it includes precise convergence rates.

\begin{thm}[Local \lp{} Laws] \label{thm:local-lp}
Assume \emph{[\textsc{Train}]} and \(z\) is bounded in \(\Cplus\) with with \(\eta = \mathrm{Im}z \ge n^{-1+\epsilon}\) for some positive \(\epsilon\). Then, we have the following analogue of \eqref{eq:avgd-local-mp} from Theorem~\ref{thm:local-mp}:
\begin{equation} \label{eq:theta-lim}
    \thetaonenz = \uthetaonez + \oOPof{\frac{1}{n\eta}}
\end{equation}
as \(n,p\to\infty\). Furthermore (the ``small-scale laws,'') we have the following analogue of \eqref{eq:small-scale-mp} from Theorem~\ref{thm:local-mp}:
    \begin{equation} \label{eq:small-scale-lp}
       \sup_{(a,b)\subset \mathbb{R}} \left|\lpmeasn(a,b) - \ulpmeas(a,b)\right| \prec \frac{1}{n}
    \end{equation}
        and, uniformly in \(f\in C^2\),
    \begin{equation} \label{eq:portmanteau-lp}
        \int f\, d\nu_n = \int f\, d\ulpmeas + O_\prec\left(\frac{\lv f\rv_1}{n} + \frac{\lv f'\rv_1}{n} + \frac{\lv f''\rv_1}{n^2} \right),
    \end{equation}
    as \(n,p\to\infty\).
\end{thm}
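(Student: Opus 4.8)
The plan is to reproduce the two-tier structure of Theorem~\ref{thm:local-mp}: first promote the almost sure convergence in \eqref{eq:avgd-local-lp} of Theorem~\ref{thm:lp} to the quantitative statement \eqref{eq:theta-lim} by invoking the anisotropic local law of \cite{knowles2017anisotropic}, and then deduce the small-scale laws \eqref{eq:small-scale-lp}--\eqref{eq:portmanteau-lp} by feeding \eqref{eq:theta-lim} into the Helffer--Sj\"{o}strand argument already packaged in Appendix~\ref{sec:app-HS}. Throughout I would work under the standing mean-zero reduction of Section~\ref{sec:RMT}, identifying \(\bSn\) with \(n^{-1}\bXn\bXn\tps\); when the population mean is nonzero the sample-mean centering is absorbed by replacing \(\bXn\) with its product by a rank-\((n-1)\) projection, which keeps the ensemble inside the scope of \cite{knowles2017anisotropic}.

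First I would prove \eqref{eq:theta-lim}. Since \(\thetaonenz = p^{-1}\tr(\bfrn\bRn(z))\) and \(\bfrn\) is a deterministic matrix by \Htwo{} whose operator norm is bounded uniformly in \(n\) (its eigenvalues eventually lie in the compact set \(\supp\upopsm\) by \Hfour{}), the averaged anisotropic local law of \cite{knowles2017anisotropic} applies with the deterministic test matrix \(A=\bfrn\): writing \(\Pi_n(z)\) for the associated deterministic equivalent of \(\bRn(z)\), it yields \(p^{-1}\tr(\bfrn(\bRn(z)-\Pi_n(z))) = \oasneta\) uniformly for \(z\) bounded in \(\Cplus\). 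To finish one identifies the deterministic scalar \(p^{-1}\tr(\bfrn\Pi_n(z))\) with \(\uthetaonez\): it is a deterministic analytic function of \(z\); an \(\oasneta\) error converges to \(0\) almost surely for each fixed \(z\), so it shares the pointwise limit \(\uthetaonez\) of \(\thetaonenz\) supplied by Theorem~\ref{thm:lp}; and, given the speed of the population convergences in \Htre{}--\Hfour{}, the residual gap between \(p^{-1}\tr(\bfrn\Pi_n(z))\) and that limit is \(O(1/n)\), hence negligible against \(\oasneta\).

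With \eqref{eq:theta-lim} in hand, \(\thetaonenz = \strans[d\lpmeasn](z)\) is the Stieltjes transform of \(\lpmeasn\) and obeys a large-scale local law with limiting transform \(\strans[d\ulpmeas](z)\) --- precisely the hypothesis required by Corollary~\ref{cor:law_on_small_scales} and Lemma~\ref{lem:hs_main_result}. Applying those results to \(\lpmeasn\) in place of \(\mun\) gives \eqref{eq:small-scale-lp} and \eqref{eq:portmanteau-lp} exactly as they were obtained for \(\mun\) in the proof of Theorem~\ref{thm:local-mp}. The only extra input is the regularity of the limiting measure \(\ulpmeas = \lpdelta\,d\umu\): its support equals \(\suppumu\) (since \(\lpdelta>0\) on \(\suppumu\) by \eqref{eq:delta}), a finite union of nondegenerate closed intervals \cite{bai1998no}; its density \(\lpdelta(x)\,w(x)\) is bounded on \(\suppumu\), because the denominator of \eqref{eq:delta} is bounded below there (a standard fact behind well-definedness of the \lp{} shrinker) and \(w\) is bounded by \Hfiv{}; and near the endpoints of \(\suppumu\) it vanishes on the order of \(\kappa(x)^{1/2}\), because \(\lpdelta\) is bounded and continuous there while \(w(x)\asymp\kappa(x)^{1/2}\). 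Monotonicity of \(\lpdelta w\) is not needed, only these boundedness and square-root edge properties.

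The step I expect to be the main obstacle is the identification inside \eqref{eq:theta-lim}: one must confirm that the finite-dimensional Knowles--Yin deterministic equivalent \(p^{-1}\tr(\bfrn\Pi_n(z))\) --- built from \(\popsmn\) and \(p/n\) --- approaches the \(\upopsm\)-and-\(\aspect\) object \(\uthetaonez\) at rate \(O(1/(n\eta))\) or better, drawing only on \Htre{} (which furnishes \(|p/n-\aspect|=O(1/p)\)) and \Hfour{}; everything else is a direct appeal to \cite{knowles2017anisotropic} and to the appendix. A secondary technical nuisance is the mean-subtraction: unlike the scalar transform \(\msamplen\), the generalized trace \(\thetaonenz\) is \emph{not} insensitive to the rank-one centering correction, because \(\bfrn\) fails to commute with it, so it is cleanest to carry the centering as a rank-\((n-1)\) projection of the data matrix from the outset --- as noted above --- rather than treating it as a perturbation of \(\tilde{\bS}_n\).
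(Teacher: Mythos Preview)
Your treatment of the small-scale laws \eqref{eq:small-scale-lp}--\eqref{eq:portmanteau-lp} matches the paper's: both feed \eqref{eq:theta-lim} into the Helffer--Sj\"{o}strand machinery of Appendix~\ref{sec:app-HS}, and you correctly identify the regularity of \(\ulpmeas\) (bounded density with square-root edge behaviour inherited from \(w\)) that the argument needs.

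For \eqref{eq:theta-lim} itself your route differs substantially from the paper's. The paper does \emph{not} invoke an averaged local law with test matrix \(\bfrn\). Instead it works through the algebraic link between \(\thetaone\) and \(\msample\): starting from \(p^{-1}\tr(\bS\bRn(z))=1+z\msamplen(z)\) and the \lp{} identity \(1+z\um(z)=\aspect\uthetaone(z)/(\aspect+\aspect^2\uthetaone(z))\), it expands \(p^{-1}\tr(\bS\bRn(z))\) via \(\bS=\sum_k \bc_k\bc_k\tps\) and Woodbury, controls the quadratic forms \(\bc_k\tps\bR^{(k)}\bc_k\) by Hanson--Wright, and then upgrades the naive \((n\eta)^{-1/2}\) rate to \((n\eta)^{-1}\) through an almost-independence bound on the cross terms \(\epsilon_i\epsilon_j\). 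Finally the algebraic relation is inverted, which requires a separate two-sided bound on \(|1+\aspect\uTheta(z)|\). This is longer but reduces everything to the scalar local law for \(\msample\) already supplied by Theorem~\ref{thm:local-mp}.

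Your shortcut is appealing but the citation is the weak point. The averaged law in \cite{knowles2017anisotropic} (their Theorem~3.7) is stated for the \emph{unweighted} trace \(p^{-1}\tr(\bRn-\Pi_n)\), not for \(p^{-1}\tr\bigl(A(\bRn-\Pi_n)\bigr)\) with a general bounded deterministic \(A\). The entrywise anisotropic law by itself yields only \(O_\prec(\Psi)\sim O_\prec((n\eta)^{-1/2})\) for such weighted traces; getting down to \(O_\prec((n\eta)^{-1})\) needs a weighted fluctuation-averaging step, which is buried inside the Knowles--Yin proof rather than packaged as a black-box statement. If you can extract that (or cite a later source that states it explicitly), your argument is complete and cleaner than the paper's; if not, you are stuck at the square-root rate and the proof does not close. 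The paper's algebraic detour exists precisely to avoid this issue.
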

\begin{proof}[Proof Sketch] Equation~\eqref{eq:theta-lim} can be obtained using the same general techniques used to establish \eqref{eq:avgd-local-mp} in \cite{knowles2017anisotropic}.  For details, see \cite{latimer2023local} or the more recent and general work of \cite{ding2024eigenvector}.  The small-scale laws of Equations~\eqref{eq:small-scale-lp} and \eqref{eq:portmanteau-lp} follow from the arguments in Section~S.2
of the Supplementary Material.
\end{proof}

\subsection{\LW{} Approximation} \label{sec:lw-theory}

In this subsection, we present the approximations devised by Ledoit and Wolf \cite{ledoit2017direct} for the \MP{} density \(\myw(x)\) and its Hilbert transform.  Upon substitution in the expression for \(\delta(x)\), these approximations give rise to a shrinkage function \(\deltil(x)\) that exhibits uniform convergence in probability to \(\delta(x)\) on \(\suppumu\).  As discussed, this mode of convergence is sufficient to establish asymptotic optimality in the applications of \cite{ledoit2020analytical,robinson2021space}.

Our main refinement of \cite{ledoit2020analytical} is to develop an explicit rate of convergence for \(\deltil(x)\) to \(\delta(x)\), although  we study convergence in \(L^1(d\mun)\) instead of in \(L^{\infty}(d\umu)\).  With this essentially more relaxed convergence mode, we are able to obtain convergence at a rate of \(O_\prec(n^{-2/3})\), which will be fast enough 
to enable standardization and optimization of the \PHT{}s that will be our focus starting in the next section.

We now describe the main approximation result of \cite{ledoit2020analytical}. As mentioned before, the idea is to replace \(\myw(x)\) and \(\htrans\myw(x)\)  in \(\delta(x)\) by suitable substitutes.  First, one replaces \(\myw(x)\) everywhere by
\begin{equation} \label{eq:mywt}
\mywtnx := p^{-1}\sum_{i=1}^p k_{ni}(x),
\end{equation}
where
\begin{equation*} \label{eq:myk}
k_{ni}(x) := \dfrac{1}{n^{-1/3}\lamni} k\left(\dfrac{x-\lamni}{n^{-1/3}\lamni}\right)
\end{equation*}
for a suitable bump-like kernel function \(k\) with known Hilbert transform \(K\).  Second, they replace \(\htrans\myw(x)\) in \(\delta(x)\) by 
\begin{equation} \label{eq:myHwt}
\Hwtnx := p^{-1}\sum_{i=1}^p K_{ni}(x),
\end{equation}
where
\begin{equation*} \label{eq:myK}
K_{ni}(x) := \dfrac{1}{n^{-1/3}\lamni} K\left(\dfrac{x-\lamni}{n^{-1/3}\lamni}\right).
\end{equation*}
using invariance properties of \(\htrans\).  

The main theoretical result of \cite{ledoit2020analytical} is then the following, which states that if \(k\) is chosen appropriately 
and the above replacements are made in \(\delta(x)\), 
the resulting function consistently estimates \(\delta(x)\) in probability.
 \begin{thm}[Theorem~4.1 of \cite{ledoit2020analytical}] \label{thm:lw} 
     Assume \emph{[\textsc{Train}]} and \(k(x) = \frac{1}{2\pi}{\sqrt{(4-x^2)_+}}\) and \(K(x) = (2\pi)^{-1}(-x+\operatorname{sign}(x)\sqrt{(x^2-4)_+})\), where \((\mydot)_+=\max\{\mydot,0\}\).  Then, with \(\mywt_n(x)\) as in \eqref{eq:mywt} and \(\Hwt_n(x)\) as in \eqref{eq:myHwt}, the function
     \begin{equation} \label{eq:lweig}
     \lwEigX = \frac{x}{[1-\frac{p}{n}-\frac{p}{n}\pi x\Hwtnx]^2 + \frac{p^2}{n^2}\pi^2 x^2 \mywtnx^2}
     \end{equation}
     has the property that
     \[
     \sup_{x\in \suppumu} \lvert\lwEigX - \delta(x)\rvert = o_P(1)
     \]
     as \(n,p\to\infty\).
 \end{thm} 

 \begin{rem} 
     The only significant departure of our statement from \cite[Theorem~4.1]{ledoit2020analytical} is that Ledoit and Wolf actually choose \(k(x)\) to be the so-called \textit{Epanechnikov} kernel \(3(1-x^2/5)_+/(4\sqrt{5})\) in the main body of their paper, but remark in \cite[Supplement~D.5]{ledoit2020analytical} that the semicircular kernel above works equally well in theory.  We choose the latter since the Epanechnikov kernel appears to require much more care in its implementation to avoid numerical instabilities.
     As a reminder of this and other notation introduced in this section, Table~\ref{tab:Shrink} summarizes key symbols related to nonlinear shrinkage that will be used throughout the paper.
 \end{rem}

As indicated above, we do not analyze rates of stochastic convergence in the rather strong \(L^\infty(\umu)\) norm that appears in the above theorem since \(L^1(\mun)\) convergence will suffice for our purposes.  The following theorem provides an explicit rate of \(O_\prec(n^{-2/3})\) for this type of convergence---a dramatic improvement on the \(o_P(1)\) rate in \(L^1(\mun)\) that follows immediately from Theorem~\ref{thm:lw}---and is the culmination of the work presented so far. 
 
 \begin{thm}[Error Analysis of \LW{} Eigenvalues in \(L^1(\mun)\)] \label{thm:local-lw}
Assume \emph{[\textsc{Train}]}.  Then we have
\begin{equation*}
    p^{-1}\sum_{i=1}^p \left|\lwEig(\lamni) - \delta(\lamni)\right| \prec \frac{1}{n^{2/3}}
\end{equation*}
as \(n,p\to\infty\).
 \end{thm}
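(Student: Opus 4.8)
The plan is to reduce the statement to two $L^1(\mun)$-type estimates --- one for the kernel density estimate $\mywtn$, one for its Hilbert transform $\Hwtn$ --- and then push these through an elementary Lipschitz bound for the common rational form of $\lwEig$ and $\delta$. Write $\breve{m}(x)=\pi\htrans\myw(x)+i\pi\myw(x)$ and $\breve{m}_n(x):=\pi\Hwtnx+i\pi\mywtnx$. Since $K=\htrans k$ and $\htrans$ commutes with positive dilations and translations, $\Hwtn=\htrans[\mywtn]$, and both functions in the theorem take the form $\delta(x)=x\,|1-\aspect-\aspect x\breve{m}(x)|^{-2}$ and $\lwEigX=x\,|1-\aspect-\aspect x\breve{m}_n(x)|^{-2}$ (this also matches the alternative form of $\delta$ in the Remark following Theorem~\ref{thm:lp}). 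I would first record two structural facts under \Hone{}--\Hfiv{}: (i) on $\suppumu$ the variable $x$ and the edge quantities are comparable to absolute constants, and by the $n^{-1}$ small-scale law \eqref{eq:small-scale-mp} the mass $\mun(\Rslashnil\setminus\suppumu)$ is $\ooPof{1/n}$, so the $\ooPof{n^\epsilon}$ sample eigenvalues falling outside $\suppumu$ contribute only $\ooPof{n^{\epsilon-1}}$ to the sum and may be discarded; and (ii) $\inf_{x\in\suppumu}|1-\aspect-\aspect x\breve{m}(x)|>0$, i.e.\ the common denominator is bounded away from zero --- this is what makes $\delta$ (and, via the crude $L^\infty$ consistency already contained in Theorem~\ref{thm:lw}, also $\lwEig$ on a fixed neighborhood of $\suppumu$) bounded, and it follows from the Silverstein equation \eqref{eq:for-umu} together with the edge behaviour $\myw(x)\asymp\kappa(x)^{1/2}$. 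Granting (i)--(ii), the inequality $\big||u|^{-2}-|v|^{-2}\big|\le |u-v|\,(|u|+|v|)\,|u|^{-2}|v|^{-2}$ gives, on the relevant range and probability,
\begin{equation*}
\big|\lwEigX-\delta(x)\big|\ \lesssim\ \big|\mywtnx-\myw(x)\big|+\big|\Hwtnx-\htrans\myw(x)\big|,
\end{equation*}
so it suffices to show $p^{-1}\sum_i\big|\mywtn(\lamni)-\myw(\lamni)\big|\prec n^{-2/3}$ and the same with $\Hwtn-\htrans\myw$ replacing $\mywtn-\myw$.

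For the density estimate I would split $\mywtnx-\myw(x)=\big(\mywtnx-(T_nw)(x)\big)+\big((T_nw)(x)-\myw(x)\big)$, where $(T_nw)(x):=\int h(t)^{-1}k\big((x-t)/h(t)\big)\myw(t)\,dt$ is the variable-bandwidth smoothing of the limiting density with $h(t)=n^{-1/3}t$. The stochastic term is $\int h(t)^{-1}k\big((x-t)/h(t)\big)\,d(\mun-\umu)(t)$; integrating by parts in $t$ and invoking $\sup_{(a,b)}|\mun(a,b)-\umu(a,b)|\prec n^{-1}$ from Theorem~\ref{thm:local-mp} bounds it by $\big\|\partial_t\big[h(t)^{-1}k((x-t)/h(t))\big]\big\|_{L^1(dt)}\cdot\ooPof{n^{-1}}$, and since $h\asymp n^{-1/3}$ on $\suppumu$ and the semicircular kernel has $\|k'\|_{L^1}<\infty$ (only an integrable singularity at $\pm2$), this norm is $\myO(n^{1/3})$, giving $\ooPof{n^{-2/3}}$ uniformly in $x$ and hence after averaging over the $\lamni$. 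The deterministic bias $(T_nw)(x)-\myw(x)$ is the classical kernel bias: away from the edges it is $\myO\big(h(x)^2\,\myw''(x)\big)$, while within distance $\asymp h$ of an edge it is $\myO(h^{1/2})$ (both terms being $\asymp\kappa^{1/2}$ there). Decomposing $\suppumu$ into dyadic shells at distance $\asymp 2^jh$ from the edges --- whose $\mun$-mass is $\asymp(2^jh)^{3/2}$ by \eqref{eq:small-scale-mp} and the $\kappa^{1/2}$ edge behaviour --- the shell contributions sum to $\myO\big(h^2\log(1/h)\big)=\ooPof{n^{-2/3}}$.

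The crux is the Hilbert-transform estimate $p^{-1}\sum_i\big|\Hwtn(\lamni)-\htrans\myw(\lamni)\big|\prec n^{-2/3}$. The difficulty is that $\Hwtn-\htrans\myw=\htrans[\mywtn-\myw]$ and $\htrans$ is nonlocal, so it transports the large ($\asymp h^{1/2}$) density-estimate error sitting near a square-root edge out into the bulk, where $\mun$ carries $\myO(1)$ mass; consequently the soft $L^2$- or $L^1$-mapping bounds for $\htrans$ appear to deliver only a rate of $n^{-1/2}$, which is not enough. The route I expect to work is to stop estimating $\Hwtn$ in isolation and instead compare the full complex quantity $\breve{m}_n(x)$ with the resolvent trace $\msamplen(x+i\eta_n)$ at the critical scale $\eta_n\asymp n^{-1/3}$: the averaged local law \eqref{eq:avgd-local-mp} then contributes an error $\ooPof{1/(n\eta_n)}=\ooPof{n^{-2/3}}$ (this is exactly where the exponent $2/3$ originates), after which one is left with the purely deterministic harmonic-extension defect $\um(x+i\eta_n)-\breve{m}(x)$, which --- crucially --- is only $\myO(\eta_n^2\kappa^{-3/2})$ on $\suppumu$ (the larger $\myO(\eta_n\kappa^{-1/2})$ behaviour lives just outside the support, where $\mun$ has mass $\ooPof{1/n}$), so the same dyadic-shell integration as in the density case gives $\myO\big(\eta_n^2\log(1/\eta_n)\big)=\ooPof{n^{-2/3}}$. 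The remaining and genuinely delicate step --- the main obstacle --- is to show that replacing the (non-integrable) Poisson kernel implicit in $\msamplen(x+i\eta_n)$ by the compactly supported semicircular kernel that actually defines $\breve{m}_n$ does not spoil this rate: the semicircular-kernel bias of $\htrans\myw$ near an edge carries a nonzero net mass of size $\asymp\eta_n^{3/2}$, and controlling its Hilbert transform in $L^1(\mun)$ down to $\ooPof{n^{-2/3}}$ requires exploiting the precise cancellation in the bias profile of the semicircular kernel against a $\kappa^{1/2}$ edge, rather than any general mapping property of $\htrans$.
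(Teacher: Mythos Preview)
Your reduction to the two $L^1(\mun)$ estimates is correct and matches the paper, and your treatment of the density error $\mywtn-\myw$ is essentially the paper's argument: split into a stochastic term $\varphi_\Delta*(d\mun-d\umu)$ controlled by the small-scale/Helffer--Sj\"ostrand law, and a deterministic bias $\varphi_\Delta*\myw-\myw$ controlled by the edge regularity $|\myw^{(k)}|\lesssim\kappa^{1/2-k}$ together with an edge/bulk (or dyadic) decomposition.

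For the Hilbert-transform term, however, you take a detour that creates the very obstacle you then cannot close. Writing $\Hwtn-\htrans\myw=\htrans[\mywtn-\myw]$ and then trying to repair the nonlocality by comparing $\breve{m}_n$ with $\msamplen(x+i\eta_n)$ forces you into the Poisson-versus-semicircle kernel comparison, which you correctly flag as delicate and leave unresolved. The paper avoids this entirely by decomposing in the \emph{other} order:
\[
\Hwtn-\htrans\myw \;=\; \bigl(K_\Delta*\mun-K_\Delta*\umu\bigr)\;+\;\bigl(K_\Delta*\umu-\htrans\myw\bigr),
\]
and then using $K_\Delta*\umu=(\htrans\varphi_\Delta)*\myw=\varphi_\Delta*(\htrans\myw)$. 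The first term is a fixed test function $K_\Delta$ (after a smooth cutoff, since $\htrans k\notin L^1$) integrated against $\mun-\umu$, so the Helffer--Sj\"ostrand bound applies exactly as for the density, with $\lv K_\Delta'\rv_\infty\lesssim\Delta^{-2}$ giving $\ooPof{n^{-1}\Delta^{-1}}$. The second term is the kernel-smoothing bias of $\htrans\myw$ itself, and the key point you are missing is that $\htrans\myw=\pi^{-1}\mathrm{Re}\,\breve{m}$ enjoys the \emph{same} edge regularity $|(\htrans\myw)^{(k)}|\lesssim\kappa^{1/2-k}$ as $\myw$ (both are real and imaginary parts of the boundary value of the same analytic function with square-root behaviour at the edges). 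Hence the bias argument for $\myw$ repeats verbatim for $\htrans\myw$, with no nonlocal transport of edge error and no Poisson kernel in sight. Your resolvent-comparison route may be salvageable, but it is strictly harder than the paper's, and as written the ``precise cancellation'' step is a genuine gap.
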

 \begin{proof}
     See Section~S.3
     of the Supplemental Material.
 \end{proof}

With this result in hand, we may now return to our application of \blue{anomaly} detection.

\vspace{.25cm}
\begin{minipage}{\linewidth}
\centering
\setlength{\tabcolsep}{6pt}
\renewcommand{\arraystretch}{1.1}
\captionof{table}{Nonlinear shrinkage notation\label{tab:Shrink}}
\begin{tabularx}{\linewidth}{>{\raggedright\arraybackslash}p{0.35\linewidth}X}
\toprule
\textbf{Symbol} & \textbf{Meaning / definition} \\
\midrule
$f_n(\mydot)$ & Bounded spectral shrinkage function, possibly random \\
$\shrinker = \sum_{i=1}^p f_n(\lambda_{ni})\bu_{ni}\bu_{ni}\tps$ & Precision-shrinkage estimator ($p\times p$) \\
$\delta(x)\  (\ne \text{ the measure } \delta_x)$ & \eqref{eq:delta}, Deterministic limit from \lp{} \\
$\tilde d_n(x)$ & \eqref{eq:lweig}, \LW{} observable approximation of $\delta(x)$ \\
$k(x)$ & Semicircular kernel \((2\pi)^{-1}\sqrt{(4-x^2)_+}\) \\
$K(x)$ & \(\htrans k(x) = (2\pi)^{-1}(-x+\operatorname{sign}(x)\sqrt{(x^2-4)_+})\) \\
$\lpmeasn$ & \lp{} measure $p^{-1}\sum_{i=1}^p \lpqfnilong \delta_{\lamni}$ \\
$\Theta_n(z)$ & Stieltjes transform of \(d\lpmeasn\) \\
$\ulpmeas(a,b)$ & \(\int_a^b \delta(x)\, d\umu(x)\), limit of \(\lpmeasn(a,b)\)  \\ 
$\uTheta(z)$ & Stieltjes transform of \(d\ulpmeas\) and limit of \(\Theta_n(z)\)\\
\bottomrule
\end{tabularx}
\end{minipage}
\vspace{.25cm}

\section{Theoretical contributions}   \label{sec:EDs}

In order to discuss the asymptotic properties of our test statistic \eqref{eq:eed}, we must now introduce a model for our test datum \(\by=\byn\).  This random vector must, like the \(\bxnind\)'s, of course be \(p\times 1\) and increasing in dimension as \(n\) increases. We assume that \(\byn = \bmuyn + \bfrhalfn \bzn\) for some unknown  (possibly random) vector \(\bmuyn\), where the components of \(\bzn\) are i.i.d$.$ \(\sim W\) and independent of \(\bxnind\). In other words, we assume \(\byn - \bmuyn\) and \(\{\bxnind\}_{i=1}^n\) are jointly i.i.d.

In Subsection~\ref{sec:standardization}, we control asymptotic significance levels for the \PHT{} (Theorem~\ref{thm:significance}).
Then, in Subsection~\ref{sec:detection-power}, we find a surrogate functional for detection performance (Theorem~\ref{thm:deterministic-limit}(a)) and identify an (unobservable) shrinkage function that optimizes it in the limit (Theorem~\ref{thm:deterministic-limit}(b)).  Finally, we present a practical empirical shrinkage function that approximates the deterministic optimum (Theorem~\ref{thm:approx-fopt}).  Throughout this section we will need the following notion of a regular sequence of functions.
    \begin{definition}
    Suppose \(f_n \in C^2(\bbR)\) be a sequence for which \(|f_n|\) is uniformly bounded for large enough \(n\),  \(|f'_n|\prec n^{1/3}\),  \(|f''_n|\prec n^{2/3}\), and \(\lv \bfrn \shrinker\rv_{HS}^2 \asymp p\) in probability.  Then we say \(f_n\) is \textit{regular}.
\end{definition}

\subsection{Significance level}
\label{sec:standardization}

In this subsection, we fix a regular sequence \(f_n\) and determine a valid asymptotic significance level for the statistic \(\myTsquared = \myTsquared(f_n)\), defined in \eqref{eq:eed}.  Suppose we shift and rescale by empirical quantities \(\tmeann p \) and \(\tsign \sqrt{p}\), where \(\tmeann,\tsign \) are order-one in probability, obtaining
    \begin{equation} \label{eq:Zon}
    \Zanalyticn := \frac{\myTsquared - \tmeann p}{\tsign \sqrt{p}}.
    \end{equation}
    Expanding the quadratic form \(\myTsquared\), it can be shown that the mean and variance of \(\myTsquared\) are both almost surely of order \(p\) and that the quantity \(\ox_n\) can be  replaced by \(\bmun\) without affecting the limiting distribution of \(\Zanalyticn\).  Thus, if we define the following unobservable quantity
        \begin{equation*} \label{eq:Ztildiff}
    \Zanalytic^o_n := \frac{\bz_n\tps\tilde{\bfr}_n\bz_n - \tmeann p}{\tsign\sqrt{p}},
    \end{equation*}
    then \(\Zanalyticn - \Zanalytic^o_n\) converges in distribution to \(\delta_0\),
    where \(\tilde{\bfr}_n = \bfrhalfn \shrinker \bfrhalfn\). 
    
    Let \(\Zorign\) be the additional unobservable quantity defined as follows
    \begin{equation} \label{eq:zorig}
   \Zorign := \frac{\bz_n\tps\tilde{\bfr}_n\bz_n - \meann p}{\sign\sqrt{p}},
    \end{equation}
    where 
    \begin{equation*}
        \meann \equiv \meann(f_n) := p^{-1}\sum_{i=1}^p f_n(\lamnind) \unind\tps \bfrn \unind
    \end{equation*} and 
    \begin{equation*}
    (\sign)^2 \equiv \sign(f_n)^2  := p^{-1} \tr(f_n(\bSn) \bfrn f_n(\bSn) \bfrn).
    \end{equation*}
    In the special case where the fourth moment of \(W\) matches a standard normal's, this statistic is asymptotically normal by conditioning on \(\bXn\), applying Berry--Esseen \cite{berry1941accuracy,esseen1942liapunoff} , and taking the expectation over the distribution of \(\bXn\). Comparing \(\Zorign\), \(\Zanalytic^o_n\), and \(\Zanalyticn\), this special case thus motivates choosing  \(\tmeann\) and \(\tsign\) to consistently approximate \(\meann\) and \(\sign\) in probability.  Suitable approximations will be given in Lemma~\ref{thm:empirical-standardization}, but first we need an intermediate approximation of \((\sign)^2\), in terms of a deterministic quadratic functional that we denote \(\sigma_\infty^2\).

\begin{lem} \label{lem:det-lim-sigma}
    Assume \emph{[\textsc{Train}]} and let \(\tilhilb:= L^2(\suppumu)\). Let \(\mysigma^2(f)\) be a quadratic functional on \(\tilhilb  \) defined by 
    \begin{equation} \label{eq:sigmainfty}
        \mysigma^2(f) = \int_{\suppumu} [\myGamma f(x)]^2 x\delta(x)\, d\umu(x),
    \end{equation}
    where \(\myGamma: \tilhilb \to \tilhilb\) is defined by
    \[
    \myGamma f(x) = f - \asp \pi \left(\htrans[ f \delta w] - \htrans[\delta w]f\right). \qquad (x\in \suppumu)
        \]
    Then, if \(f_n\) is regular we have
    \begin{equation*}
        \sign(f_n) = \mysigma(f_n) + \oas(1)
    \end{equation*}
    as \(n,p\to\infty\).
\end{lem}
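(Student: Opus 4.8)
The statement is of deterministic‑equivalent (law‑of‑large‑numbers) type, so qualitatively I need only identify the limit of \(\sign^2(f_n)=p^{-1}\tr\!\big(f_n(\bSn)\bfrn f_n(\bSn)\bfrn\big)\); no CLT is involved. The plan is to reduce this to a two‑resolvent functional via Helffer--Sjöstrand, compute that functional's deterministic equivalent from the anisotropic local law of \cite{knowles2017anisotropic}, and repackage the answer as \(\mysigma^2(f_n,f_n)\). Writing the sample eigen‑decomposition \(\bSn=\sum_i\lamni\buni\buni\tps\) and \(M_{ij}:=\buni\tps\bfrn\bu_{nj}\), one has \(\sign^2(f_n)=\iint f_n(x)f_n(y)\,d\Xi_n(x,y)\) for the random positive measure \(\Xi_n:=p^{-1}\sum_{i,j}M_{ij}^2\,\delta_{(\lamni,\,\lambda_{nj})}\) on \(\bbR^2\) (total mass \(p^{-1}\tr(\bfrn^2)=O(1)\)); equivalently, with a quasi‑analytic extension \(\tilde f_n\) of \(f_n\) supported in \(\{|\myIm z|\le 2\}\) satisfying \(|\bar\partial\tilde f_n(x+iy)|\lesssim|y|\,|f_n''(x)|+\mathbf{1}(1\le|y|\le 2)(|f_n(x)|+|y|\,|f_n'(x)|)\),
\[ \sign^2(f_n)=\frac1{\pi^2}\iint\bar\partial\tilde f_n(z_1)\,\bar\partial\tilde f_n(z_2)\,\psi_n(z_1,z_2)\, dA(z_1)\, dA(z_2),\qquad \psi_n(z_1,z_2):=p^{-1}\tr\!\big(\bfrn\bRn(z_1)\bfrn\bRn(z_2)\big), \]
where \(\iint(x-z_1)^{-1}(y-z_2)^{-1}\,d\Xi_n(x,y)=\psi_n(z_1,z_2)\).

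The technical core is a deterministic equivalent for \(\psi_n\). Inserting the anisotropic local law \(\bRn(z)\approx\myPi_n(z)\) (with \(\myPi_n(z)\) the explicit deterministic matrix built from \(\bfrn\) and \(\unders(z)\), controlled against deterministic test matrices down to \(\myIm z\gg n^{-1}\)) into the standard resolvent expansion for a product of two resolvents yields \(\psi_n(z_1,z_2)=\overline\psi_\infty(z_1,z_2)+(\text{error})\), with \(\overline\psi_\infty\) explicit and carrying a ``ladder'' denominator \(1-\mathcal{D}(z_1,z_2)\), \(\mathcal{D}\) a scalar bubble built from \(\unders(z_1),\unders(z_2),z_1,z_2\) and from traces \(p^{-1}\tr(\bfrn^k\myPi_n(z_1)\myPi_n(z_2))\). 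Sending \(z_j\to x_j\in\suppumu\) and using the boundary value \(\brevem(x)=\pi\htrans[\myw](x)+i\pi\myw(x)\), the Silverstein relation \eqref{eq:for-umu}, the boundary formula \(\strans[f\,d\ulpmeas](x)=\pi\htrans[f\,d\ulpmeas](x)+i\pi f(x)\delta(x)\myw(x)\), and the identity (a consequence of \eqref{eq:for-umu}, \eqref{eq:delta}, and positivity of the spectra of \(\bfrn\) and \(\bSn\)) that the companion quantity \(u(z):=1-\asp-\asp z\unders(z)\) satisfies \(u(z)^{-1}=1+\asp\,\strans[d\ulpmeas](z)\) --- whence \(\myRe\,u(x)^{-1}=1+\asp\pi\htrans[d\ulpmeas](x)\) and \(\delta(x)=x/|u(x)|^2\) --- one reads off the deterministic limit \(\Xi_\infty=\lim\Xi_n\). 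After the Poincaré--Bertrand correction for the product of two principal‑value integrals, \(\Xi_\infty\) has diagonal singular density \(\delta^2\myw\) (consistent with \(M_{ii}\to\delta(\lamni)\), the pointwise refinement of \(\lpmeasn\to\ulpmeas\) from Theorem~\ref{thm:lp}) and an absolutely continuous part for which \(\iint f(s)g(t)\,d\Xi_\infty(s,t)=\int_{\suppumu}Tf\,Tg\,x\delta\,d\umu=\mysigma^2(f,g)\); this last equality is a direct but lengthy check that \(T\) exactly absorbs \(|u|^{-2}\) and the ladder denominator. (Heuristically, as \(\asp\downarrow 0\) one has \(\bSn\to\bfrn\), \(T=\mathrm{id}\), \(\delta=\mathrm{id}\), \(\umu=\upopsm\), and both sides collapse to \(\int f^2 x^2\,d\upopsm\).)

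To pass from this deterministic equivalent to the statement uniformly over regular \(f_n\), substituting the \(\psi_n\)‑error directly into the Helffer--Sjöstrand double integral is too wasteful --- a regular \(f_n\) has \(\|f_n''\|_1\prec n^{2/3}\) --- so I would instead pass to distribution functions. The needed input, proved exactly as \eqref{eq:small-scale-lp} is proved in Appendix~\ref{sec:app-HS} but applying the Helffer--Sjöstrand argument to pairs of mollified indicators and using the above equivalent for \(\psi_n\) at scales \(\myIm z\gtrsim n^{-1}\) in place of the single‑resolvent control, is the two‑dimensional small‑scale law
\[ \sup_{(a_1,b_1),(a_2,b_2)\subset\bbR}\big|\Xi_n\big((a_1,b_1)\times(a_2,b_2)\big)-\Xi_\infty\big((a_1,b_1)\times(a_2,b_2)\big)\big|\prec\frac1n. \]
Replacing \(f_n\) by a compactly supported \(C^1\) function agreeing with it on a fixed neighbourhood of the (bounded) supports of \(\Xi_n,\Xi_\infty\) and integrating by parts twice gives \(\sign^2(f_n)-\mysigma^2(f_n,f_n)=\iint f_n'(x)f_n'(y)\,(G_n-G_\infty)(x,y)\,dx\,dy\) (with \(G_n,G_\infty\) the two‑dimensional distribution functions and all boundary terms vanishing), so \(|\sign^2(f_n)-\mysigma^2(f_n,f_n)|\prec n^{-1}\|f_n'\|_1^2\prec n^{-1/3}\). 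Since \(|\sqrt a-\sqrt b|\le\sqrt{|a-b|}\) for \(a,b\ge 0\), this yields \(|\sign(f_n)-\mysigma(f_n,f_n)|\prec n^{-1/6}\), hence \(\oas(1)\) by Borel--Cantelli.

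The hard part is the second step: obtaining \(\overline\psi_\infty\) with an error strong enough to drive the two‑dimensional small‑scale law, and then carrying out the Hilbert‑transform algebra --- including its Poincaré--Bertrand diagonal correction --- that collapses \(\Xi_\infty\) to \(\int Tf\,Tg\,x\delta\,d\umu\). Maintaining the resolvent estimates uniformly down to \(\myIm z\sim n^{-1}\), especially near the spectral edges where \(\myw(x)\asymp\wthree(x)^{1/2}\) and \(\|\bRn(z)\|\) is least favourable, is where the effort goes; as with Theorems~\ref{thm:local-mp} and \ref{thm:local-lp} I would relegate this to an appendix.
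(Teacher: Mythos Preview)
Your proposal is correct in outline but takes a genuinely different route from the paper. The paper's Appendix~\ref{app:alternating-trace} does not invoke the anisotropic local law \(\bRn(z)\approx\myPi_n(z)\) at all; instead it reuses the leave-one-out/Woodbury machinery of Appendix~\ref{app:lp-rate}. Concretely, it writes \(p^{-1}\tr(\bS\bRn(z)\myXi\bRn(w))\) via \(\bS=\sum_k\bc_k\bc_k\tps\), applies Woodbury in each summand to pass to \(\bR^{(k)}\), concentrates the resulting quadratic forms by Hanson--Wright (with the ``almost-independence'' trick of Lemma~\ref{lem:epsilonij} to upgrade \(1/\sqrt{n}\) to \(1/n\)), and arrives at the closed product formula \eqref{eq:discrete-alt-trace}. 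After expanding the product of three sums and applying partial fractions/Fubini it obtains the four-term expression \eqref{eq:four-trace}, which factors directly as \(\int Tf\,Tg\,x\xi\,d\umu\) using the alternative form \(Tf(x)=f(x)-\asp\int\frac{f(y)-f(x)}{y-x}\,d\ulpmeas(y)\); no Poincar\'e--Bertrand correction or ladder denominator appears explicitly because the factored form is read off before taking boundary values. The passage from resolvents to regular \(f_n\) is also different: rather than a two-dimensional small-scale law plus integration by parts, the paper integrates \eqref{eq:four-trace} against \(\tilde f_n(\eta_z)\tilde g_n(\eta_{\myclx})\) (i.e., convolves with a Cauchy kernel) and sends \(\eta_z,\eta_{\myclx}\to 0\) at a rate between \(n^{-1/2}\) and \(n^{-1/3}\), using only that \(\tilde f_n*\varphi_\eta=\tilde f_n+O_\prec(n^{2/3}\eta^2)\) for regular \(\tilde f_n\).

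Your approach is cleaner conceptually and gives a quantitative rate (\(n^{-1/3}\)) for the variance, whereas the paper only claims \(o_{a.s.}(1)\). The trade-off is that your key step---the deterministic equivalent for \(\psi_n(z_1,z_2)\) with error \(O_\prec((n\eta_1\eta_2)^{-1})\)---does \emph{not} follow by simply inserting \(\bRn\approx\myPi_n\) twice, since the anisotropic law only controls \(\bRn-\myPi_n\) against deterministic test matrices and here one factor is the random \(\bfrn\bRn(z_1)\bfrn\). You seem aware of this (``ladder denominator,'' ``standard resolvent expansion''), but to make it rigorous you would need either a genuine two-resolvent local law or, in effect, the same Woodbury/Hanson--Wright computation the paper carries out. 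The paper's route is more elementary and entirely self-contained within the toolkit already developed for Theorem~\ref{thm:local-lp}.
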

\begin{proof}
    See Section~S.4
    of the Supplemental Material.
\end{proof}

With these formulas in mind, we now present
our choices of \(\tsign\) and \(\tmeann\), as well as the resulting asymptotic equivalence of \(\Zanalyticn\) and \(\Zorign\).
\begin{lem} \label{thm:empirical-standardization}
    Assume \emph{[\textsc{Train}]} and that \(f_n\) is regular. Let
    \[
     \tmeann \equiv \tmeann(f_n) := p^{-1}\sum_{i=1}^p f_n(\lamnind) \lwEig(\lamni)
    \]
    and
       \begin{equation*}
      \myGamman f(x) :=  f(x)-\frac{1}{n}\sum_{j=1}^p(f(\lambda_{nj})-f(x))\lwEig(\lambda_{nj}) K_{nj}(x)
   \end{equation*}
   and
   \begin{align} \label{eq:sigtiln}
   \tsign^2 \equiv \tsign^2(f_n) :=  p^{-1}\sum_{i=1}^p  \left[ \myGamman f_n(\lamnind)\right]^2 \lamni \lwEig(\lamni) .
   \end{align}
Then, with \(\Zanalyticn\) and \(\Zorign\) as defined in \eqref{eq:Zon} and \eqref{eq:zorig},
    \(\Zanalyticn - \Zorign\)
    converges in distribution to \(\delta_0\) as \(n,p\to\infty\).
\end{lem}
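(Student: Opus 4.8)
The plan is to route everything through the unobservable comparison statistics already in play. As noted just before the statement, the stated choices of $\tmeann$ and $\tsign$ are exactly those for which $\Zanalyticn - \Zanalytic^o_n$ converges in distribution to $\delta_0$ (binomial expansion of $\myTsquared$ plus the almost-sure replacement of $\oxn$ by $\bmun$), where $\Zanalytic^o_n = (\bzn\tps\tilde{\bfr}_n\bzn - \tmeann p)/(\tsign\sqrt{p})$ with $\tilde{\bfr}_n = \bfrhalfn\shrinker\bfrhalfn$. So it suffices to show $\Zanalytic^o_n - \Zorign \toprob 0$. Writing $Q_n := \bzn\tps\tilde{\bfr}_n\bzn$ and using $\bbE[Q_n\mid\bXn] = \tr(\shrinker\bfrn) = p\meann$, one decomposes
\[
\Zanalytic^o_n - \Zorign \;=\; \Zorign\left(\frac{\sign}{\tsign}-1\right) \;+\; \frac{\sqrt{p}\,(\meann-\tmeann)}{\tsign}.
\]
Since $\Zorign = (Q_n - \tr\tilde{\bfr}_n)/\lv\tilde{\bfr}_n\rv_\hs$ is a centred quadratic form in the subgaussian vector $\bzn$ normalized by its Hilbert--Schmidt norm, Hanson--Wright-type concentration gives $\Zorign = \ooPof{1}$; and, assuming (as is implicit in the statement, and holds for any $f_n$ bounded below by a positive constant) that $\mysigma(f_n,f_n)$ is bounded away from $0$ and $\infty$, so is $\tsign$ once claim~(ii) below is in hand. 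Thus the lemma reduces to two claims: (i) $\sqrt{p}\,(\meann-\tmeann) \toprob 0$, and (ii) $\sign/\tsign \toprob 1$.

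For (i), we show $\meann$ and $\tmeann$ are each within $\ooPof{n^{-2/3}}$ of $\int f_n\,\delta\, d\umu$. Since $\meann = \int f_n\, d\lpmeasn$, the portmanteau bound \eqref{eq:portmanteau-lp} of Theorem~\ref{thm:local-lp} with $\lv f_n\rv_1 = \myO(1)$, $\lv f_n'\rv_1 \prec n^{1/3}$, $\lv f_n''\rv \prec n^{2/3}$ (regularity; first multiply $f_n$ by a fixed smooth cutoff, harmless on the compact $\suppumu$) gives $\meann = \int f_n\, d\ulpmeas + \ooPof{n^{-2/3}} = \int f_n\,\delta\, d\umu + \ooPof{n^{-2/3}}$, using $d\ulpmeas = \delta\, d\umu$. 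On the other side $\tmeann = \int f_n\,\lwEig\, d\mun$; replacing $\lwEig$ by $\delta$ costs $\le \lv f_n\rv_\infty\, p^{-1}\sum_i\labs\lwEig(\lamni)-\delta(\lamni)\rabs \prec n^{-2/3}$ by Theorem~\ref{thm:local-lw}, and then $\int f_n\,\delta\, d\mun - \int f_n\,\delta\, d\umu = \ooPof{n^{-2/3}}$ by the small-scale \MP{} law \eqref{eq:small-scale-mp} and a Stieltjes integration by parts, using that $f_n\delta$ has total variation $\prec n^{1/3}$ on $\suppumu$ (the only delicacy being the $\kappa^{-1/2}$ blow-up of $\delta'$ at the spectral edges, which is integrable, $\delta$ itself staying bounded). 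Hence $\labs\meann-\tmeann\rabs \prec n^{-2/3}$, so $\sqrt{p}\,\labs\meann-\tmeann\rabs \prec n^{-1/6}\to 0$.

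For (ii), Lemma~\ref{lem:det-lim-sigma} gives $\sign^2 = \mysigma^2(f_n,f_n) + \oas(1)$, so it suffices to prove $\tsign^2/\mysigma^2(f_n,f_n) \toprob 1$. Starting from $\tsign^2 = \int [T_n f_n(x)]^2\, x\,\lwEig(x)\, d\mun(x)$, I would argue in three stages. (a) First bound $\lv T_nf_n\rv_\infty$: splitting the kernel sum defining $T_n$ at scale $n^{-1/6}$ and using that the Ledoit--Wolf kernel $K$ is bounded, one gets $\lv T_nf_n\rv_\infty \prec n^{1/6}$ (comfortably $\myo(n^{1/3})$), after which replacing $\lwEig$ by $\delta$ in the integral costs $\le \lv T_nf_n\rv_\infty^2\,(\max_i\lamni)\,p^{-1}\sum_i\labs\lwEig(\lamni)-\delta(\lamni)\rabs \prec n^{1/3}n^{-2/3} = \ooPof{n^{-1/3}}$. (b) Show $T_nf_n(x) = Tf_n(x) + \myo(1)$ uniformly on $\suppumu$ with high probability: writing $n^{-1}\sum_j\lwEig(\lambda_{nj})K_{nj}(x) = (p/n)\,p^{-1}\sum_j\lwEig(\lambda_{nj})K_{nj}(x)$ (and the analogous sum with an extra factor $f_n(\lambda_{nj})$), one recognizes---via $p/n\to\asp$ and the Ledoit--Wolf kernel identities behind Theorem~\ref{thm:lw}, sharpened by the $L^1(\mun)$ rate of Theorem~\ref{thm:local-lw}---that $p^{-1}\sum_j\lwEig(\lambda_{nj})K_{nj}(x)\to\pi\htrans[\delta\, d\umu](x) = \pi\htrans[d\ulpmeas](x)$ and $p^{-1}\sum_j f_n(\lambda_{nj})\lwEig(\lambda_{nj})K_{nj}(x)\to\pi\htrans[f_n\, d\ulpmeas](x)$, so that $T_nf_n \to f_n - \asp\pi\big(\htrans[f_n\, d\ulpmeas] - \htrans[d\ulpmeas]\,f_n\big) = Tf_n$. (c) Conclude via $\int([T_nf_n]^2 - [Tf_n]^2)\, x\,\delta\, d\mun = \ooPof{n^{-1/3}}$ (from (b) and the bound on $\lv T_nf_n\rv_\infty$) and $\int[Tf_n]^2 x\,\delta\, d\mun - \int[Tf_n]^2 x\,\delta\, d\umu = \ooPof{n^{-1/3}}$ (small-scale \MP{} law again, as $[Tf_n]^2 x\delta$ has total variation $\prec n^{2/3}$), which identifies the limiting integral as $\mysigma^2(f_n,f_n)$.

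Combining (a)--(c) with Lemma~\ref{lem:det-lim-sigma} gives $\tsign^2/\sign^2 \toprob 1$, hence $\sign/\tsign\toprob 1$; together with (i) and $\Zorign = \ooPof{1}$, Slutsky's theorem finishes the proof. The main obstacle is stage (b) of claim (ii): it is precisely where the kernel-smoothed (hence nearly singular) Hilbert transforms in $T_n$ must be matched to the genuine ones in $T$, uniformly near the square-root edges of $\suppumu$, while propagating the $L^1(\mun)$ rate of Theorem~\ref{thm:local-lw} and allowing $f_n$ the full derivative growth $\lv f_n'\rv \prec n^{1/3}$, $\lv f_n''\rv \prec n^{2/3}$ permitted by regularity; everything else is a routine assembly of the local laws above with Cauchy--Schwarz, Chebyshev's inequality, and Slutsky's theorem.
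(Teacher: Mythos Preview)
Your proposal is essentially correct and follows the paper's own route: reduce to (i) $\sqrt{p}\,|\meann-\tmeann|\to 0$ via Theorems~\ref{thm:local-lp} and~\ref{thm:local-lw}, and (ii) consistency of $\tsign$ for $\sign$ via $T_nf_n\approx Tf_n$. Your decomposition of $\Zanalytic^o_n-\Zorign$ and the use of Hanson--Wright to control $\Zorign$ make explicit what the paper leaves implicit.

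The one genuine technical difference is in stage~(b) of~(ii). You aim for \emph{uniform} convergence $T_nf_n\to Tf_n$ on~$\suppumu$ and then pass the integral $\int[T_nf_n]^2 x\delta\,d\mun$ to its limit by a total-variation/integration-by-parts argument against the small-scale law. The paper (Appendix~\ref{app:consistent-variance}) does \emph{not} try for uniform convergence; instead it proves the $L^1$ rates $\lv(T_nf_n-Tf_n)^{(j)}\rv_{L^1(\nbdsupp)}\prec\Delta^{2-j}$ for $j=0,1,2$ and feeds these directly into the Helffer--Sj\"ostrand bound (Lemma~\ref{lem:hs_main_result}) applied to $g_n=(T_nf_n)^2-(Tf_n)^2$. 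This buys two things: it sidesteps the edge issue you correctly flag as the ``main obstacle'' (square-root behaviour of~$w$ and~$\htrans[\delta w]$ at the boundary of~$\suppumu$ makes pointwise control of the kernel Hilbert transforms delicate there), and it accommodates the full derivative growth $\lv f_n'\rv\prec n^{1/3}$, $\lv f_n''\rv\prec n^{2/3}$ without the extra $n^{1/6}$ loss you incur from your crude $\lv T_nf_n\rv_\infty$ bound. Your route would work, but the paper's $L^1$-of-derivatives path is cleaner and avoids the uniform-near-the-edge estimate you yourself identify as the sticking point.
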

\begin{proof}
It follows from Theorems~\ref{thm:local-lp} and \ref{thm:local-lw} together is that we have
\[
\frac{|\meann(f_n) - \tmeann|p}{\sqrt{p}} = \sqrt{p} |\meann(f_n) - \tmeann| \prec \sqrt{p}\left(\frac{\lv f_n\rv_1}{n} + \frac{\lv f_n'\rv_1}{n} + n^{-4/3} \right)\prec \frac{\sqrt{p}}{n^{2/3}} \prec \frac{1}{n^{1/6}}
\]
as \(n\to\infty\).  Thus, as long as \(\tsign\) is a consistent approximation of \(\sign(f_n)\) our choice of \(\tmeann\) is suitable. 
The consistency of \(\tsign\) and \(\sign(f_n)\) follows from the formal similarity of \eqref{eq:sigtiln} and \eqref{eq:sigmainfty}, the fact that \(\sign(f_n)^2\asymp p^{-1}\lv \bfrn \shrinker\rv_{HS}^2\asymp 1\), and the analysis of Section~S.5
of the Supplemental Material.
\end{proof}

It remains to calculate the asymptotic significance level of  \(\Zorign\). First, we note that the null distribution of \(\Zorign\) converges conditionally almost surely (and thus, unconditionally) to a mean-zero univariate Gaussian by Berry--Esseen again and the boundedness assumption on \(f_n\).  (See Figure~\ref{fig:standard}.) However, the variance of this Gaussian depends on both \(f_n\) and the \textit{excess kurtosis} of \(W\), defined as \(\bbE[W^4]-3\).  

\begin{rem} \label{rem:R2Maj4}
We note that the asymptotic normality of $\tilde{T}_n^2(f)=\by_n\tps f(\bS_n)\by_n$, for differentiable $f$ can be considered as a parallel result to \cite[Theorem~2]{pan2011central}, which establishes the asymptotic normality of the closely related one-sample statistic \(\overline{\bx}_n\tps f(\bSn) \overline{\bx}_n / \Vert\overline{\bx}_n\Vert^2\) in the special case that  $\bfr_n=\mathbf{I}_p$ (and $f$ is analytic). 
Making this assumption on $\bfr_n$, the results differ primarily in that the one-sample statistic is a quadratic form in the sample mean, normalized by a sample mean factor, whereas $\tilde{T}^2_n(f)$ is a quadratic form in an un-normalized singleton observation. Because  the one-sample statistic evaluates a sample mean, the contribution of the kurtosis of $W$ vanishes asymptotically. 

\ Consequently, to compare the two results, we look at the kurtosis-independent terms of the asymptotic variances. The  variance of the one-sample statistic is shown in \cite{pan2011central} to approach $(2/\asp)(\int f^2\, d\umu - (\int f\,d\umu)^2)$, whereas the kurtosis-independent term of $\mathrm{var}(\tilde{T}_n^2(f))$ in \eqref{eq:null-variance} is structurally similar, though scaled differently and lacking the $f$-dependent corrective term: $2\Vert\tilde{\bfr}_n\Vert_{HS}^2 = 2\mathrm{tr}(f(\bS_n)^2)$, which is asymptotic to $2p\int f^2\, d\umu$.
\end{rem}

Even without an exact estimate of size, we can obtain an asymptotic significance level that is independent of \(f_n\) using sub-Gaussian concentration bounds as follows.
The Hanson--Wright inequality \cite{rudelson2013hanson} implies that, almost surely for \(\tau > 0\),
\begin{equation} \label{eq:HWnull}
    \begin{split}
         \Pr\left[ \lvert \bzn\tps \shrinker\bzn -\meann p \rvert > \mythresh \mid \bXn, \fH_0 \right]
        \le 2\exp\left[-c\min\left(\frac{\mythresh^2}{C^4(\sign)^2 p}, \frac{\mythresh}{C^2\lv \sigtil \rv} \right)\right]
    \end{split}
\end{equation}
for a known absolute constant \(c > 0\) and some constant \(C\) depending only on the moments of \(\myW\). Scaling \(\mythresh\) by \(\sign \sqrt{p}\), this upper bound can naturally be used to set a nontrivial asymptotic significance level, since the second term in the minimum then goes to infinity like \(\sqrt{p}\), almost surely. By the definition of \(\Zorign\), then, we have:
\begin{equation*} \label{eq:significance}
\Pr\left[ \Zorign  > \mythresh \mid \bXn, \fH_0 \right] \le 2\exp(-c\mythresh^2/C^4) + \oas(1).
\end{equation*}
Taking the expectation with respect to the distribution of \(\bXn\) and using Lemma~\ref{thm:empirical-standardization} gives the following main result for this subsection.
\begin{thm} \label{thm:significance}
    Assume \emph{[\textsc{Train}]}, that \(f_n\) is regular, and that \(c\) and \(C\) are the constants defined above.  Then \(\Zanalyticn\) is asymptotically normal with zero mean, and
    \begin{equation*}
        \Pr\left[ \Zanalyticn  > \mythresh \mid \fH_0 \right] \le 2\exp(-c\mythresh^2/C^4) + o(1)
    \end{equation*}
    as \(n,p\to\infty\).
\end{thm}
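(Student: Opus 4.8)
The plan is to derive Theorem~\ref{thm:significance} as a short corollary of the conditional bound \eqref{eq:significance} and the asymptotic equivalence $\Zanalyticn - \Zorign \toprob 0$ furnished by Lemma~\ref{thm:empirical-standardization}, since the substantive analysis has already been carried out in that lemma and in Lemma~\ref{lem:det-lim-sigma} and Theorems~\ref{thm:local-lp} and \ref{thm:local-lw}; what remains is two limiting arguments. First I would pass from \eqref{eq:significance} to an unconditional bound on $\Zorign$. Setting $\rho_n := \bigl(\Pr[\Zorign > \mythresh \mid \bXn, \fH_0] - 2\exp(-c\mythresh^2/C^4)\bigr)^{+}$, Equation~\eqref{eq:significance} gives $\rho_n \to 0$ almost surely, while $0 \le \rho_n \le 1$ because a conditional probability takes values in $[0,1]$; taking expectations over the distribution of $\bXn$ and invoking bounded convergence to conclude $\bbE[\rho_n] = o(1)$ then yields
\[
\Pr\left[\Zorign > \mythresh \mid \fH_0\right] \le 2\exp(-c\mythresh^2/C^4) + o(1)
\]
as $n,p\to\infty$.

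Next I would transfer this bound to $\Zanalyticn$. By Lemma~\ref{thm:empirical-standardization}, under $\fH_0$ the difference $\Zanalyticn - \Zorign$ converges in distribution to $\delta_0$, hence in probability to $0$. For any $0 < \epsilon < \mythresh$, the inclusion $\{\Zanalyticn > \mythresh\} \subseteq \{\Zorign > \mythresh - \epsilon\} \cup \{|\Zanalyticn - \Zorign| > \epsilon\}$ gives
\[
\Pr\left[\Zanalyticn > \mythresh \mid \fH_0\right] \le \Pr\left[\Zorign > \mythresh - \epsilon \mid \fH_0\right] + \Pr\left[|\Zanalyticn - \Zorign| > \epsilon \mid \fH_0\right],
\]
and I would bound the first summand by the previous display with $\mythresh-\epsilon$ in place of $\mythresh$, bound the second by $o(1)$, take $\limsup_{n\to\infty}$, and then let $\epsilon \downarrow 0$, using continuity of $t\mapsto 2\exp(-ct^2/C^4)$, to conclude
\[
\limsup_{n\to\infty}\Pr\left[\Zanalyticn > \mythresh \mid \fH_0\right] \le 2\exp(-c\mythresh^2/C^4),
\]
which is exactly the claimed inequality (informative precisely when $2\exp(-c\mythresh^2/C^4) < 1$).

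Given the lemmas invoked, neither limiting step is a genuine obstacle; the only points deserving care are the interchange of expectation and limit in the first step---which is why I would route it through bounded convergence rather than asserting outright that the expectation of an $o_{a.s.}(1)$ term is $o(1)$---and, more substantively, the fact recorded inside \eqref{eq:significance} that the ``$\sqrt p$'' term $\mythresh\sign\sqrt p/(C^2\lv\bfrn\shrinker\rv)$ eventually dominates $\mythresh^2/C^4$ in the Hanson--Wright minimum. The latter requires $\lv\bfrn\shrinker\rv$ to be almost surely bounded (which follows from regularity of $f_n$, hence uniform boundedness of $\shrinker$, together with the eventual boundedness of the spectrum of $\bfrn$ from \Hfour{}) and $\sign\sqrt p/\lv\bfrn\shrinker\rv \to \infty$ almost surely; the latter ratio is insensitive to the overall scale of $f_n$, and a direct estimate using the lower bound on the spectrum of $\bfrn$ and the derivative constraints $|f_n'|\prec n^{1/3}$, $|f_n''|\prec n^{2/3}$ built into the definition of regular shows it diverges (and indeed $\sign$ is typically bounded away from $0$ by Lemma~\ref{lem:det-lim-sigma} whenever $\mysigma(f_n,f_n)$ stays bounded below). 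Once that point is secured, the theorem follows by the bookkeeping above.
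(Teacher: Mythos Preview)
Your proposal is correct and follows essentially the same approach as the paper: take the expectation of the conditional bound \eqref{eq:significance} over $\bXn$ and then transfer from $\Zorign$ to $\Zanalyticn$ via Lemma~\ref{thm:empirical-standardization}. The paper compresses both steps into a single sentence, whereas you spell out the bounded-convergence justification for the first and the $\epsilon$-argument for the second; your additional remarks on the Hanson--Wright minimum are also well taken but concern the derivation of \eqref{eq:significance} itself, which the paper likewise treats informally just above the theorem.
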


\blue{
The asymptotic significance level in Theorem~\ref{thm:significance}  holds for any component-wise sub-Gaussian tail behavior, and can be used whenever one can find an upper bound on the sub-Gaussian constant $C^4$.  If, on the other hand, one cannot bound $C^4$, a natural question is whether it is still possible to control the asymptotic size.  To that end, observe that the exact variance of the unscaled quadratic form $\bz_n\tps\tilde{\bfr}_n\bz_n$ is given by
\begin{equation*} \label{eq:null-variance}
 \exkurt \sum_{i=1}^p (\tilde{\bfr}_n)_{ii}^2 + 2\left\Vert\tilde{\bfr}_n\right\Vert_{HS}^2,
\end{equation*}
where $\exkurt$ is the excess kurtosis, and the matrix elements $(\tilde{\bfr}_n)_{ij}$ are evaluated with respect to the basis in which noise components are independent.
Because the summation multiplying $\exkurt$ depends on the unknown basis of independence, it is generally unobservable. However, one can construct an asymptotic upper bound for the variance in the inset
above
by bounding the summation and estimating $\exkurt$. 
Fortunately, a procedure for consistently estimating $\exkurt$ in a general high-dimensional independent-components model can be found in \cite{lopes2019bootstrapping}. We provide a derivation of the resulting data-driven asymptotic significance level in Section~S.6
of the Supplementary Material.
}

\begin{figure}[htbp]
    \centering
    \begin{minipage}[b]{0.48\columnwidth}
        \centering
        \includegraphics[width=\linewidth]{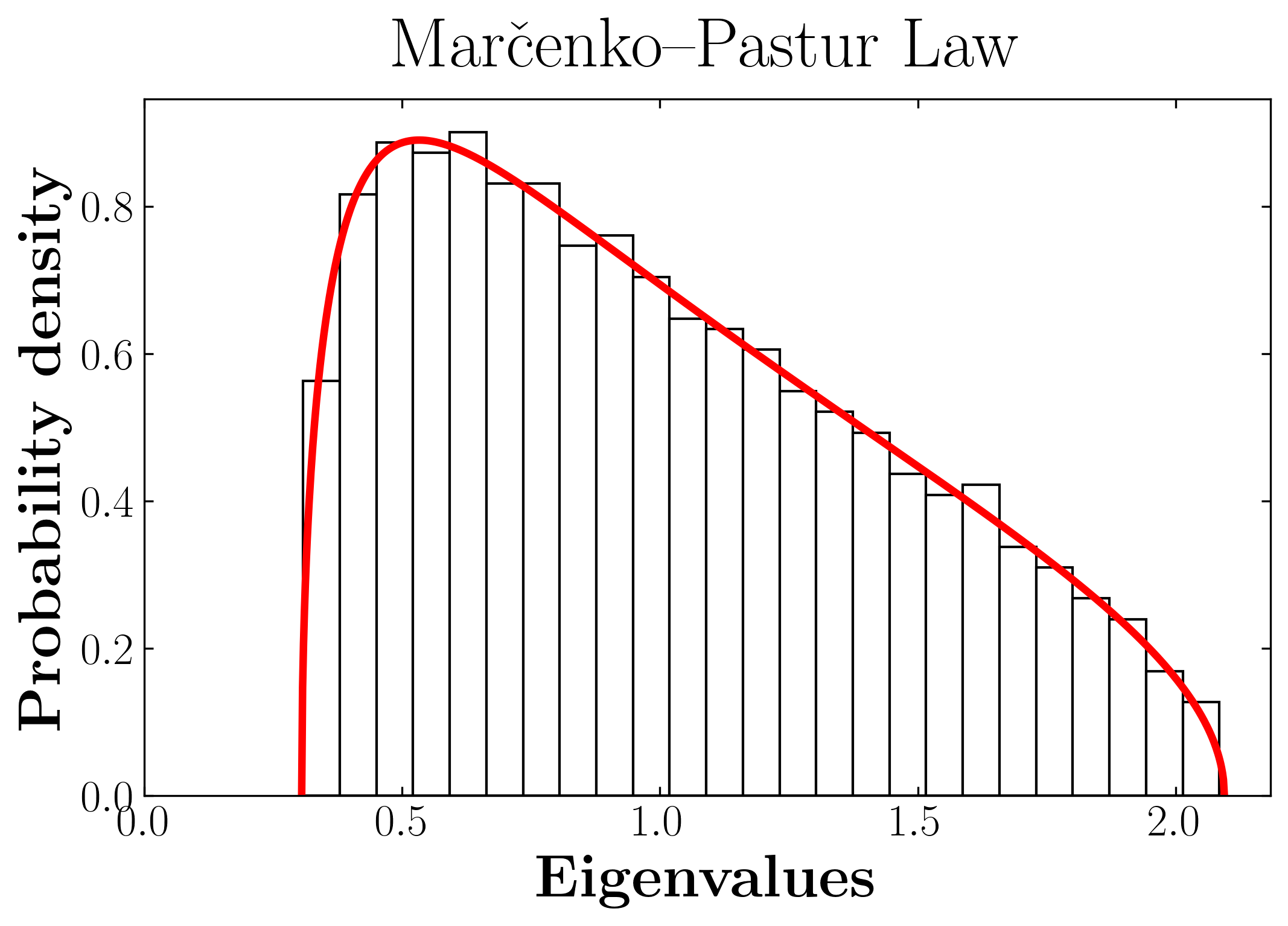}
        \caption{\MP{} density \(\mywx = d\umu(x)/dx\) (red) for \(p/n = 1/5\) and \(\upopsm = \delta_1\) versus histogram of eigenvalues of $\bSn$ for \(n=5000\), \(p=1000\), and \(\bfrn = \bI_p\), and Gaussian data. Due the error bounds of roughly \(1/n\) in the small-scale law of Theorem~\ref{thm:local-mp}, a fairly high-resolution histogram matches the well-known density of \(\mywx\) reasonably well.}
        \label{fig:mp}
    \end{minipage}\hfill
    \begin{minipage}[b]{0.48\columnwidth}
        \centering
        \includegraphics[width=\linewidth]{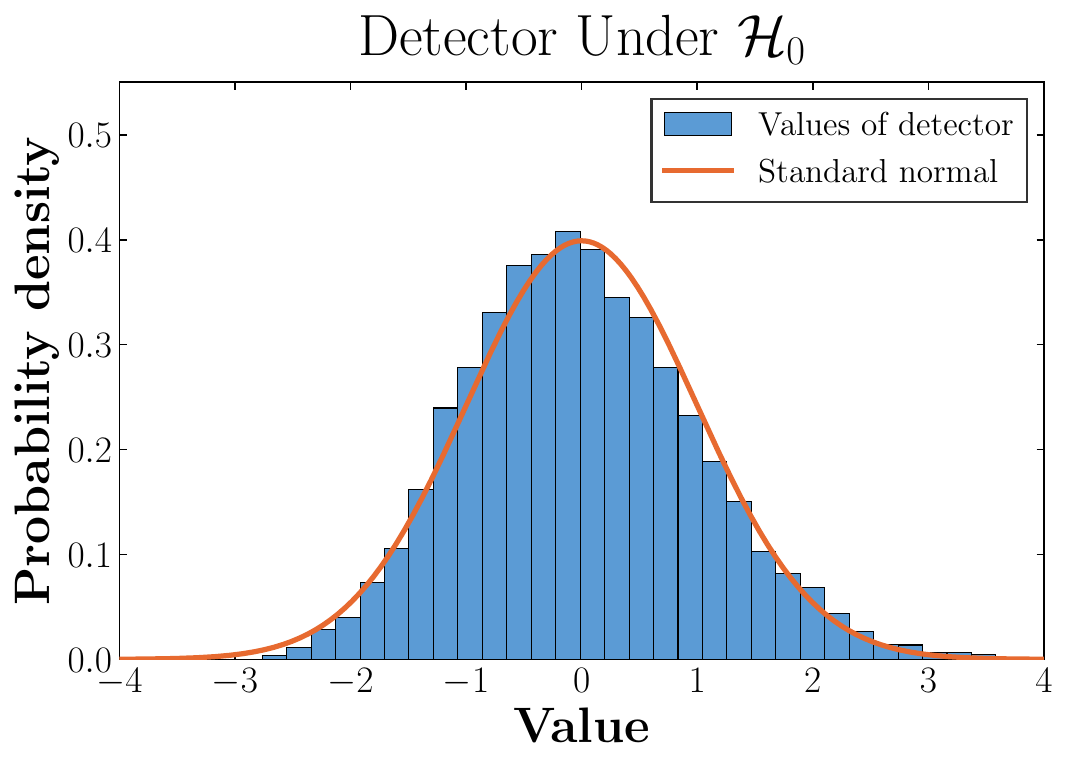}
        \caption{A plot showing \(\Zanalyticn\) of \eqref{eq:Zon} to be roughly standard normal when both the test datum \(\by\) and the reference sample \(\{\bx_i\}_{i=1}^{400}\) are a coloring matrix times a vector of standard Gaussian random variables. Here, the dimension is \(200\); $\bfr_n$ is the sum of a matrix with \(U(0,1)\) eigenvalues and rank-40 matrix with eigenvalues \(10^{(40-j)/10}\) for \(j=0, ... 39\); and $f_n(x)\equiv 1/(x+1)$.}
        \label{fig:standard}
    \end{minipage}
\end{figure}

    
    
    

\subsection{Detection power and optimal shrinkage} \label{sec:detection-power}

Assume again that \(f_n\) is a regular sequence.  Further, assume now that \(\fH_1\) is in force and consider the detection rate.  
By expanding quadratic forms, the asymptotic variance of \(\myTsquared\) conditioned on \(\bXn\) and \(\bmun\) is given by
\begin{align} \label{eq:alt-var}
& \exkurt\sum_i (\tilde{\bfr}_n)_{ii}^2 + 2\left\Vert \tilde{\bfr}_n\right\Vert_{HS}^2 + 4\bmun\tps \shrinker \bfr\shrinker \bmun.
\end{align}
Since large values of \(\lv \bmun\rv_2\) would trivialize the hypothesis test, we assume for now that \(\lv \bmun \rv_2 = o_{P}(\sqrt{p})\), which allows us to ignore the term containing \(\bmun\) above.  Thus, \(\myTsquared\) has the same conditional asymptotic variance under \(\fH_1\) and \(\fH_0\).  Arguing similarly, define
\begin{equation} \label{eq:fUn}
\fUn \equiv \fUn(f_n):=\frac{\bmun\tps \shrinker\bmun}{\tsign \sqrt{p}},
\end{equation}
where \(\tsign = \tsign(f_n)\), as defined in Lemma~\ref{thm:empirical-standardization}.
The difference between \(\Zanalyticn\) under \(\fH_1\) and \(\fH_0\) is given by
\begin{equation*} 
\frac{2\bzn\tps\bfrhalfn \shrinker\bmun}{\tsign \sqrt{p}}+\fUn,
\end{equation*}
and the linear term in \(\bz_n\) can be neglected due to the asymptotic assumption on \(\lv \bmun\rv_2\) and another application of Hanson--Wright.  
Thus, \(\Zanalyticn-\fUn\) under \(\fH_1\) and \(\Zanalyticn\) under \(\fH_0\) almost surely have the same asymptotic distribution, conditioned on \(\bXn\) and \(\bmun\).  

Based on our analysis so far, if we consider only \(f_n\) for which the conditional null variance of \(\Zanalyticn\mid\bXn\) is asymptotically constant, the quantity \(\fUn(f_n)\) would be the detection criterion of the test---the quantity to maximize in order to maximize power.  In actuality, however, this variance may depend on \(f_n\), so identifying a tractable detection criterion is a challenge.  Still, \(\fUn\) can be seen as a reasonable surrogate detection criterion since Theorem~\ref{thm:significance} implies the variance of the limiting null distribution of \(\Zanalyticn\) is bounded independently of \(f_n\), in terms of \(c\) and \(C\) alone.  In other words, \(\fUn(f_n)\) controls the worst-case detection power over all sub-Gaussian tail behaviors, and choosing \(f_n\) to maximize it will be the main task of the remainder of this paper.

In addition to the qualitative argument above, we may quantitatively estimate this worst-case detection power by using Hanson--Wright again:
\begin{equation} \label{eq:pd-unobservable}
\begin{split}
    &  \Pr\left[ \Zanalyticn > \mythresh \mid \bXn, \bmun, \fH_1 \right]  
   \\
   & \ge 1- \Pr\left[ \lvert\fUn - \Zanalyticn \rvert \ge (\fUn - \mythresh)_+ \mid \bXn, \bmun, \fH_1 \right]
   \\
   & \ge 1- 2\exp\left(-c(\fUn -\mythresh)_+^2/C^4\right) + \oas(1).
\end{split}
\end{equation} 
Moreover, taking the expectation over the distributions of \(\bmun\) and \(\bXn\) gives
\begin{equation}
    \label{eq:pd-finalfinal}
    \Pr\left[ \Zanalyticn > \mythresh \mid \fH_1 \right] \ge 1- 2\bbE\exp\left(-c(\fUn -\mythresh)_+^2/C^4\right) + o(1).
\end{equation}
This result shows the expected monotonicity in \(\fUn\) for any fixed \(\tau\).  We add that in the special case that \(W\) is Gaussian, power is directly monotonic in \(\fUn\)---a fact that follows, as before, from Berry--Esseen.

Since the direction in \(\bbR^p\) of the signal \(\bmun\) is unknown, it is not possible to analyze the detection criterion
\(\fUn\) without additional information. 
To address this fact, one often assumes some sort of prior on \(\bmun\).  For example, in \cite{li2020adaptable}, the authors make the \textit{maximum-entropy} assumption \(\bmun \sim \mathcal{N}(0,\myOmega_n)\) for some specified dispersion matrix \(\myOmega_n\).  In particular, for convenience they assume \(\myOmega_n\) is some polynomial in \(\bfrn\), scaled by \(n^{1/2}\) to ensure that 
\(\lv\bmun\rv^2 \asymp n^{1/2}\) with high probability, which means 
\(\fUn\asymp 1\) with high probability, which means the hypothesis test is tractable but not trivially so in the limit.  Examples include \(p^{1/2}\mathbf{I}_p\) (ignorant prior) or \(p^{1/2}\bfrn\) (covariance-matched prior). In this paper, we allow for more general dispersion matrices, as follows.  
\begin{assumptiontest*} 
Let \(\bmun\) be a mean-zero random vector independent of \(\bzn\) for which \(\bbE[\byn\mid\bmun]=\bmun\) (a.s.), and  
let \(\bmun\) have the (maximum-entropy) distribution \( \mathcal{N}(0,\myOmega_n)\) for some known covariance \(\myOmega_n\) with \(\lv \myOmega_n \rv \asymp n^{1/2}\).  Further, assume there is a measure \(d\omegainfty(x) = h(x)\, dx \equiv \overline{h}(x)w(x)\, dx\) such that \(\overline{h}\) is \(C^2\) on \(F\), and assume that
for any regular shrinker \(f_n\) we have
\begin{equation} \label{eq:int-omega}
    p^{-3/2}\tr\left(\myOmega_n f_n(\bSn) \right) = \int f_n(x) d\omegainfty(x) + o_P(1).
\end{equation}
\end{assumptiontest*}
\noindent 
We note that the reasons the examples of \(\myOmega_n\) equal to \(p^{1/2}\mathbf{I}_p\) and \(p^{1/2}\bfrn\) satisfy \eqref{eq:int-omega} are the last small-scale laws in Theorems~\ref{thm:local-mp} and \ref{thm:local-lp}. We also note that the scaling factor of \(p^{1/2}\) is not necessary for our proposed shrinker to be asymptotically optimal but simply guarantees convergence of the test's power to a number in \((0,1)\): if a divergent scale is chosen, all relevant tests' power will converge to zero or one. Thus, our test will remain asymptotically optimal even in these trivial power regimes.

In the following theorem (Theorem~\ref{thm:deterministic-limit}), we will show that under this new assumption [\textsc{Test}], the detection criterion \(\fUn( f_n)\), like the variance \(\sign(f_n)\) from the previous subsection, can be approximated using a deterministic limit.  
Further, this deterministic limit can be optimized by solving a variational problem explicitly in terms of Hilbert transforms. In a moment, this solution  will  inform the approximate solution presented in  Theorem~\ref{thm:approx-fopt}.  

Recalling the definitions of \(g\) and \(G\) from \eqref{eq:gG}, our claims about the deterministic limit of \(\fUn(f_n)\) are as follows.

\begin{thm} \label{thm:deterministic-limit}
    Assume \emph{[\textsc{Train}]} and \emph{[\textsc{Test}]}.   
    Then 
    \begin{itemize}
        \item[(a)]  (Deterministic Limit.) Suppose \(f_n\) is regular. Then
    \[
     \fUn(f_n) = \fU(f_n) + \oas(1), 
    \]
    as \(n, p\to\infty\), where
    \[
    \fU(f) = \frac{\int f\, d\omegainfty}{\mysigma(f)}.
    \]

\item[(b)] (Optimal Limiting Shrinkage.)
    If \(H(x)\) is the Hilbert transform \(\htrans\omegafcn(x)\) and \(a(x) =  x\delta(x) w(x)\), then \(f \in L^2(\suppumu) \mapsto \fU(f)\) is maximized 
    by the function 
    \begin{equation} \label{eq:myfstar}
        f^* := \frac{\myg^2\omegafcn + \myg \myG H}{a} - \htrans\left[ \frac{\myG^2 H + \myG\myg \omegafcn}{a}\right].
    \end{equation}
    
    \end{itemize}
\end{thm}
\begin{proof}
    For part (a) we argue as follows.  
    Since Lemma~\ref{lem:det-lim-sigma} proves \(\tsign(f_n) = \mysigma(f_n) + \oas(1)\), it suffices to show that 
\begin{equation}
 \label{eq:numomega}
 p^{-1/2}\bmun\tps f_n(\bSn) \bmun = \int f_n\, d\omega_\infty + \oas(1).
\end{equation}
By Hanson--Wright, the left side is almost-surely equivalent to
\[
p^{-3/2}\tr (f_n(\bSn) \myOmega_n).
\]
Thus, part (a) follows from Assumption [\textsc{Test}].
    
    See Section~S.7
    of the Supplemental Material for part (b).
\end{proof}
\newcommand{\myhn}{\overline{h}_n}
\newcommand{\mygn}{g_n}
\newcommand{\myGn}{\overline{G}_n}

\begin{rem} \label{rem:sample-starved}
While the above theorem, like the majority of this paper, restricts to the case where $n$ eventually exceeds $p$, a regime of significant practical interest is the \textit{singular} case where $p > n$. In this case, the spectrum of $\bS_n$ is degenerate, having $p-n$ eigenvalues equal to zero, making finding a replacement for $\bfr_n^{-1}$ especially difficult. At a high level, this case requires applying a shrinkage function to the nonzero eigenvalues of $\bS_n$ and assigning a common positive number to every zero eigenvalue.  In practice, these two tasks turn out to be a significant challenge in the singular regime, as they are tightly coupled.  Nevertheless, we provide an outline for their solutions in Section~S.10
of the Supplemental Material, 
along with \texttt{python} code and some supporting simulations. 
\end{rem}

The main thing that remains is constructing a suitable approximation \(f_n\) to the optimizer \(\fopt\) of \(\fU\), which is accomplished by the following theorem.  We note that suitable choices for \(\myhn\) in the upcoming equation \eqref{eq:h-cond} are \(\lwEig\) if \(\myOmega_n = \bfrn\) (by Theorem~\ref{thm:lw}) and \(\myhn\equiv 1\) if \(\myOmega_n = \mathbf{I}_p\) (by the \MP{} Theorem).  In addition, the work of \cite{ledoit2011eigenvectors, ledoit2020analytical} can be used to show similar formulas hold if \(\myOmega_n\) is a polynomial in \(\bfrn\), though we do not pursue this direction for ease of exposition.

\begin{thm}
    \label{thm:approx-fopt}
    Assume \emph{[\textsc{Train}]} and \emph{[\textsc{Test}]}.  Suppose there are real-valued regular functions \(\myhn\) such that the following holds:
    \begin{equation} \label{eq:h-cond}
    p^{-1}\sum_{j=1}^p \left| \myhn(\lambda_{nj})-\overline{h}(\lambda_{nj})\right| = o_P(1)
    \end{equation}
    as \(n,p\to\infty\).
    Let \(\nasp = p/n\), let
    \[
   H_n(x) = p^{-1}\sum_{i=1}^p \overline{h}_n(\lamnind) K_{ni}(x-\lamnind),
    \] 
    and let
    \[
    \mygn(x) = 1-\nasp
    -\nasp\pi x\Hwtn(x) \qquad \text{and}\qquad \myGn(x) = -\nasp\pi x.
    \]
    Further, let
    \[
    \xi_n(x) = \frac{\mygn(x)^2\myhn(x)+\mygn(x)\myGn(x) H_n(x)}{\lwEig(x) x}
    \]
    for \(x > 0\), and
    \[
    \eta_{nj} = \frac{\myGtnx^2 H_n(\lambda_{nj}) + \myGtnx \mygtnx  \myhnj}{\lwEig(\lambda_{nj}) \lambda_{nj}}
    \]
    for \(j=1,2,\dots, p\). Finally, let \(f_n(x)\) be given by
    \begin{equation*}
       \xi_n(x) - p^{-1}\sum_{j=1}^p \eta_{nj} K_{nj}(x-\lambda_{nj}).
    \end{equation*}
    Then \(f_n\) is regular and satisfies
    \begin{equation*}
	\mathcal{U}_n(f_n) = \fU(f^*)  + o_P(1).
    \end{equation*}
   As a result, the lower bound on detection power in \eqref{eq:pd-finalfinal} is,  asymptotically in probability, (a) maximized by \(f_n\) and (b) equivalent  to
    \begin{equation} \label{eq:finalpd}
       1- 2\exp\left(-c(\tilde{\mathcal{U}}_n(f_n) -\mythresh)_+^2/C^4\right) + o_P(1),
    \end{equation}
    where
    \[
    \tilde{\mathcal{U}}_n(f) := \frac{p^{-1}\sum_{i=1}^p \overline{h}_n(\lamnind) f(\lamnind)}{\tsign(f) }.
    \]
    \end{thm}
    
\begin{proof}[Proof Sketch]
The vectors \(\xi_{ni}\) and \(\eta_{ni}\) correspond to the terms \((g^2\omegafcn+gG H)/a\) and \((G^2 H+Gg\omegafcn)/a\) of \eqref{eq:myfstar}, evaluated at \(\lambda_{ni}\).   See Section~S.8
of the Supplemental Material for details.  The expression in \eqref{eq:finalpd} arises from \eqref{eq:pd-unobservable} after taking expectations and in-probability limits, then using concentration of quadratic forms to substitute \(\tilde{\mathcal{U}}_n(f_n)\) for \(\fUn(f_n)\). \end{proof}

Note that although the formula above for \(f_n(x)\) can be evaluated at any \(x>0\), we need only evaluate the shrinkage eigenvalues---\textit{i.e.},  \((f_n(\lamnind))_{i=1}^p\).   Since each summation in \(f_n(x)\) naively takes \(O(p)\) time to evaluate, computing these \(p\) values takes \(O(p^2)\) time, which is insignificant compared to the \(O(p^3)\) time generally required to compute the eigenvectors of \(\bSn\).  If necessary, the desired shrinkage eigenvalues can be calculated even more quickly, albeit using \(O(p^2)\) memory, by computing the matrix \(K_{ni}(\lambda_{nj}-\lamnind)\) once and using fast matrix-vector multiplication to compute the summations appearing, for example, in  \((H_n(\lambda_{nj}))_{j=1}^p\) and \((\lwEig(\lamnind))_{i=1}^p\).  
See Listing~S.1
of the Supplemental Material for a \texttt{python} implementation of this idea.

\begin{rem}[Selection of $\myOmega_n$ and $\overline{h}_n$]
\label{rem:prior}
In realistic settings, exact knowledge of the signal dispersion matrix $\myOmega_n$, or its finite-sample spectral parameters $\overline{h}_n(\lamnind)$ may be unavailable.
  In such cases, it would be desirable to estimate $\overline{h}_n(\lamnind)$ directly from test data.  However, a singleton observation $\by_n$ is reflective only of the average eigenvalue of $\myOmega_n$,  and provides no further information about the eigen-decomposition of $\myOmega_n$.
 This limitation persists even if one makes strong structural assumptions, such as assuming that $\myOmega_n$ is a polynomial of fixed degree in $\bfr_n$ as in \cite[Section~2.3]{li2020adaptable}. A data-driven workaround is only practical when multiple i.i.d$.$ samples $\by_{ni}\sim \by_n$ are available, as we investigate in Section~S.11
 of the Supplemental Material.

As a result, in the regime of this paper where the test sample is a singleton, one must impose additional constraints on the hypothesis test in order to accurately specify the matrix parameter $\myOmega_n$. We provide three examples of how this could be done.
\begin{enumerate}
    \item[1.] A natural constraint is that $\by_n$ should maximize Shannon entropy subject to its squared norm, which by Hanson--Wright is almost surely asymptotic to $\mathrm{tr}(\myOmega_n+\bfr_n)$.  Since this trace approximates $\mathrm{tr}(\myOmega_n)$ by the scaling regime of [\textsc{Test}], we may assume $\lv \by_n\rv^2$ is asymptotic to $\mathrm{tr}(\myOmega_n)$.  Subject to this fixed-power constraint, entropy maximization gives the \textit{isotropic prior} matrix $\myOmega_n \propto \mathbf{I}_p$.  
    \item[2.] A more pessimistic approach is to assume the anomaly is well-masked by the background noise.  One way to ensure this is to assume $\myOmega_n \propto \bfr_n$---a challenging setting in which there is no ``easy'' direction for anomaly detection and which we refer to as the \textit{covariance-matched prior} assumption.  
    \item[3.] A further potential approach would be to assign an uninformative prior to a signal's source location based on geometric constraints.  For example, in distributed sensing 
    one might model a source
    to be uniformly distributed along a line-segment path based on prior knowledge. 
    In this case
    the signal dispersion matrix can be estimated arbitrarily well using simulated samples generated from the positional prior.
\end{enumerate}
\noindent A possible concern is how sensitive priors such as the isotropic and covariance-matched priors are to misspecification.
To address this, we evaluate 
our method's size-adjusted empirical power when our algorithm is forced to assume that the signal dispersion matrix $\myOmega_n$ is $\propto \mathbf{I}_p$ even though the true data-generating matrix is $\propto \bfr_n$ in Figure~\ref{fig:isofalse_subg}.  (Further, some simulations displaying the reverse misspecification can be found in Sections~S.9
and 
S.10
of the Supplemental Appendix.)
As demonstrated in those simulations, our method empirically outperforms other methods when the prior is specified correctly, and remains highly robust even under prior mismatch.

\end{rem}


\section{Empirical results} \label{sec:empirics}

In this section, we evaluate the proposed shrinkage algorithm using both synthetic data and the real-world \textsc{Crawdad} Umich/RSS data set \cite{c7r30h-22}.   
In the synthetic experiments, we choose the population covariance matrix and generate simulated data satisfying the moment conditions in [\textsc{Train}] and [\textsc{Test}].   In the real-world set, a sensor network collects a time series of received signal strengths during periods of activity and inactivity in a lab setting.  In this case, the population covariance matrix and data model are, of course, not known in the real-world set, but the truth information of when people entered and exited the lab is.

In the simulated environment, we find that our method outperforms or nearly matches all tested competitors, which we will describe below---sometimes by significant margins.  For the \textsc{Crawdad} data, we imagine that there are significant dependencies and inhomogeneities between the samples, making the effective number of independent samples different from the number of reference samples and weakening some of the detectors.  Nevertheless, our detector performs as well as the best competitor in our tests.

\subsection{Competing algorithms}
Before we discuss our empirical results further,
we briefly describe each competing method and highlight its limitations relative to the proposed algorithm.

\subsubsection{\bsalg{} and \cqalg{}}

In the context of broader two-sample testing, where two samples have mean \(\ox_1\) and \(\ox_2\),
Bai and Saranadasa \cite{bai1996effect} suggest replacing Hotelling's \(T^2\) with \(\lv \ox_1 - \ox_2\rv^2\), effectively using the trivial shrinkage estimator of \(f(\bSn) = \mathbf{I}_p\), under the ``condition-number-type'' constraint that $\left\Vert \bfrn \right\Vert^2 \ll \tr\left(\bfrn^2\right)$ holds. 

Similar to \bsalg{}, but also suitable to the case of \(p \gg n\), Chen and Qin \cite{chen2010two} propose a test that is asymptotically optimal given a similar constraint on the condition number of \(\bfrn\).  To be precise, the covariance matrix is required to satisfy the condition-number-type constraint $\tr(\bfrn^4) \ll \tr^2(\bfrn^2)$.  Their test statistic for samples \(\{\bx_{1i}\}_{i=1}^{n_1}\) and \(\{\bx_{2i}\}_{i=1}^{n_2}\) is
\[
\frac{\sum_{i\ne j}^{n_1} \bx_{1i}'\bx_{1j}}{n_1(n_1-1)} + \frac{\sum_{i\ne j}^{n_2} \bx_{2i}'\bx_{2j}}{n_2(n_2-1)} - \frac{2\sum_{i=1}^{n_1}\sum_{j=1}^{n_2} \bx_{1i}'\bx_{2j} }{n_1 n_2},
\]
Although this test statistic, \bsalg{}, and several other algorithms we describe are applicable to values of \(n_1\) and \(n_2\) that both differ from 1, we will always assume that \(n_2=1\) for the sake of comparison with ours.
In our simulations, we do not test the ultra-high-dimensional regime, apparently making \cqalg{} perform equivalently to \bsalg{}.  As a result, in the plots that follow, we will show only \cqalg{}, to represent both algorithms.  

\textbf{Limitation}: These algorithms are designed for covariance matrices with condition numbers satisfying growth constraints, and tend to degrade in performance for ill-conditioned ones.

\textbf{Knowledge of $\bfr_n$ and $\myOmega_n$ used}: It is assumed, in our setting, that $\bfr_n$ and $\myOmega_n$ can be replaced in the limit by $c p^{1/2} \mathbf{I}_p$ without any loss of asymptotic power, for some constant $c$.  (For test samples of size order-$n$, the factor of $p^{1/2}$ can be dropped.)

\subsubsection{\lalg{}} 
Li et al. \cite{li2020adaptable} assume that $\bfrn$ is a finite-rank perturbation of the identity matrix---the well-known spiked covariance model of Johnstone \cite{johnstone2001distribution}.  
In  \lalg{}, the ``pooled'' sample covariance matrix $\bSn$ (asymptotically equivalent to our \(\bSn\) when \(n_2=1\)) is replaced by a linear shrinkage estimator of the form $\bSn+b\bI_p$, for some data-dependent \(b > 0\). The scalar \(b\) is chosen to locally maximize an asymptotic detection criterion similar to ours, where the \blue{signal} dispersion matrix \(\myOmega_n\) 
is \(\mathbf{I}_p, \bfrn, \bfrn^2\), with various prior probabilities.  (Exponents higher than two can be incorporated, as well, but are not addressed explicitly.)  The proposed method of maximization is to use a grid search from \(p^{-1}\tr(\bSn)\) to \(20\lv \bSn\rv\) to maximize a spiked analogue of \(\fU(1/f)\) for $f(x)=x+b$. 

In our implementation, we only consider \blue{signal} dispersion matrices of \(\mathbf{I}_p\) and \(\bfrn\), and we perform a log-scale grid search consisting of \(100\) points over the suggested interval.

\textbf{Limitation}: This algorithm tends to degrade in performance when the spectrum of $\bfr$ is highly complex, consisting of many significant eigenvalues, rather than conforming to the low-rank-plus-noise model.  Further, only partial guidance is provided on how to implement the grid search for $b$, and overly fine grid searches appear to yield spurious optima in moderate dimensions such as 200. 

\textbf{Knowledge of $\bfr_n$ and $\myOmega_n$ used}: The matrix $\bfr_n$ is assumed to be a finite-rank matrix plus the identity (spiked).  The signal dispersion matrix is assumed to belong to a set of matrices of the form $w_0 \mathbf{I}_p + w_1\bfr_n + w_2 \bfr_n^2$ for some constrained set of weights $w_j$.  (Canonically, the weights belong to $\{(1,0,0), (0,1,0), (0,0,1)\}$.)  In our simulations, we will make the generous assumption that the correct weights are  known exactly, and we will only consider the cases of $\myOmega_n = \mathbf{I}_p$ or $\bfr_n$. 

\subsubsection{\lwalg{}}
Based on Ledoit and Wolf's work in \cite{ledoit2022quadratic}, we choose to replace the sample covariance matrix with the \textit{nonlinear} ``quadratic inverse shrinkage'' estimator using code provided by the authors in their supplemental material.   
\blue{
Motivated by unifying their earlier shrinkage work with 
the pioneering covariance-estimation work by Stein 
\cite{stein1975estimation},
\lwalg{} refines the earlier works by grounding both in a rigorous convolution-kernel based approach. 
The derived estimator was later proven to order-minimize the Frobenius distance from the equivariant oracle estimator \cite{lin2026eigenvector}, making it not only first-order but second-order asymptotically optimal.
}
The algebraic form of their covariance estimator differs slightly from our Theorem~\ref{thm:local-lw}, but both satisfy the conclusions of Theorem~\ref{thm:lw}, differing mainly in kernel choice.  

\textbf{Limitation}: This estimator is designed to optimize several objective functions, such as Frobenius loss, ``MV loss,'' and inverse Stein's loss.  MV loss is essentially the reciprocal of the following \cite{ledoit2017nonlinear}:
\begin{equation} \label{eq:mv-loss}
\mathcal{U}_{n,MV}(f_n):=\frac{\tr(f_n(\bSn))}{\sqrt{\tr( f_n(\bSn) \bfr_n f_n(\bSn))}}.
\end{equation}
This differs by a factor of \(\bfr\) in the denominator from our \(\mathcal{U}_n(\mydot)\), meaning that that our estimator may differ from \lwalg{} when \(\bfr\) is far from the identity matrix (or a scaling thereof).

\textbf{Knowledge of $\bfr_n$ and $\myOmega_n$ used}: Essentially none.  The matrix $\bfr_n$ need only have limiting spectrum supported on a finite union of intervals (in particular, true for spiked models), and $\myOmega_n$ is not assumed to be known.  


\subsubsection{\cwhalg{}}
Chen et al. \cite{chen2011robust} apply their detector in a similar context to ours, where two samples are tested but one sample is just a singleton.  However, their data model generalizes the Gaussian distribution differently, assuming a spherically-invariant (elliptical) model rather than the separable-covariates model we adopt.  The \MP{} Theorem happens to be valid in both settings, so it would not be a surprise if their work extends to our model and vice versa.

Fundamentally, \cwhalg{} first extends the well-known Tyler M-estimator \cite{tyler1987distribution} into Hotelling's \(T^2\) to the sample-starved case of \(n < p\), by formulating and iteratively solving a fixed-point equation.  Thus, in our plots, \cwhalg{} effectively reduces to Tyler's estimator until we consider the case of $p > n$ in Section~S.10
of the Supplemental Material.  The resulting estimator is then substituted for \(\bSn\) in Hotelling's \(T^2\).
This linear shrinkage is shown to outperform Ledoit and Wolf's \textit{linear} shrinkage algorithm \cite{ledoit2004well}, for example, on a supervised detection task using the \textsc{Crawdad} data set.  

\textbf{Limitation}: This is another form of linear shrinkage and is thus relatively inexpressive, having only one degree of freedom.  By contrast,  nonlinear shrinkage has $p$ degrees of freedom.

\textbf{Knowledge of $\bfr_n$ and $\myOmega_n$ used}: None. This algorithm is based solely on maximum-likelihood and least-squares computations, and it does not require or use knowledge of $\myOmega_n$.

\subsubsection{Proposed algorithm}

The proposed algorithm is essentially just to use the approximation to \(\fopt\) described in Theorem~\ref{thm:approx-fopt}. The only modification we make is to replace every shrunken eigenvalue by its positive part (maximum with zero).  Since  \(\fopt\) is asymptotically non-negative, this modification does not affect asymptotic consistency and can only accelerate convergence.

\textbf{Knowledge of $\bfr_n$ and $\myOmega_n$ used}: The matrix $\bfr_n$ is assumed to follow [\textsc{Train}].  The signal dispersion matrix is assumed to be known.  We denote this method by SRHT($\myOmega_n$) when the signal-dispersion matrix is assumed to be proportional to $\myOmega_n$.  For example, we will write SRHT($\mathbf{I}_p$) or SRHT($\bfr_n$) if $\myOmega_n$ is proportional to $\mathbf{I}_p$ or $\bfr_n$, often omitting the subscripts of $n$ and $p$. 

\subsection{Results for synthetic data} 
\label{sec:synthetic}

For our first series of experiments, we tested our detector on artificially generated reference and test data sets in \texttt{python}.  The data generation process was similar to that in \cite{robinson2022improvement}. 
 First, let \(p=200\), choose a \blue{signal} dispersion matrix \(\myOmega\) and non-centrality scalar \(\magnitude\), choose the number of reference samples \(n,\) and generate a \(p\times p\) covariance matrix \(\bfr\) with piece-wise log-linear eigenvalues \(\{\kappa^{ i/40}\}_{i=1}^{40}\cup\{10^{ (i-1)/(40(p-41))}\}_{i=1}^{p-40}\) for some rough condition number \(\kappa = \lambda_{\max}(\bfr)/\lambda_{\min}(\bfr)\), not to be confused with a distance to a spectral edge.  (See Figure~\ref{fig:scree1e2} 
 for a scree plot with \(\kappa = 10^2\) and Remark~\ref{rem:spiked} for a comment on the smaller \(p-40\) eigenvalues.)
If $\kappa =1$, we take $\bfr$ simply to be $\mathbf{I}_p$. Next, perform 3,000 Monte Carlo trials  as follows:
\begin{itemize}
    \item Generate an i.i.d. \(p\times n\) matrix \(\bZ\) with 
    \blue{components $\sim W$ and two possible choices for the tail behavior of $W$: $W\sim U(-\sqrt{3},\sqrt{3})$ (sub-Gaussian case), or $W\sim t_\nu$ (Student-$t$ case) for $\nu>2$, normalized to have variance one. Left-multiply $\bZ$ by a random orthogonal matrix with rotation-invariant (Haar) distribution, and} 
    let \(\bX = \bfrhalf \bZ\), the reference-sample matrix.
    \item Generate a large number test observations, some signal-free ones distributed as the columns of \(\bX\), and some signal-containing ones, which are signal-free observations added to \(\magnitude\bz/\lv \bz\rv\), where \(\bz \sim \mathcal{N}(0,\myOmega)\).
    \item Compute \(\bSn\) from \(\bXn\) as in \eqref{eq:scm} and generate detection scores for each algorithm in the last section and each test observation.
\end{itemize}
Once detection scores have been generated for all 3,000 trials, they are assembled into approximate receiver-operator-characteristic (ROC) curves of size (false-alarm rate) versus power (detection rate) by counting the number of \(\fH_0\) and \(\fH_1\) observations that exceed each of a large number of thresholds.

\begin{figure}[htbp]
\centering
\includegraphics[width=0.85\columnwidth]{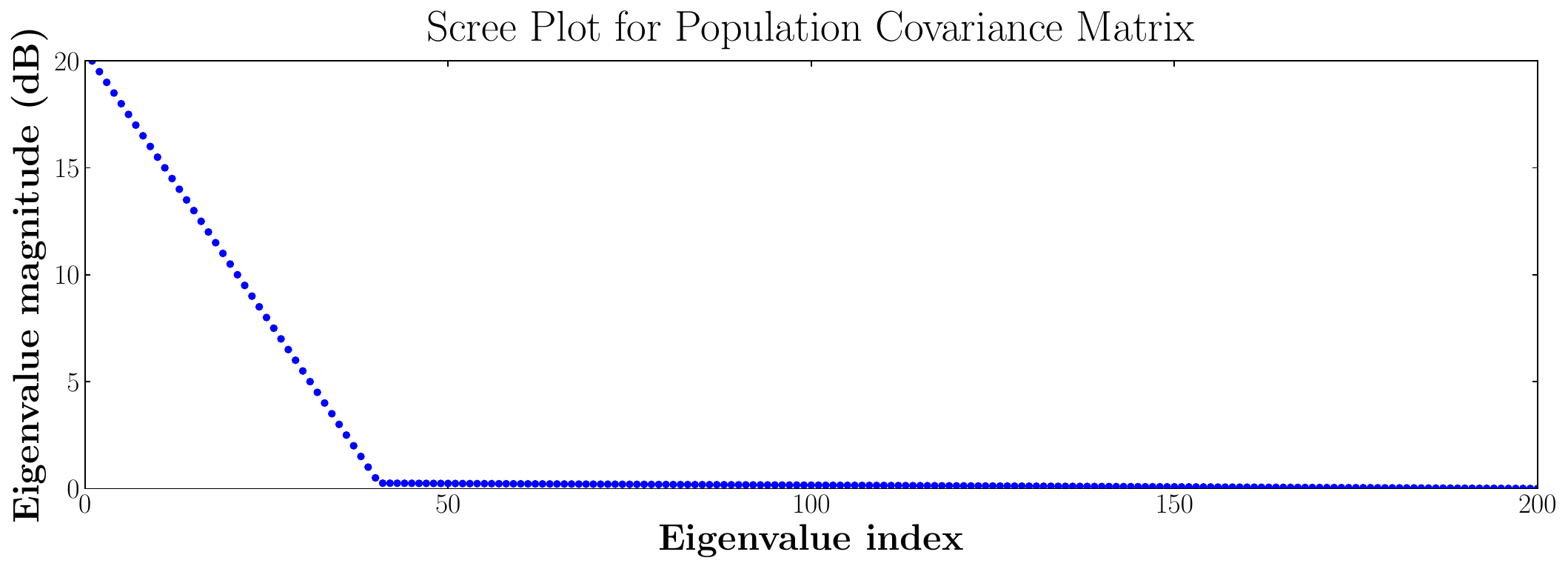}
\caption{Scree plot of population covariance matrix chosen to generate artificial data, \(\kappa=10^2\).
}
\label{fig:scree1e2}
\end{figure}


Some of the results for the case of an isotropic signal-dispersion matrix are shown in a series of plots in Figure~\ref{fig:isotrue_subg}.  Specifically, the case where the data-generating matrix $\myOmega$ is known to be proportional to the identity matrix is investigated.  Figure~\ref{fig:isotrue_subg} demonstrates our method's robustness to spectral behavior for sub-Gaussian data, showing performance as \(\kappa=\kappa(\bfr)\) ranges over \((10^0, 10^2, 10^4)\).
Further, Figures~S.2, S.3, S.6, and S.7
of the Supplemental Material demonstrates our method's robustness to heavy-tailed behavior, showing similar qualitative performance when $W$ is proportional to $t_8$-, $t_6$-, and $t_4$-distributed random variables. 

Figure~\ref{fig:isofalse_subg} demonstrates our method's additional robustness to misspecification of $\myOmega$.  Specifically, the effect of incorrectly assuming $\myOmega \propto \mathbf{I}$ when in fact $\myOmega\propto \bfr$ is investigated.    Results for both the incorrect specification and the correct specification are plotted for the case of sub-Gaussian data, as $\kappa$ ranges over ($10^1, 10^2, 10^4$).  (The case of $\kappa = 10^0$ matches the one in Figure~\ref{fig:isotrue_subg} and is suppressed to conserve space.) 

\begin{figure}[htbp]
    \centering
    {\large\textbf{Robustness to the Spectral Complexity of $\bfr$}}\\[1ex]
        \includegraphics[width=\linewidth]{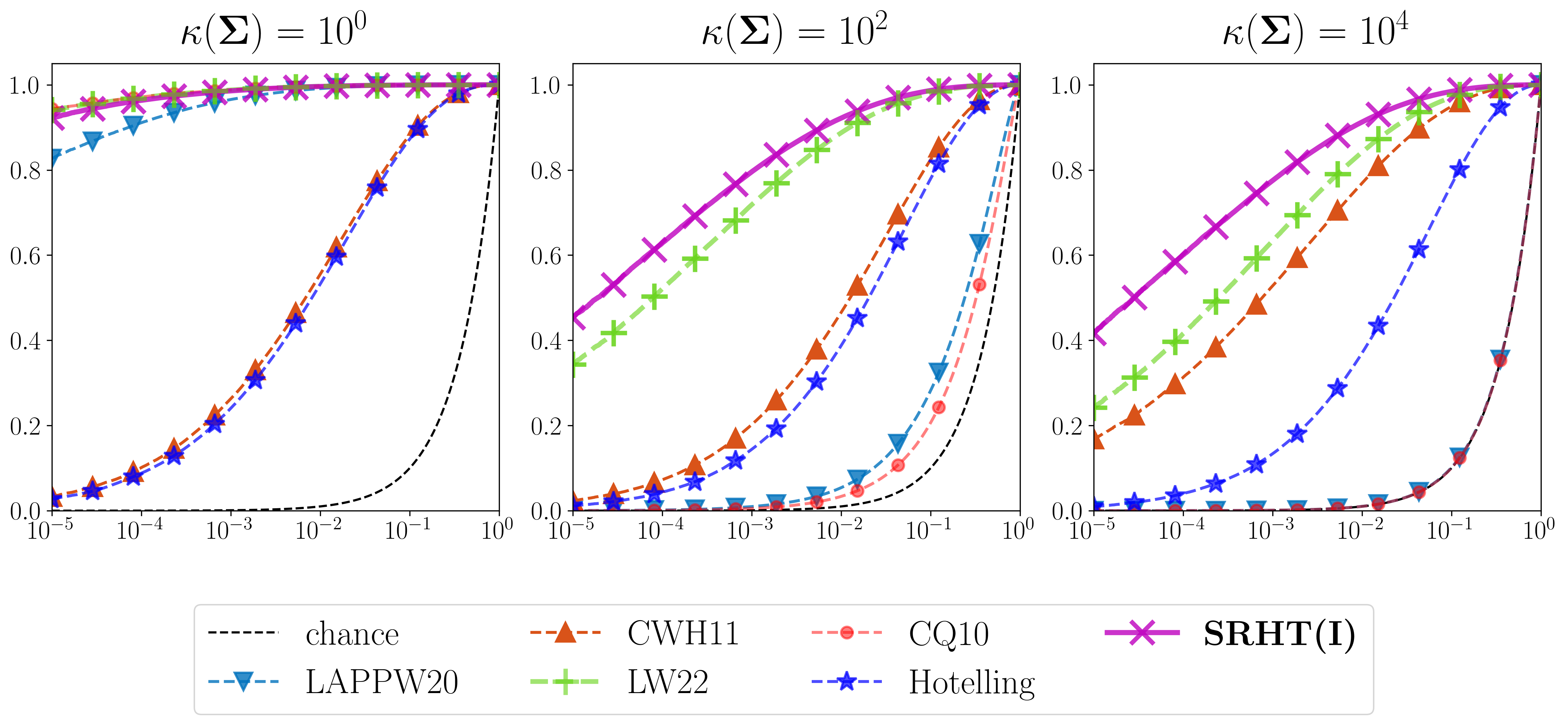}
        \caption{
        Size-adjusted empirical power of several methods, where $p=200$, $n=300$, the data are sub-Gaussian, and the true signal-dispersion matrix $\myOmega\propto\mathbf{I}$.
        The proposed method SRHT($\mathbf{I}$) increasingly outperforms the competition as the spectral complexity of $\bfr$ increases.  
        }
    \label{fig:isotrue_subg}
\end{figure}

\begin{figure}[htbp]
    \centering
    {\large\textbf{Performance Under Covariance-Matched Prior}}\\[1ex]
        \includegraphics[width=\linewidth]{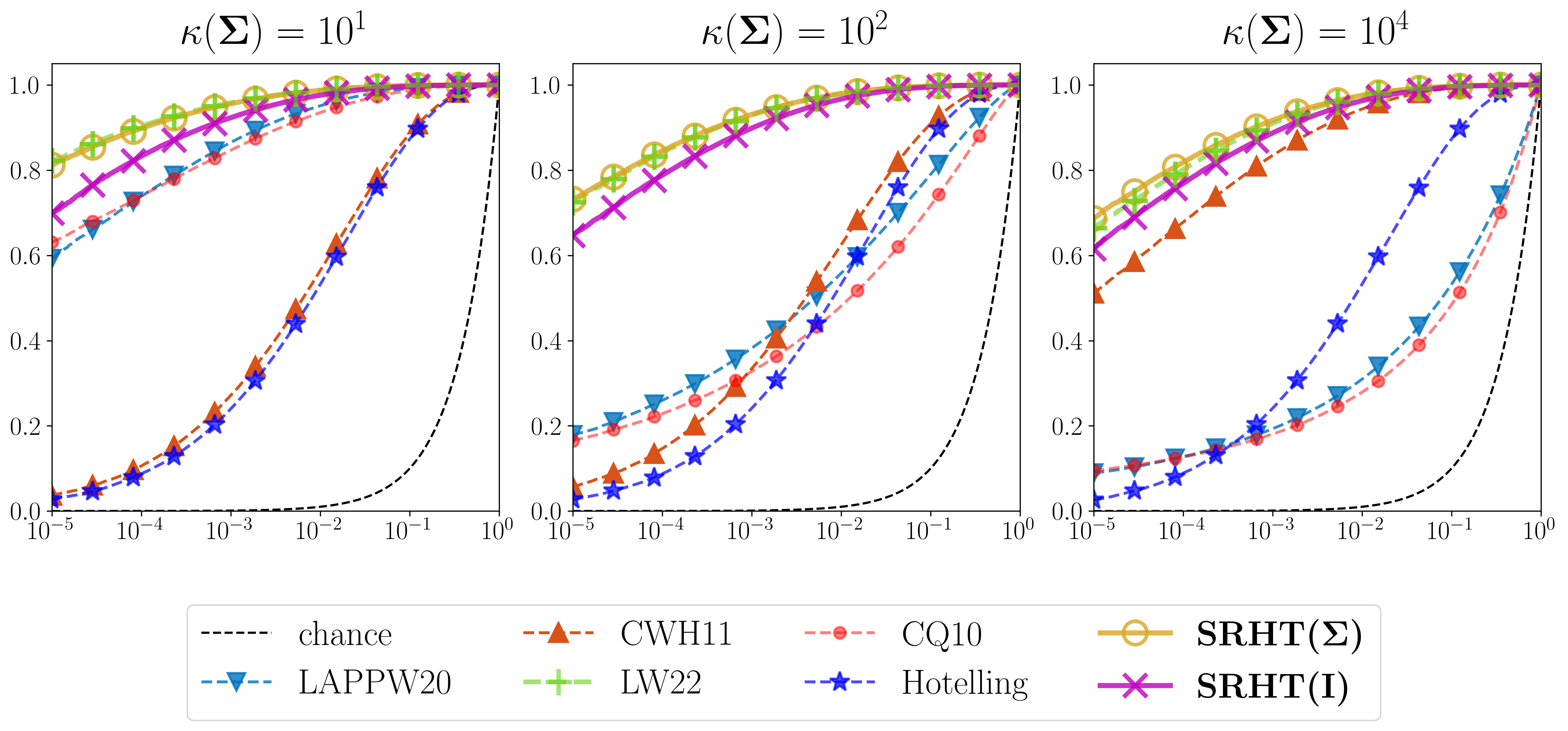}
        \caption{
        Size-adjusted empirical power of several methods, where $p=200$, $n=300$, data are sub-Gaussian, the true signal-dispersion matrix $\myOmega \propto \bfr$, and $\kappa(\bfr)$ ranges from $10^1$ to $10^4$.
        Our (finite-sample) method SRHT($\bfr$) demonstrates the highest performance, as expected, roughly tying \lwalg{}.  By comparison, the method  SRHT($\mathbf{I}$) no longer outperforms all competitors but displays robustness to signal-prior misspecification, especially as spectral complexity increases.
        }
    \label{fig:isofalse_subg}
\end{figure}

\begin{rem} \label{rem:spiked}
We note that, in accordance with the remarks following {[\textsc{Train5}]}, essentially equivalent results can be observed in the more nearly spiked model where the trailing 160 eigenvalues of \(\bfr\) are chosen to be 1, reaffirming that our results extend beyond the model of {[\textsc{Train5}]} to the spiked model.  Nevertheless, to demonstrate the validity of our results under the conditions we have given, we choose instead to conduct tests in which {[\textsc{Train5}]} is more closely approximated.  
\end{rem}

\subsection{Results for measured data} \label{sec:crawdad}

\begin{figure}[htbp]
    \centering
    {\large\textbf{Performance on Crawdad Data Set}}\\[1ex]
        \includegraphics[width=\linewidth]{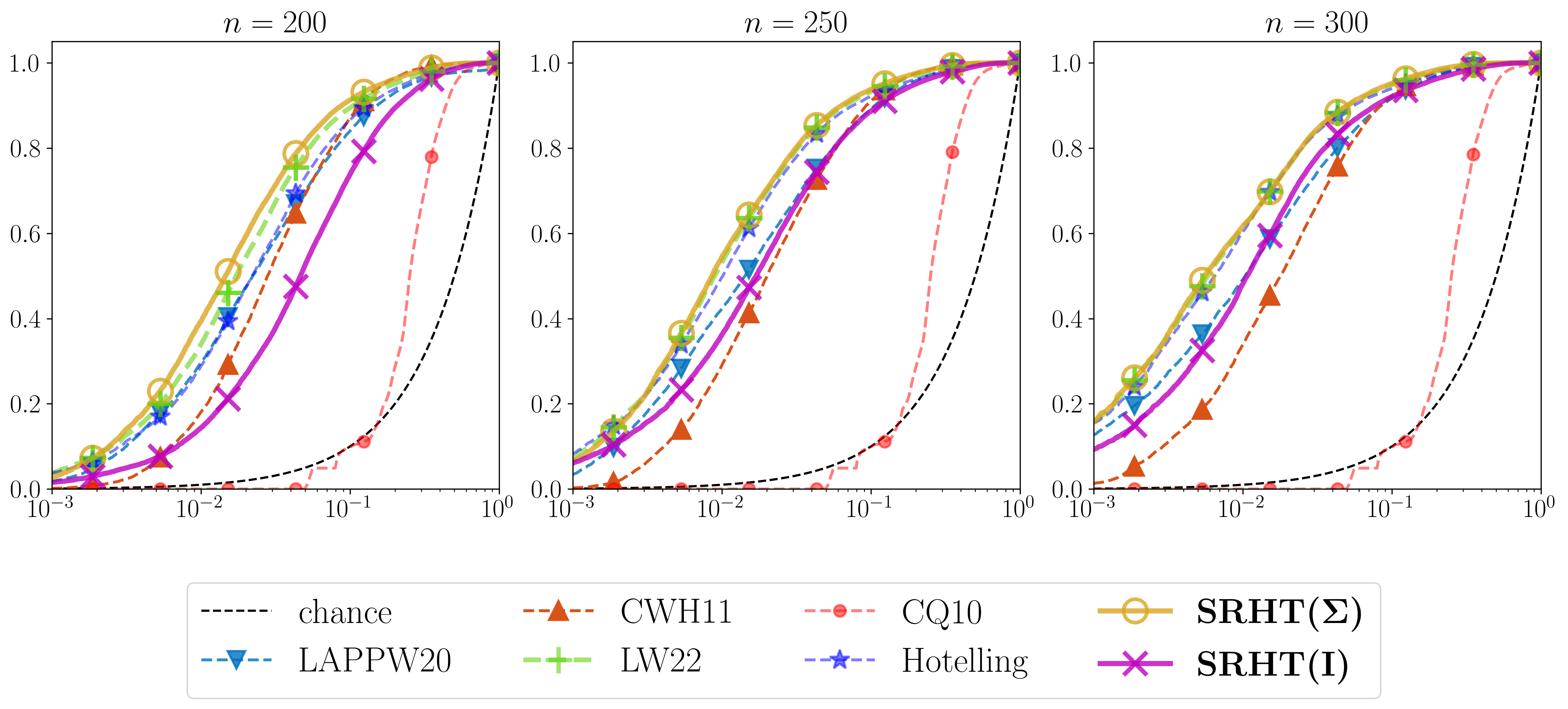}
        \caption{
        Size-adjusted empirical power of several methods and training set sizes $n$ for the Crawdad data set, where $p=182$.
        The method SRHT(\textbf{I}) appears to be outperformed by several methods for $n=200$---although the effect appears to lessen with increasing $n$---while another of our methods exists, SRHT($\bfr$), that roughly outperforms the competition.
        }
    \label{fig:crawdad}
\end{figure}

In addition to testing our algorithm on synthetically generated sub-Gaussian data, we also test our algorithm on real, measured sensor-network data.  The data set to which we apply our method is the \textsc{Crawdad} Umich/RSS one \cite{c7r30h-22}, 
in which 14 Mica2 sensors were distributed throughout a lab space to detect whether a person was present and/or moving there.  In order to do so, the sensor network collected and recorded received signal strength (RSS) measurements for each sender-receiver pair of sensors, totaling \(p=14\cdot 13 = 182\) measurements at each time instant.  Whether or not there was activity in the room was known during the experiment and can be used to determine the performance of a detection algorithm applied to the data.  RSS measurements were collected over 3127 time instants, spaced 0.5 seconds apart, with 327 time instants corresponding to activity.  In addition to human activity, RSS measurements were weakened/disturbed by background interference and noise due to cellular, wifi, and radio signals, and other nuisance sources of electromagnetic radiation.  
We have de-trended the data in the same way as in the simulations of \cite{chen2011robust}.

In contrast to our tests in the previous section, one does not know a priori the \blue{signal} dispersion matrix for the \textsc{Crawdad} data set.  
A natural guess might be the ignorant prior of \(\myOmega \propto \mathbf{I}\).  However, this choice appears to result in \blue{degraded} performance, 
\blue{
at least for $n=200$, 
in comparison to its seemingly more robust performance in Figure~\ref{fig:isofalse_subg}'s plots for synthetic data.  This effect could be due to model bias in [\textsc{Train}]: in particular, the apparent correlation and heavy-tailed nature of training samples, noted in \cite[Section~V]{chen2011robust}.

As we have observed in Remark~\ref{rem:prior}, we cannot learn $\myOmega$ from only one test observation.  Still, it is illustrative to 
examine the performance of our method for more than one signal prior: not only $\myOmega \propto \mathbf{I}$ but
\(\myOmega \propto \bfr\).  We note that our method essentially outperforms the competition once this second signal-prior assumption is made, suggesting that an optimal method \textit{exists} within the family SRHT($\myOmega$), at least for this data set.}

 This assumption that $\myOmega \propto \bfr$ would imply that signals indicating human activity are well-masked by interfering signals, perhaps because the sensor network mainly detects the component of lab users' cell phone signals that are well-aligned with the interference environment. Assuming that \(\myOmega \propto\bfr\) means that, according to \eqref{eq:int-omega}, in the limit \(\overline{h}(x)\) should be equal to \(\delta(x)\), and thus \(\overline{h}_n(x)\) can be chosen to be \(\tilde{d}_n(x)\) in Theorem~\ref{thm:approx-fopt}.  

Our experimental procedure is to pick 1000 reference samples of cardinality \(n\) from the \(3127-327=2800\) inactive time indices, and for each reference sample test the remaining \(3127-n\) time indices, generating a detection statistic for each using our algorithm and the comparators from the last section.
We then generate an empirical ROC curve for each algorithm by plotting how many detection statistics for inactive and active time indices exceed a sliding threshold relative to the totals of 2800 and 327, respectively.  Our results for \(n=200\), $n=250$, and \(n=300\) are shown in Figure~\ref{fig:crawdad}.  

The most noticeable feature of this figure is that our  SRHT($\bfr$) algorithm roughly ties with \lwalg{}.  Similar experiments with synthetic data show the same pattern, which is intriguing since the eigenvalues of \lwalg{} differ significantly from ours.  This trend suggests that the class of nearly optimal estimators is fairly flexible in some settings.  An interesting avenue for future research would be to explore whether this flexibility can be exploited to significantly improve variance of our algorithm's finite-sample performance without much effect on its mean.

\section*{Acknowledgments}
This work was supported by the US Air Force Sensors Directorate and AFOSR Lab Task 22RYCOR006.
However, the views and opinions expressed in this article are those of the authors and do not necessarily reflect the official policy or position of any agency of the U.S. government.
Examples of analysis performed within this article are only examples. 
Assumptions made within the analysis are also not reflective of the position of any U.S. Government entity. The Public Affairs approval number of this document is AFRL-2026-2955.


\begin{supplement}
\stitle{Proofs, Code, and Discussion}
\sdescription{This supplement contains detailed proofs of the paper's results, \texttt{python} code for our proposed shrinkage methods, additional plots, and discussions of several possible extensions to the developed theory.}
\nocite{suppRobinson}
\end{supplement}

\bibliographystyle{imsart-number} 
\bibliography{information-geometry-bib2.bib}






\renewcommand{\blue}[1]{{\color{blue!65!black}{#1}}}

\clearpage
\setcounter{page}{1}


\renewcommand{\blue}[1]{#1} 
\renewcommand{\theequation}{S.\arabic{equation}}
\renewcommand{\thefigure}{S.\arabic{figure}}
\renewcommand{\thesection}{S.\arabic{section}}
\numberwithin{equation}{section}

\begin{center}
{\Large \textbf{Supplementary Material for ``Spectrally robust covariance shrinkage for Hotelling's $T^2$ in high dimensions''}}

\vspace{0.25cm}
Benjamin D. Robinson$^{1}$ and Van Latimer$^{2}$

\vspace{0.25cm}
${}^{1}$Air Force Office of Scientific Research \\
${}^{2}$Radial Research and Development, Inc.

\end{center}
\vspace{1cm}

\section{Preliminary boundedness results for $\myw, \Hw,$ and $\delta$} \label{app:bounded-delta}

In what follows we will need to use the fact that \(\delta(x)\), defined in 
(2.4),
is bounded below and smooth on a neighborhood of the limiting sample spectral support \(F\), except at its boundary points.

 First consider boundedness.  There is a bound \(C\) such that  of \(1/\delta(x) \le C\) for almost every \(x\in F\), due to the mean value theorem and the identity 
 (4.2).
 Continuity of \(1/\delta(x)\) everywhere follows from continuity of \(\myw(x)\) and \(\Hw(x)\) on \(\bbR\)---the latter of which follows from continuity of \(\myw(x)\) on \(\bbR\) \cite{pan2026local}.  In other words,  \(1/\delta(x)\) is continuous on any neighborhood of \(F\) and bounded on \(F\), which implies a bound of \(2C\) on some neighborhood of \(F\), as desired.
   
For behavior near $\partial F$, we use the following lemma.
 \begin{lem} \label{lem:van-wHw}
    For all \(\mathbb{Z} \ni k \geq 1\), the function \(w\) satisfies 
\begin{equation}
    \lvert w^{(k)}(x) \rvert = O(\kappa(x)^{1/2-k}),
\end{equation}
as does \(\mathcal{H}w\), where \(\kappa(x)\) is the distance from \(x\) to the spectral edge. 
\end{lem}

We use an alternative but equivalent self-consistent equation to (3.2) from \cite{knowles2017anisotropic}:
\begin{equation}
    \frac{1}{\altm} = -z + \phi \int \frac{x}{1 + \altm x} \mathrm{d}\upopsm(x),
\end{equation}
where
\begin{equation} \label{eq:sce_alt}
    \altm(z) = (1-\phi)\left( \frac{-1}{z} \right) + \phi \um(z).
\end{equation}
Whereas \(\um\) describes the limiting density of the eigenvalues of \(\bX^\top \bX\) \vll{correct notation}, \(\altm\) describes that of \(\bX \bX^\top\), so that \(\altm\) differs from \(\um\) precisely in the inclusion of a dirac mass at 0 (in the measures of which they are Stieltjes transforms). The reason we prefer \(\altm\) and its self-consistent equation \eqref{eq:sce_alt} to \(\um\) and equation 
(3.2)
is that \(\altm\) is a explicitly a solution to 
\begin{equation}
    f(\altm) = z
\end{equation}
where \(f\) is an analytic function. At the conclusion of the lemma we will show how these self-consistent equations are equivalent. From from \cite{knowles2017anisotropic}, we know that the spectral edges are those values \(a \in \mathbb{R}\) for which the preimage \(m\) satisfying \(f(m) = a\) also satisfies \(f'(m) = 0\). 

From this alone we can deduce the square-root behavior at the edges that we desire. Firstly, 
by the inverse function theorem, $\myw(x)$ is real-analytic when $x\not\in\partial F$, and by \cite{pan2026local}, the same is true of $\Hw(x)$.  Second, since 
\begin{equation}
    i \pi w^{(k)}(x) + \pi (\mathcal{H}w)^{(k)}(x) = \um^{(k)}(x), 
\end{equation}
for real \(x\) (where the derivatives on the left-hand side are real derivaties of real function of one variable and the derivative on the right is the usual complex derivative of a meromorphic function of one complex variable), it suffices to bound
\begin{equation}
    \lvert \um^{(k)} \rvert \lessapprox \kappa^{\frac{1}{2} - k}
\end{equation}
for \(k \geq 1\). Moreover, it is evident that it suffices to prove the above for \(\um\) replaced with \(\altm\). 
We proceed by induction. See that it suffices to show 
\begin{equation}
    \lvert (\altm - \altmNaught)^{(k)} \rvert = O(\kappa^{\frac{1}{2} - k}).
\end{equation}
for \(k \geq 0\) (\(\altmNaught\) is a constant). 

The statement for \(k=0\) is given explicitly in \cite{knowles2017anisotropic}. 

Now Faa di Bruno's rule gives that 
\begin{equation}
    \begin{split}
        \frac{d^{(k+1)}}{d z^{(k)}} f(\altm)&=\sum_{\pi \in \Pi} f^{(|\pi|)}(\altm) \cdot \prod_{B \in \pi} \altm^{(|B|)} \\
        &= f'(\altm) \altm^{(k+1)} + \sum_{\pi \in \Pi\setminus \{\{1, \dots, k+1\}\}} f^{(|\pi|)}(\altm) \cdot \prod_{B \in \pi} \altm^{(|B|)} \\
    \end{split}
\end{equation}
where \(\Pi\) is the set of partitions of the integers \(\{1, \dots, k+1\}\), \(\lvert \pi \rvert\) is the number of blocks in the partition \(\pi\), and where \(\lvert{B}\rvert\) is the number of elements in a block \(B\) in a partition \(\pi\); in the second line we have split out the partition with only one block, where the \(k+1\)th derivative that we seek is apparent. This gives 
\begin{equation}
    \begin{split}
        \lvert \altm^{(k+1)}\rvert &= \left \lvert\frac{\sum_{\pi \in \Pi\setminus \{\{1, \dots, k+1\}\}} f^{(|\pi|)}(\altm) \cdot \prod_{B \in \pi} \altm^{(|B|)}}{f'(\altm)} \right \rvert \\
        &\lessapprox \left \lvert\frac{\sum_{\pi \in \Pi\setminus \{\{1, \dots, k+1\}\}} \prod_{B \in \pi} \kappa^{(\frac{1}{2} - |B|)}}{f'(\altm)} \right \rvert \\
        &\lessapprox \left \lvert\frac{\sum_{\pi \in \Pi\setminus \{\{1, \dots, k+1\}\}}\kappa^{(1 - (k+1))}}{\altm-\altmNaught} \right \rvert\\
        &\lessapprox \left \lvert\frac{\kappa^{(1 - (k+1))}}{\altm-\altmNaught} \right \rvert\\
        &\lessapprox \left \lvert\kappa^{(\frac{1}{2} - (k+1))}\right \rvert
    \end{split}
\end{equation}
where in the second line we used the analyticity of \(f\) and the inductive hypothesis \(\altm^{(l)} = O(\kappa^{\frac{1}{2} - l})\) for \(l \leq k\) and in the third line that \(B\) has at least 2 blocks.

Now, regarding the equivalence between the self-consistent equations, we will show that if \(m\) satisfies (3.2), then \(\altm\) satisfies \eqref{eq:sce_alt} (the reverse direction is similar). 

Equation (3.2) directly becomes, upon plugging in the definition of \(\altm\) to the denominator of the integrand,
\begin{equation}
    \um = -\frac{1}{z} \int \frac{1}{1 + x \altm}\mathrm{d} \pi_\infty(x) ,
\end{equation}
and a second application of the definition of \(\altm\) shows that then
\begin{equation}
    \begin{split}
        \altm &= (1-\phi)\left( \frac{-1}{z} \right) -\frac{1}{z} \phi \int \frac{1}{1 + x \altm} \mathrm{d} \pi_\infty(x) \\
        &= (1-\phi)\left( \frac{-1}{z} \right) -\frac{1}{z} \phi \int \left(1 - \frac{x\altm }{1 + x \altm} \right) \mathrm{d} \pi_\infty(x) \\
        &= (1-\phi)\left( \frac{-1}{z} \right) -\frac{1}{z} \phi \left( 1 - \altm \int \left(\frac{x }{1 + x \altm} \right) \mathrm{d} \pi_\infty(x)\right) \\
        &= \frac{-1}{z} +\frac{1}{z} \phi \altm \int \left(\frac{x }{1 + x \altm} \right) \mathrm{d} \pi_\infty(x).
    \end{split}
\end{equation}
Multiplying by \(\frac{z}{\altm}\) and rearranging terms yields 
\begin{equation}
    \frac{1}{\altm} = -z + \phi \int \left(\frac{x}{1 + x \altm} \right) \mathrm{d} \pi_\infty(x)
\end{equation}
as desired.


   

\section{Helffer-Sj\"ostrand results}\label{sec:app-HS}

This section closely follows \cite[Section 8]{benaych2016lectures}.  First we make an important remark about notation that will apply throughout the rest of this paper.

\begin{rem} 
In this Section and the following ones, some of the formulas will involve many key quantities that depend on \(n\), which could result in an excessive number of subscripts.  As a result, we will follow the convention of \cite{knowles2017anisotropic} and typically omit indices of \(n\), so that all quantities not expressly deemed constant (as the measure \(\upopsm\) is) may have a hidden dependence on \(n\).  Thus, we abbreviate \(\msamplen(z)\) by \(\msample(z)\), \(\bfrn\) by 
\(\bfr\), 
\(\bSn\) by \(\bS\), the sample resolvent 
\(\bRn(z)\) by 
\(\bR(z)\), etc; and statements such as \(\msample(z)\to \um(z)\) (a.s.) should cause no confusion.
Having made this note, however, we will still occasionally use indices of \(n\) for emphasis or clarity.
\end{rem}

The main result of this section is as follows. 
\begin{lem} \label{lem:hs_main_result}
    Assume \emph{[\textsc{Train}]}. Then, uniformly for \(f \in C^2(\mathbb{R})\), we have
    \begin{equation} \label{S-eq:hs_main_result_eig}
        \int f\myddiff \mu -\int f\myddiff \umu =O_\prec \left(
        \frac{1}{n} \left\lVert f \right\rVert _1 
        + \frac{1}{n} \left\lVert f' \right\rVert _1 
        + \frac{1}{n^2} \left\lVert f'' \right\rVert _1 \right)
    \end{equation}

    Moreover, letting \(\nu\) be the sample Ledoit-P\'ech\'e measure and \(\ulpmeas\) its limit, we have the same result when \(\mu\) and \( \umu\) are replaced with \(\nu\) and \( \ulpmeas\) 
\end{lem}

We have stated Lemma \ref{lem:hs_main_result} in this way to accomodate general \(n\)-dependent functions \(f\). One consequence is the more familiar ``law on small scales'':
\begin{corollary} \label{cor:law_on_small_scales}
    Let \(I = [a, b] \subseteq \mathbb{R}\) be an interval. Then uniformly in \(I\), 
    \begin{equation} \label{S-eq:law_small_scales_eig}
        \mu(I) - \umu(I) =O_\prec \left(
        \frac{1}{n} \right)
    \end{equation}
    and
    \begin{equation*} \label{S-eq:law_small_scales_lp}
        \nu(I) - \ulpmeas(I) =O_\prec \left(
        \frac{1}{n} \right)
    \end{equation*}

\end{corollary}
\begin{proof}[Proof of Corollary \ref{cor:law_on_small_scales}]
    Fix \(\epsilon > 0\) and let \(\eta = n^{\epsilon -1}\). 
    Let \(f: \mathbb{R}\to[0,1]\) be \(C^\infty\), equal \(1\) on \(I\), equal \(0\) on \(\left(a-\eta, b+\eta\right)^c\), and satisfy \(\left\lVert f^{\prime}\right\rVert _\infty \leq C\eta^ {-1}\) and \(\left\lVert f^{\prime\prime}\right\rVert _\infty \leq C\eta^{-2}\). Thus \(f\) ``just barely'' dominates the indicator function of \(I\). Lemma \ref{lem:hs_main_result} implies that 
    \begin{equation}
        \int f \myddiff \mu - \int f \myddiff \umu = O_\prec \left(n^{-1}\right)
    \end{equation}
    so that, using the facts that the densities of \(\umu\) and \(\ulpmeas\) are bounded above and below on \(F\) and
    \begin{equation*}
        \mu (I) \geq \int f \myddiff  \mu - Cn^{-1+\epsilon} = \int f \myddiff  \umu + O_\prec \left( n^{-1} \right) \geq \umu(I) + O_\prec \left( n^{-1} \right).
    \end{equation*}
    Repeating this logic when \(f\) is instead ``just barely'' dominated by the indicator function of \(I\) provides the reverse inequality. 
\end{proof}

\begin{proof} [Proof of Lemma \ref{lem:hs_main_result}]
    We proceed under the assumption that \(f\) is compactly supported; this only strengthens the bound since the supports of \(\mu\) and \(\nu\) are bounded by some fixed constant with high probability.  

    Let \(\hat{\mu} = \mu - \umu\), and let \(\hatm = m - \um\) be the Stieltjies transform of \(\hat{\mu}\). 

    Let \(\epsilon > 0\) and \(\eta = n^{-1 + \epsilon}\).

    Let \(\chi \in \mathcal{C}^\infty(\mathbb{R}, [0,1])\) be a  smooth cutoff function with \(\chi(0) > 0\). Lastly, let \(h \in \mathcal{C}^\infty(\mathbb{R}, [0,1])\) be supported on \([a-\eta, b+\eta]\), be identically 1 on \([a, b]\), and satisfy \(\lVert h' \rVert _\infty \leq C \eta^{-1}, \lVert h'' \rVert _\infty \leq C \eta^{-2}\).

    By Helffer-Sj\"{o}strand formula, we may write the left-hand side of equation \eqref{S-eq:hs_main_result_eig} as
    \begin{equation*}
        \int f(\lambda) \hat{\mu}(\myddiff  \lambda)=\frac{1}{2 \pi} \int \myddiff  x \int \myddiff  y\left(\partial_x+\imath \partial_y\right)\left[\left(f(x)+\imath y f^{\prime}(x)\right) \chi(y)\right] \hatm(x+\imath y),
    \end{equation*}
    which upon expanding and using that the left-hand side is real, reads
    \begin{equation*} \label{S-eq:split_hs}
        \begin{split}
            \int f(\lambda) \hat{\mu}(\myddiff  \lambda)=&-\frac{1}{2 \pi} \int \myddiff  x \int \myddiff  y f^{\prime \prime}(x) \chi(y) y \operatorname{Im} \hatm(x+\imath y) \\
            &-\frac{1}{2 \pi} \int \myddiff  x \int \myddiff  y f(x) \chi^{\prime}(y)\operatorname{Im} \hatm(x+\imath y) \\
            &- \frac{1}{2 \pi} \int \myddiff  x \int \myddiff  y f'(x) \chi^{\prime}(y) y \operatorname{Re} \hatm(x+\imath y)
        \end{split}
    \end{equation*}

    The second and third terms are easiest to bound. Since 
    \(\left| \chi'(y)\hatm(x+\imath y) \right| \leq \eta\) uniformly on the purely complex set \(\mathrm{supp}\, \chi^{\prime}(y) \subseteq \mathbb{C}\), we bound the second term by 
    \begin{equation*} \label{S-eq:hs_2_bound}
        \begin{split}
            &\lessapprox \left| \int \myddiff x f(x) \int \myddiff y \chi'(y) \operatorname{Im} \hatm(x+\imath y)\right|\\
            &\lessapprox \eta \int \myddiff x \left| f(x) \right|.
        \end{split}
    \end{equation*}
    Similarly the third term is bounded by 
    \begin{equation*} \label{S-eq:hs_3_bound}
        \begin{split}
            &\lessapprox \left| \int \myddiff x f^{\prime}(x) \int \myddiff y \chi'(y) y \operatorname{Re} \hatm(x+\imath y)\right|\\
            &\lessapprox \eta \int \myddiff x \left| f^{\prime}(x) \right|.
        \end{split}
    \end{equation*}
    
    Now, we must bound the first term
    \begin{equation} \label{S-eq:hs_first_term}
        -\frac{1}{2 \pi} \int \myddiff  x \int \myddiff  y f^{\prime \prime}(x) \chi(y) y \operatorname{Im} \hatm(x+\mathrm{i} y). 
    \end{equation}
    First we bound
    \begin{equation} \label{S-eq:hs_first_term_small_y}
        -\frac{1}{2 \pi} \int \myddiff  x \int_{\left|y\right| \leq \eta} \myddiff  y f^{\prime \prime}(x) \chi(y) y \operatorname{Im} \hatm(x+\mathrm{i} y). 
    \end{equation}
    It is a general fact of Stieltjies transforms \(t\) of positive measures that 
    \begin{equation*}
        y\mapsto y |t(x + \mathrm{i} y)|
    \end{equation*}
    is nondecreasing for \(y > 0\) and for any \(x\). Since \(\eta |\hatm(x+\mathrm{i} \eta)| \leq \eta\) with high probability, and moreover \(\eta |\um(x+\mathrm{i} \eta) | \leq \eta\) since the density of \(\umu\) is bounded, we find that \(\eta |m(x+\mathrm{i} \eta)| \leq \eta\).
    This general fact may then be applied to both \(y |m(x + \mathrm{i} y)|\) and \(y |\um(x + \mathrm{i} y)|\), together with \(t(\overline{z}) = \overline{t(z)}\), to establish 
    \begin{equation} \label{S-eq:maxineq}
        \max_{\left|y\right| \leq \eta} \left|y\hatm(x + \mathrm{i}y)\right| \leq \eta
    \end{equation}
    with high probability. Now we may bound \eqref{S-eq:hs_first_term_small_y} by 
    \begin{equation*}
        \lessapprox \int \myddiff  x \left|f^{\prime \prime}(x)\right|  \int_{\left|y\right| \leq \eta} \myddiff  y \cdot \eta \lessapprox \eta^{2}\int \myddiff  x \left|f^{\prime \prime}(x)\right|
    \end{equation*} 
    with high probability. 

    Lastly we bound 
    \begin{equation} \label{S-eq:hs_first_term_large_y}
        -\frac{1}{2 \pi} \int \myddiff  x \int_{\left|y\right| > \eta} \myddiff  y f^{\prime \prime}(x) \chi(y) y \operatorname{Im} \hatm(x+\mathrm{i} y). 
    \end{equation}

    This is more difficult that the previous bound because the triangle inequality for integrals is no longer affordable. The way around this is to use that \(\int \myddiff x f''(x)\) has a much tighter bound than \(\int \myddiff x \left|f''(x)\right|\), through the fundamental theorem of calculus. Some care is still required, however, since we must not bound \(\int \myddiff x f''(x)\) only but the larger expression \eqref{S-eq:hs_first_term_large_y} which has another \(x\)-dependent factor in the integrand. 

    We integrate by parts: first in \(x\), noting that \(f'\) vanishes at \(\pm \infty\), 
    \begin{equation*}
        \begin{split}
            & -\frac{1}{2 \pi} \int \myddiff  x \int_{\left|y\right| > \eta} \myddiff  y f^{\prime \prime}(x) \chi(y) y \operatorname{Im} \hatm(x+\mathrm{i} y)
            \\
            &= \frac{1}{2 \pi} \int \myddiff  x \int_{\left|y\right| > \eta} \myddiff  y f^{ \prime}(x) \chi(y) y \partial_x\operatorname{Im} \hatm(x+\mathrm{i} y) \\
            &= -\frac{1}{2 \pi} \int \myddiff  x \int_{\left|y\right| > \eta} \myddiff  y f^{ \prime}(x) \chi(y) y \partial_y\operatorname{Re} \hatm(x+\mathrm{i} y)
        \end{split}
    \end{equation*}
    where in the last line we used the holomorphy of \(\hatm\), and then integrating by parts over \(y\), 
    \begin{equation*}
        \begin{split}
            &= -\frac{1}{2 \pi} \int \myddiff  x \int_{\left|y\right| > \eta} \myddiff  y f^{ \prime}(x) \chi(y) y \partial_y\operatorname{Re} \hatm(x+\mathrm{i} y) \\
            &= -\frac{1}{2 \pi} \int \myddiff  x f^{ \prime}(x)\left(\left[y\chi(y) \operatorname{Re} \hatm(x+\mathrm{i} y)\right]_{\eta}^{-\eta}- \int_{\left|y\right| > \eta} \myddiff  y(\chi(y) +y\chi'(y)) \operatorname{Re} \hatm(x+\mathrm{i} y)\right) 
        \end{split}
    \end{equation*}
    Continuing, 
    \begin{equation*}
        \begin{split}
            \left|\cdot\right|&\lessapprox  \int \myddiff  x \left|f^{ \prime}(x)\right|\left|O(\eta)- \int_{\left|y\right| > \eta} \myddiff  y(\chi(y)\operatorname{Re} \hatm(x+\mathrm{i} y) +y\chi'(y)\operatorname{Re} \hatm(x+\mathrm{i} y)) \right| \\
            &\lessapprox  \int \myddiff  x \left|f^{ \prime}(x)\right|\left(O(\eta)+ \int_{\eta<\left|y\right|<2} \myddiff  y \chi(y)\left|\operatorname{Re} \hatm(x+\mathrm{i} y)\right|  + \int_{1<\left|y\right|<2}y\left|\operatorname{Re} \hatm(x+\mathrm{i} y)\right| \right) \\
            & \lessapprox  \int \myddiff  x \left|f^{ \prime}(x)\right|\left(O(\eta) + \int_\eta^2 \myddiff y \frac{\eta}{y} + O\left(\eta\right) \right)\\
            & \lessapprox  \int \myddiff  x \left|f^{ \prime}(x)\right|\left(O(\eta) - \eta\log \eta \right) \\
            & \lessapprox \eta \log\eta \int \myddiff  x \left|f^{ \prime}(x)\right|
        \end{split}
    \end{equation*}
    with high probability, where we have used \eqref{S-eq:maxineq} in the middle step.  Thus, we have established equations \eqref{S-eq:hs_first_term_large_y}, \eqref{S-eq:hs_first_term}, and ultimately, \eqref{S-eq:hs_main_result_eig}.

    The proof is exactly the same when \(\mu, \umu\) are replaced with \(\nu, \ulpmeas\).

\end{proof}

\section{$L^1$ Error in the \LW{} Eigenvalues} \label{app:lw-rate}

In what follows, continuing to use our convention of suppressing subscripts of \(n\), \(L^1(\mu)\) will denote the \(n\)-dependent norm \(L^1(\mu_n)\), and \(\mywt(x)\) will denote \(\mywtnx\), for example.

The goal is to approximate \(\delta(x) =x/\{[1-\asp-\asp x\Hw(x)]^2+\asp^2 x^2 \pi^2 \myw(x)^2\}\) in the  \(L^1(\mu)\)-norm to order \(O_\prec(n^{-2/3})\).  Since \(\delta(x)\) is bounded below on a neighborhood of \(F\) by Section~\ref{app:bounded-delta}, it will suffice to show \(\lv \Hwt - \Hw\rv_{L^1(\mu)}\) and \(\lv \mywt - \myw\rv_{L^1(\mu)}\) are both \(O_\prec(n^{-2/3})\).  Throughout this and later appendices, \(\Delta\) will stand for \(n^{-1/3}\). 

For the sake of brevity, we will alternatively denote \(\mywt(x)\) to be the additive convolution \(\varphi_\Delta * \mu\), where \(\varphi_\Delta(x)\) is the even function \(\Delta^{-1} k(x/\Delta)\),  and \(\mu\) is again the empirical spectral measure.  
This marks a slight departure from the multiplicative convolution with the approximation to the identity \(x\mapsto \Delta^{-1}k((x-1)/\Delta)\) appearing in the theorem, but the proofs are identical to order \(\Delta^2\)  after appropriate logarithmic transformations and Hilbert-transform invariances are applied.  The reason the multiplicative convolution was originally proposed is mainly a practical one: the smallest estimated eigenvalue converges more quickly as \(n\to\infty\) in the multiplicative formulation.

Our first main result of this section is the following. 

\begin{prop} \label{prop:prop1}
Assume the conditions of Theorem~4.5.
Then $\wmu$ satisfies
\begin{equation*}
\frac{1}{p}\sum_{i=1}^p|\wmu(\lambda_i)-w(\lambda_i)| \prec n^{-2/3}.
\end{equation*}
(In other words, \(\lv \mywt - \myw\rv_{L^1(\mu)} \prec n^{-2/3}\) .)
\end{prop}

We will use the decomposition, with $\wdel:=\varphi_{\mysmall}*d\umu$,
\begin{align} \label{S-eq:main-decomp}
    |\myw-\wmu| & \le |\myw-\wdel| + |\varphi_{\mysmall}*(d\umu-d\mu)|,
\end{align}
\blue{
first considering the $L^1(\mu)$ norm of second term in the decomposition.


\begin{lem} \label{lem:lem1} Assume the conditions of Theorem~4.5.
Then
\[
\frac{1}{p}\sum_{i=1}^p |\varphi_{\mysmall}*(d\umu-d\mu)|(\lambda_i) \prec \mysmall^{-1}p^{-1}
\]
\end{lem}
}
\begin{proof}
By Lemma~\ref{lem:hs_main_result}, the second term of the right-hand side \bdradd{of \eqref{S-eq:main-decomp}} is 
\begin{equation*}
    \prec n^{-1} \left\lVert \varphi_\Delta \right\rVert _1
    + n^{-1} \left\lVert \partial_x \varphi_\Delta \right\rVert _1
    + n^{-2} \left\lVert \partial_x^2 \varphi_\Delta \right\rVert _1.
\end{equation*}
We only consider the case where  \(\varphi\) and \(\varphi'\) have no more than 3 monotonic intervals. The fundamental theorem of calculus then yields 
\begin{equation*} 
    \begin{split}
        \prec n^{-1} + 6n^{-1} \left\lVert \varphi_\Delta \right\rVert _\infty + 6n^{-2} \left\lVert \partial_x \varphi_\Delta \right\rVert _\infty\\
        \prec n^{-1} + 6 n^{-1} \Delta^{-1} + 6 n^{-2} \Delta^{-2} \prec \Delta^{-1} n^{-1}
    \end{split}
\end{equation*}
so that 
\begin{equation} \label{S-eq:w_bound_law_part}
    \frac{1}{p}\sum_{i=1}^p|w_\Delta(\lambda_i)-\wmu(\lambda_i)| \prec \Delta^{-1} n^{-1}.
\end{equation}
\end{proof}

We will now focus on the first term \blue{in the decomposition of \eqref{S-eq:main-decomp}, along with a similar result for $\Hw$}.  

\begin{lem} \label{lem:lem2}
Assume the conditions of Theorem~4.5.
 Then
\begin{align*}
\text{\emph{(a)}} \qquad & \frac{1}{p} \sum_{i=1}^p |\myw(\lambda_i) - \myw_{\mysmall}(\lambda_i)|  \prec \Delta^2, \qquad \text{\emph{and}}
\\
\text{\emph{(b)}} \qquad & 
\frac{1}{p} \sum_{i=1}^p |\Hw(\lambda_i) - \Hw_{\mysmall}(\lambda_i)|  \prec \Delta^2.
\end{align*}

\end{lem}

\begin{proof}
\blue{
By Lemma~\ref{lem:van-wHw}, we have $\myw^{(k)}(x)$ exists and $\myw^{(k)}(x) \lessapprox \kappa(x)^{1/2-k}$ if $x\not\in \partial F$, and we get the same properties for $\Hw(x)$.  
 Thus, letting $a(x)$ be a smooth function on $\bbR\backslash(\partial F)$ that satisfies $\myfunc^{(k)}(x) \lessapprox \kappa(x)^{1/2-k}$, and defining $a_{\mysmall} = \varphi_{\mysmall}*a$, 
  the following is sufficient for both (a) and (b):
  \begin{equation*} 
\frac{1}{p} \sum_{i=1}^p |\myfunc(\lambda_i) - \myfunc_{\mysmall}(\lambda_i)| \prec \Delta^2.
\end{equation*}
  

To this end, for $\kappa(x) \le 2\mysmall$} we have
\begin{equation*} \label{S-eq:kappasmall}
    \begin{split}
        |\myfunc(x) - \myfunc_{\mysmall}(x)| & \le \int_{x-\mysmall}^{x+\mysmall}\varphi_{\mysmall}(x-t)|\myfunc(x)-\myfunc(t)|\, dt \\ 
        & \lessapprox
        \int_{x-\mysmall}^{x+\mysmall}\left|\varphi_{\mysmall}(x-t)\right|
        \mysmall^{-1/2}\mysmall\, dt \\
        & \leq \mysmall^{1/2},
    \end{split}
\end{equation*}
where we have applied the mean value theorem to $|\myfunc(x)-\myfunc(t)|$ and used $\myfunc'(x)\lessapprox \kappa(x)^{-1/2} \lessapprox \Delta^{-1/2}$.
Therefore, we have
\begin{equation} \label{S-eq:w_bound_conv_part_small}
    \frac{1}{p} \sum_{\kappa(\lambda_i) \leq 2\mysmall} |\myfunc(x) - \myfunc_{\mysmall}(x)| \leq \left( C (2\mysmall)^{3/2} + p^{\epsilon -1} \right) \Delta^{1/2} \lessapprox \Delta^2
\end{equation}
with high probability, where we estimated the number of terms in the sum using the small-scale Mar\v{c}enko--Pastur law (second clause of Theorem~3.3).

For \(\kappa(x) > 2\mysmall\), Taylor-expanding
$\myfunc(t)$ about \(t=x\) and using evenness of \(\varphi\) gives that 
\begin{equation} \label{S-eq:kappa-lg}
    \begin{split}
        \myfunc(x) - \myfuncsmall(x) &= \int_{x - \Delta}^{x + \Delta} \varphi_\Delta(x - t) (\myfunc(x) - \myfunc(t)) \, \myddiff t \\
        &= \int_{x - \Delta}^{x + \Delta} \varphi_\Delta(x - t) \\
        & \quad \cdot \left(\myfunc'(x) (x-t) - \frac{\myfunc''(x)}{2} (x - t)^2 + O(\Delta^3\max_{[x-\Delta, x+\Delta]}\myfunc''')\right) \, \myddiff t \\
        &\lessapprox \Delta^2 (\kappa(x) - \Delta)^{-3/2} \\
        &\lessapprox \Delta^2 \kappa(x)^{-3/2} 
    \end{split}
\end{equation}
where we have used 
the bound \(\left|\myfunc^{(k)}\right| \lessapprox \kappa^{1/2 - k}\) for any fixed \(k\ge 1\). 

Therefore, we have 
\begin{equation*}
    \frac{1}{p} \sum_{\kappa(\lambda_i) > 2\mysmall} |\myfunc(\lambda_i) - \myfunc_{\mysmall}(\lambda_i)| \lessapprox \frac{1}{p} \sum_{\kappa(\lambda_i) > 2\mysmall} \Delta^2 \kappa(\lambda_i)^{-3/2}
\end{equation*}
The right hand side is precisely 
\begin{equation*}
    \int \widetilde{f} \myddiff  \mu,
\end{equation*}
where \(\widetilde{f}(x) := \Delta^2 \kappa(x)^{-3/2}\boldsymbol{1}_{\kappa(x) > 2\mysmall}\). The only difficulty in applying Lemma \ref{lem:hs_main_result} is that \(\widetilde{f}\) is discontinuous, so we must adjust it. Let \(g:\mathbb{R} \to [0,1]\) be \(C^\infty\), equal \(0\) for \(x \geq 2\), equal \(1\) for \(x \leq 1\) and satisfy \(\left\lVert g' \right\rVert _\infty + \left\lVert g'' \right\rVert _\infty \leq C\). Then define
\begin{equation*}
    f(x) = \widetilde{f}(x) g((2\mysmall)^{-1} \kappa(x)),
\end{equation*}
which is now \(C^2\) and satisfies 
\begin{equation*}
    \begin{split}
        \left\lVert f \right\rVert _1 +
        \left\lVert f' \right\rVert _1 +
        \left\lVert f'' \right\rVert _1 &\leq 1.
    \end{split}
\end{equation*}
Therefore Lemma \ref{lem:hs_main_result} gives
\begin{equation} \label{S-eq:w_bound_conv_part_big}
    \begin{split}
        \frac{1}{p} \sum_{\kappa(\lambda_i) > 2\mysmall} |\myfunc(x) - \myfunc_{\mysmall}(x)| &\lessapprox \frac{1}{p} \sum_{\kappa(\lambda_i) > 2\mysmall} \Delta^2 \kappa(\lambda_i)^{3/2} \\
        &\lessapprox\int \widetilde{f}(x) \myddiff  \mu(x) \\
        &= \int \widetilde{f}(x) \myddiff  \umu(x) + O_\prec(n^{-1})\\
        &\lessapprox \int \widetilde{f}(x) \kappa(x)^{1/2} \myddiff  x + O_\prec(n^{-1}) \\
        &  \lessapprox \mysmall^2 \int_{\kappa(x)>2\mysmall} \kappa(x)^{-3/2}\kappa(x)^{1/2}\, dx + O_\prec(n^{-1}) \\
        &\asymp \Delta^2 \log|\Delta| \prec \Delta^2
    \end{split}
\end{equation}
with high probability. 


\end{proof}

We may now quickly prove the Proposition above.

\begin{proof}[Proof of Proposition~\ref{prop:prop1}]

Equations \eqref{S-eq:w_bound_conv_part_small} and \eqref{S-eq:w_bound_conv_part_big} together establish that 
\begin{equation*}
    \frac{1}{p}\sum_{i=1}^p|\mywt(\lambda_i)-w_\Delta(\lambda_i)| \prec \Delta^2,
\end{equation*}
which, with equation \eqref{S-eq:w_bound_law_part} gives
\begin{equation*}
    \frac{1}{p}\sum_{i=1}^p|w(\lambda_i)-\mywt(\lambda_i)| \prec \Delta^2 + \Delta^{-1} n^{-1}.
\end{equation*}
Thus the choice of \(\Delta = n^{-1/3}\) becomes clear.. 

\end{proof}

The Hilbert transform \(\mathcal{H}w\) may be treated in much the same way as \(w\). 
\begin{prop} \label{prop:prop2}
Assume the conditions of Theorem~4.5.
Then $\Hwmu$ satisfies
\begin{equation*}
    \frac{1}{p}\sum_{i=1}^p|\Hwmu(\lambda_i)-\Hw(\lambda_i)| \prec n^{-2/3}.
\end{equation*}
(In other words, \(\lv\htrans\mywt - \Hw \rv_{L^1(\mu)} \prec n^{-2/3}\) .)
\end{prop}

\begin{proof}
We decompose:
\begin{equation*}
    \begin{split}
        \lvert \mathcal{H}w - \htrans \mywt \rvert &\leq \lvert \mathcal{H}w - \mathcal{H}w_\Delta \rvert + \lvert \mathcal{H}w_\Delta - \Hwt \rvert
    \end{split}        
\end{equation*}
Using the relation between \(\mathcal{H}\) and convolution, the above may be written as 
\begin{equation} \label{S-eq:Hw-triangle}
    \begin{split}
        \lvert \mathcal{H}w - \varphi_\Delta * (\mathcal{H}w) \rvert + \lvert (\mathcal{H}\varphi_\Delta) * \umu - (\mathcal{H} \varphi_\Delta) * \mu \rvert
    \end{split}
\end{equation}
The latter term may be written, letting \(\hat{\varphi} := \mathcal{H} \varphi\), as
\begin{equation*}
    \lvert (\mathcal{H}\varphi_\Delta) * \umu - (\mathcal{H} \varphi_\Delta) * \mu \rvert = \lvert \hat{\varphi}_\Delta * \umu - \hat{\varphi}_\Delta * \mu \rvert
\end{equation*}
Let \(f = \hat{\varphi}_\Delta \cdot g\), where \(g\) is \(C^{\infty}\), equals 1 on \([-C, C]\), equals 0 on \(\mathbb{R} \setminus [-2C, 2C]\), and has bounded first and second derivatives, for some large constant \(C\). 
As in the proof of Lemma~\ref{lem:lem1}, we use Lemma~\ref{lem:hs_main_result}: uniformly in \(x \leq C/2\), 
\begin{equation} \label{S-eq:big-LW-split}
    \begin{split}
        \left|\hat{\varphi}_\Delta * \umu(x) - \hat{\varphi}_\Delta * \mu(x)\right| &= \left|f * \umu(x) - f * \mu(x)\right|\\
        &\prec n^{-1} \left\lVert f\right\rVert _1
        + n^{-1} \left\lVert f'\right\rVert _1
        + n^{-2} \left\lVert f''\right\rVert _1
    \end{split}
\end{equation}
(the appearance of \(\left\lVert f\right\rVert _1\) is why we needed to attenuate \(\hat{\varphi}_\Delta\) by a cutoff function: \(\mathcal{H}\varphi\) is not \(L^1\); we may do this because \(\umu\) and \(\mu\) have uniformly bounded support with high probability). Now we may conclude similarly to in the argument leading to \eqref{S-eq:w_bound_law_part}. We have that \( \left\lVert f\right\rVert _1 \lessapprox |\log \Delta|\), and that \(f\) and \(f'\) may be ensured to have a bounded number of monotonic intervals, so that \(\left\lVert f'\right\rVert _1 \lessapprox \left\lVert f\right\rVert _\infty \lessapprox \Delta^{-1}\) and \(\left\lVert f''\right\rVert _1 \lessapprox \left\lVert f'\right\rVert _\infty \lessapprox \Delta^{-2}\).  

\blue{As a result of the above, $\Vert\Hw - \Hwt\Vert_{L^1(\mu)}$
can be broken into $O_\prec (n^{-1}\mysmall^{-1})$
and the following 
\begin{equation*}
     \frac{1}{p} \sum_{i=1}^p \left| \mathcal{H}w(\lambda_i) - \varphi_{\mysmall}*(\mathcal{H}w)(\lambda_i)\right|,
\end{equation*}
which is $O_\prec(\Delta^2)$ by Lemma~\ref{lem:lem2}(b). 
%
Thus, choosing $\mysmall = n^{-1/3}$ as before completes the proof.}

\end{proof}

Combining the Propositions yields precisely the goal named at the beginning of this section, that \(\lv\mywt - \myw \rv_{L^1(\mu)} \prec n^{-2/3}\) and \(\lv\htrans\mywt - \Hw \rv_{L^1(\mu)} \prec n^{-2/3}\).

\section{Deterministic Limiting Variance} \label{app:alternating-trace}

We choose the somewhat more general task of estimating \(p^{-1}\tr_I(\myXi_n f_n(\bSn)\bfrn g_n(\bSn))\) for some matrix \(\myXi_n\) having 
'\begin{equation*} \label{S-eq:xilocal}
p^{-1}\sum_{i=1}^p \frac{\buni\tps \myXi_n\buni}{\lamnind - z} = \int \frac{\xi_\infty(x)}{x-z}\, dx + O_\prec\left(\frac{1}{n\eta}\right)
\end{equation*}
for some bounded measurable \(\xi_\infty:\bbR\to\bbR\) and all \(z\in \Cplus\). By the Ledoit--P\'ech\'e theorem (Theorem~4.1)
and Section~\ref{sec:app-HS}, this is certainly true of \(\bfrn\)---the value of \(\myXi_n\) that is of most interest---in which case \(\xi_\infty(x)=\delta(x)\).

For this section, we will need the notation \(\OPtilde(\mydot)\), which is the in-probability analogue of \(O_\prec(\mydot)\).  In other words, writing \(a_n =  \OPtilde(b_n)\) means for every \(\epsilon > 0\), 
\[
\Pr[|a_n | \ge n^\epsilon b_n ] \to 0,
\]
as \(n\to\infty\).

The strategy is to first estimate the alternating trace above for \(f\) and \(g\) equal to resolvent functions, then use the usual limiting argument as complex arguments go the real axis to go from resolvent functions to continuous functions.  For the remainder of this section, we will resume following the convention of suppressing most subscripts of \(n\).

Assume as a first case that \(f(\lambda) \equiv f_n(\lambda)=(\lambda-x_z-i\eta_z)^{-1}\) and \(g(\lambda)\equiv g_n(\lambda) = (\lambda-x_{\myclx}-i\eta_{\myclx})^{-1}\) for positive and distinct \(\eta_x\) and \(\eta_z\), and let
\(z=x_z+i\eta_z\) and \(\myclx=x_{\myclx}+i\eta_{\myclx}\). Our first main goal will be to prove the following lemma:
\begin{lem} \label{lem:multi-resolv} Assume \emph{[\textsc{Train1}]-[\textsc{Train4}]}.  Then
\begin{equation} \label{S-eq:tracio-denom-target-1}
\begin{split}
& p^{-1}\tr\left(\myXi f(\bS) \bfr g(\bS) \right) 
\\
& =  p^{-1}\sum_{i=1}^{p} \frac{\lambda_i\bu_i\tps \myXi\bu_i}{(\lambda_i-z)(\lambda_i-\myclx)}\left(1+\frac{1}{n}\sum_{i=1}^{p} \frac{\bu_i\tps\bfr\bu_i}{\lambda_i-z}\right)\left(1+\frac{1}{n}\sum_{i=1}^{p} \frac{\bu_i\tps\bfr\bu_i}{\lambda_i-\myclx}\right)
\\
&
+ \OPtilde\left( \frac{1}{n\eta_z \eta_{\myclx}} \right).
\end{split}
\end{equation}
\end{lem}
\noindent This lemma can be improved to high-probability convergence using the recent multi-resolvent laws for sample covariance matrices \cite{lin2026eigenvector}, but we reproduce the proof here with the weaker \(\OPtilde\) mode of convergence for completeness.

Let   \(\bc_{k} = n^{-1/2}\bfrhalf\bz_k\) and \(\bRk = (\bR(z)^{-1}-\bc_k\bc_k\tps)^{-1}\).
First, we prove a lemma involving the average of the recurring estimation error \(\epsilon_k\), where
\[
\epsilon_k := \mathbf{c}_{k}\tps\resolv^{(k)}(z)\mathbf{c}_{k}-\frac{\mathrm{tr}(\bfr \resolv(z))}{n}.
\]
\begin{lem} \label{lem:ip-rate}
Assume \emph{[\textsc{Train1}]-[\textsc{Train4}]}, and define \(\eta := \eta_z\) and \(\epsilon_k\) as above.  Then we have
\begin{align*}
&
\mathcal{E}_p := \frac{1}{p}\sum_{k=1}^p \epsilon_k = O_{\tilde{P}}\left( \frac{1}{n\eta } \right). \label{S-eq:Ep}
\end{align*}
\end{lem}
 \begin{proof}
 Let \(\tilde{\epsilon}_k = \mathbf{c}_{k}\tps\resolv^{(k)}(z)\mathbf{c}_{k}-n^{-1}\mathrm{tr}(\bfr \resolv^{(k)}(z))\).
 Since we have from \cite[Lemma~2.6]{silverstein1995empirical} that
 \[
 \left|\frac{\tr(\bfr \bR(z))}{n}-\frac{\tr(\bfr\bRk)}{n}\right| \le \frac{\lv \bfr\rv}{n\eta},
 \]it suffices to prove that \(\tilde{\mathcal{E}}_p := p^{-1}\sum_{k=1}^p \tilde{\epsilon}_k =  \OPtilde(n^{-1}\eta^{-1})\).
 
First we prove, uniformly in \(k\), 
\begin{align}
|\tilde{\epsilon}_k|^2 \prec \frac{1}{n\eta}. \label{S-eq:epsksqone}
\end{align}
 By Hanson--Wright \cite{rudelson2013hanson},
 \begin{align*}
 \Pr\left[ |\tilde{\epsilon}_k| \ge t \mid \bRk \right] \le 2\exp\left[-c\min\left(\frac{n^2 t^2}{C^4\lv \bfrhalf \bRk \bfrhalf \rv^2_\hs}, \frac{n t}{C^2 \lv \bfr \bRk \rv} \right)\right],
 \\
 \end{align*}
 for some absolute \(c>0\) and some \(C>0\) depending only on the moments of \(W\).
 Thus, we would like to estimate the Hilbert-Schmidt norm above: namely,
 \begin{align}
     \tr\left(\bfr \bRk \bfr \bRk^* \right)  & 
      \le \lv \bfr \rv \tr\left(\bRk\bfr\bR^k(\overline{z}))\right) 
      \label{S-eq:hs-resolv}
      \\
      & \lessapprox \sum_{k=1}^p \frac{\bu_k\tps \bfr \bu_k}{|\tilde{\lambda}_k-z|^2} 
      \nonumber
      \\
      & = \frac{p}{i\eta}\left( \myTheta^{(k)}(z) - \myTheta^{(k)}(\overline{z})\right), \nonumber
 \end{align}
 where \((\mydot)^*\) denotes the Hermitian transpose, \(\tilde{\lambda}_k\) and \(\tilde{\bu}_k\) are the eigenvalues/vectors, and \(\Theta^{(k)}(z)\) is the \lp{} analytic function, corresponding to \(\bS-\bc_k\bc_k\tps\).    Since, as discussed in Section~4.2,
 \(\myTheta^{(k)}(z)\) has a limiting value as \(z\to x \in \mathbb{R}\backslash \{0\}\), the left-hand side of \eqref{S-eq:hs-resolv} has order at most \(n/\eta\) as \(n\to\infty\), almost surely.  It can also be seen that the spectral norm has order at most \(1/\eta\).  Thus, taking \(t\) to be \(n^\epsilon/\sqrt{ \eta n}\) and smoothing gives \eqref{S-eq:epsksq}.  For the sake of the analysis that follows, we also note that by the (conditional) layercake theorem we get
 \begin{align}
& \bbE[|\tilde{\epsilon}_k|^2 \mid \bRk] \prec \frac{1}{n\eta }. \label{S-eq:epsksq}
\end{align}
 
 Consider terms of the form \(\tilde{\epsilon}_j\overline{\tilde{\epsilon}}_k\), where \(\overline{(\, \mydot\, )}\) denotes the complex conjugate operation and \(j\ne k\).
  Using the Woodbury formula, we obtain
  \[
  \bRj = \bRjk - \frac{\bRjk\bc_k\bc_k\tps\bRjk}{1+\bc_k\tps\bRjk\bc_k},
  \]
   where \(\bRjk = (\bRj^{-1} - \bc_k\bc_k\tps)^{-1}\).
   Using steps similar to \eqref{S-eq:epsksq}, we obtain
   \begin{align} \label{S-eq:Rjk}
   \bbE[\tilde{\epsilon}_j\overline{\tilde{\epsilon}}_k \mid \bRjk ] = O_\prec\left( \frac{1}{n^2 \eta^2}\right).
   \end{align}
   As a result, by applying Markov and smoothing \eqref{S-eq:epsksq} and \eqref{S-eq:Rjk}, we get
   \begin{align*}
   \Pr\left[ \lvert \tilde{\mathcal{E}}_p \rvert \ge t \right] & \le \frac{1}{t^2 p^2}  \sum_{j,k=1}^p   \bbE[\tilde{\epsilon}_j\overline{\tilde{\epsilon}}_k] 
   \\
   & = O\left(\frac{n^{\epsilon}}{t^2 n^2\eta} + \frac{n^\epsilon}{t^2 n^2 \eta^2} \right). 
   \end{align*}
   for any \(\epsilon > 0\).
   Taking \(t = n^{\epsilon}/(n\eta)\) gives the desired result for \(\tilde{\mathcal{E}}_p\), and thus, as remarked earlier in the proof, for \(\mathcal{E}_p\).
 \end{proof}

We now turn to the multi-resolvent law stated earlier.

\begin{proof}[Proof of Lemma~\ref{lem:multi-resolv}] We first observe the following:
\begin{align}
  p^{-1}\mathrm{tr}(\resolv(z)\myXi \resolv(\myclx)) & =p^{-1}\mathrm{tr}(\resolv(\myclx)\resolv(z)\myXi)\nonumber \\
 & =p^{-1}\sum_{i=1}^{p}\frac{{\bu_i\tps \myXi \bu_i}}{(\lambda_{i}-z)(\lambda_{i}-\myclx)} ,\label{S-eq:alt-trace-init}
\end{align}
Consider 
\begin{equation} \label{S-eq:S-trick}
p^{-1}\mathrm{tr}(\myXi \resolv(\myclx))+zp^{-1}\mathrm{ tr}(\resolv(z)\myXi \resolv(\myclx)).
\end{equation}
Since $\bI+z\resolv(z)=\bS\resolv(z)$, we can write the above as
\begin{align*}
 & p^{-1}\mathrm{tr}((\bI+z\resolv(z))\myXi \resolv(\myclx))\\
 & =p^{-1}\mathrm{tr}(\bS\resolv(z)\myXi \resolv(\myclx)).
\end{align*}
Writing $\bS=\sum_{k=1}^{n}\bc_{k}\bc_{k}\tps$, we get the above is
\begin{align}
 & p^{-1}\sum_{k=1}^{n}\bc_{k}\tps\resolv(z)\myXi \resolv(\myclx)\bc_{k}.\label{S-eq:before-woodbury}
\end{align}
Appling the Woodbury formula again, we obtain
 \begin{align*} 
     \bR(z) = \bRk - \frac{ \bRk\bc_k\bc_k\tps \bRk}{1+\bc_k\tps \bRk \bc_k},
 \end{align*}
 which gives the following expansion of \eqref{S-eq:before-woodbury}:
\begin{align}
 & p^{-1}\sum_{k=1}^{n}\bc_{k}\tps\resolv^{(k)}(z)\myXi \resolv^{(k)}(\myclx)\bc_{k}+\nonumber \\
 & -p^{-1}\sum_{k=1}^{n}\bc_{k}\tps\resolv^{(k)}(z)\myXi \resolv^{(k)}(\myclx)\bc_{k}\left(1-\frac{1}{1+\bc_{k}\tps\resolv^{(k)}(\myclx)\bc_{k}}\right)+\nonumber \\
 & -p^{-1}\sum_{k=1}^{n}\bc_{k}\tps\resolv^{(k)}(z)\myXi \resolv^{(k)}(\myclx)\bc_{k}\left(1-\frac{1}{1+\bc_{k}\tps\resolv^{(k)}(z)\bc_{k}}\right)+\nonumber \\
 & +p^{-1}\sum_{k=1}^{n}\bc_{k}\tps\resolv^{(k)}(z)\myXi \resolv^{(k)}(\myclx)\bc_{k}\frac{\bc_{k}\tps\resolv^{(k)}(z)\bc_{k}}{1+\bc_{k}\tps\resolv^{(k)}(z)\bc_{k}}\frac{\bc_{k}\tps\resolv^{(k)}(\myclx)\bc_{k}}{1+\bc_{k}\tps\resolv^{(k)}(\myclx)\bc_{k}}\nonumber \\
 & =p^{-1}\sum_{k=1}^{n}\frac{\bc_{k}\tps\resolv^{(k)}(z)\myXi \resolv^{(k)}(\myclx)\bc_{k}}{\left(1+\bc_{k}\tps\resolv^{(k)}(z)\bc_{k}\right)\left(1+\bc_{k}\tps\resolv^{(k)}(\myclx)\bc_{k}\right)}.\label{S-eq:expanded-trace}
\end{align}

By calculations similar to the ones in Lemma~\ref{lem:ip-rate},
\[
\frac{1}{p}\sum_{k=1}^p\left[\mathbf{c}_{k}\tps\resolv^{(k)}(z)\myXi \resolv^{(k)}(\myclx)\mathbf{c}_{k}-\frac{\mathrm{tr}(\bfr \resolv(z)\myXi \resolv(\myclx))}{n}\right] = \OPtilde \left(\frac{1}{\eta_{z}\eta_{\myclx}n}\right).
\]
This together with \eqref{S-eq:epsksqone} (and the technicality of extending and applying the boundedness clause of Section~\ref{app:bounded-delta} several times) gives the following estimate for (\ref{S-eq:expanded-trace}):
\[
\frac{1}{p}\frac{\mathrm{tr}(\bfr \resolv(z)\myXi \resolv(\myclx))}{\left(1+\frac{1}{n}\mathrm{tr}(\bfr \resolv(z))\right)\left(1+\frac{1}{n}\mathrm{tr}(\bfr \resolv(\myclx))\right)}+\OPtilde\left(\frac{1}{\eta_{z}\eta_{\myclx} n}\right).
\]
Comparing to (\ref{S-eq:alt-trace-init}) and \eqref{S-eq:S-trick} gives
\begin{align*}
 & \frac{1}{p}\sum_{i=1}^p \frac{\bu_i\tps \myXi \bu_i}{\lambda_i-\myclx}+z\left(p^{-1}\sum_{i=1}^{p}\frac{{\bu_i\tps \myXi \bu_i}}{(\lambda_{i}-z)(\lambda_{i}-\myclx)}\right)\\
  & =\frac{1}{p}\frac{\mathrm{tr}(\bfr \resolv(z)\myXi \resolv(\myclx))}{\left(1+\frac{1}{n}\mathrm{tr}(\bfr \resolv(z))\right)\left(1+\frac{1}{n}\mathrm{tr}(\bfr \resolv(\myclx))\right)}+\OPtilde\left(\frac{1}{\eta_{z}\eta_{\myclx} n}\right)
\end{align*}
so that, using the asymptotic boundedness of $1+\frac{1}{n}\mathrm{tr}(\bfr \resolv(z))$, which follows from the boundedness clause in Section~\ref{app:bounded-delta},
we have
\begin{align*}
 & \frac{1}{p}\mathrm{tr}(\bfr \resolv(z)\myXi \resolv(\myclx)) \nonumber \\
 & =\left(1+\frac{1}{n}\sum_{i=1}^{p}\frac{\bu_i\tps\bfr\bu_i}{\lambda_{i}-z}\right)\left(1+\frac{1}{n}\sum_{i=1}^{p}\frac{\bu_i\tps\bfr\bu_i}{\lambda_{i}-\myclx}\right)p^{-1}\sum_{i=1}^{p}\frac{\lambda_{i}\bu_i\tps \myXi\bu_i}{(\lambda_{i}-z)(\lambda_{i}-\myclx)}
 \\
 & +\OPtilde\left(\frac{1}{\eta_{z}\eta_{\myclx} n}\right),
 \nonumber
\end{align*}
as desired.
\end{proof}

We now find a deterministic limiting expression for \eqref{S-eq:tracio-denom-target-1}. Applying the argument of Section~\ref{sec:app-HS} to the real and imaginary parts of 
\[
n^{-1}\sum_i \frac{\bu_i\tps\bfr\bu_i}{\lambda_i-z}
\]
and using linearity allows us to replace this summation by \(z\mapsto\asp \int d\ulpmeas(\lambda)/(\lambda-z)\), with a negligible error of \(1/(n\min\{\eta_z,\eta_{\myclx}\})\).  Similarly, using partial fractions on the remaining sum and assuming \(\liminf_n |\eta_z/\eta_{\myclx} -1 | \ne 0\) allows us to replace this sum by 
\[
\int \frac{\lambda \xi(\lambda)}{(\lambda-z)(\lambda-\myclx)}w(\lambda)\, d\lambda + O_\prec\left(\frac{1}{\eta_{z}\eta_{\myclx} p}\right).
\]
Expanding the resulting product of three integral expressions and using Fubini's theorem and some algebra gives
\begin{equation}
    \begin{split}
& p^{-1}\tr\left(\myXi f(\bS) \bfr g(\bS)\right)
\label{S-eq:four-trace}
\\
& = \int  f(x)g(x) x \xi(x)\, d\umu(x) 
    \\
    & - \asp \int_{\mathbb{R}^2}   f(x) \frac{g(y)-g(x)}{y-x}\, x\xi(x)\, d\umu(x)d\ulpmeas(y) 
    \\
    & - \asp \int_{\mathbb{R}^2} g(x) \frac{f(y)-f(x)}{y-x}\, x\xi(x)\, d\umu(x)d\ulpmeas(y) 
    \\
    & + \asp^2 \int_{\mathbb{R}^3} \frac{g(u)-g(x)}{u-x} \frac{f(y)-f(x)}{y-x}\, x\xi(x)\, d\umu(x)d\ulpmeas(y)d\ulpmeas(u) + \OPtilde\left( \frac{1}{\eta_z\eta_{\myclx} n}\right). 
\end{split}
\end{equation}

Next, we claim that this identity holds for general regular functions \(\tilde{f}\equiv\tilde{f}_n\) and \(\tilde{g}\equiv\tilde{g}_n\).
For this, we simply integrate against \(\tilde{f}(\eta_z)\) and \(\tilde{g}(\eta_{\myclx})\) and let \(\eta_z\) and \(\eta_{\myclx}\) go to zero at a rate faster than \(n^{-1/3}\) but slower than \(n^{-1/2}\).  This results in the above equality with \(f\leftarrow \tilde{f}*\varphi_{\eta_z}\) and \(g \leftarrow \tilde{g}*\varphi_{\eta_{\myclx}}\), where \(\varphi\) is the Cauchy kernel.  Further, since \(\tilde{f}\) is regular, we have \(\tilde{f}*\varphi_\eta(x)  = \tilde{f}(x) + O_\prec(  n^{2/3}\eta^2)\) uniformly in \(x\), and so we replace these convolutions by \(\tilde{f}\) and \(\tilde{g}\), incurring only \(o_P(1)\) error.  The claim follows since our choice of \(\eta_z,\eta_{\myclx}\) implies that the error of \(O_P(n^{-1+\epsilon}\eta_z^{-1}\eta_{\myclx}^{-1})\) goes to zero in probability for sufficiently small \(\epsilon\).

Thus, we may assume \eqref{S-eq:four-trace} holds for general regular functions \(f\) and \(g\) and the right-hand side of this identity can be factored, giving
\begin{equation} \label{S-eq:def-sigxi}
\mysigma^2(f; \xi) := \int  [\myT f(x)]^2 x \xi(x)\, d\umu(x)
\end{equation}
since linearity of the Hilbert transform can be used to express \(T\) as
\[
\myT f(x) = f(x) - \asp \int \frac{f(y)-f(x)}{y-x} d\ulpmeas(y).
\]
In particular, \eqref{S-eq:four-trace} holds if the right-hand side is replaced by \(\mysigma^2(f)=\mysigma^2(f;\delta)\), as desired.


\section{Consistency of Variance Estimate} \label{app:consistent-variance}

\newcommand{\Topn}{T_n}

Let
 \begin{equation*} \label{S-eq:Hsubw}
\Hsubw f \equiv \Hsubw[f] := \fH[fw]
 \end{equation*}
 and 
\begin{equation*} \label{S-eq:Htildew}
   \Htildew f(x) \equiv  \Htildew[f](x) := p^{-1}\sum_{i=1}^p f(\lambda_i)K_{\Delta\lambda_i}(x-\lambda_i).
\end{equation*}
Intuitively, \(\Htildew f\) uses a kernel to approximate \(\Hsubw f\).
Rewriting \(\myT f\) as \((1+\asp \pi \Hw)f -\asp\pi \Hsubw[f\delta]\) and \(\myTn f\) as \((1+(p/n)\pi \htrans \tilde{w})f - (p/n)\pi \Htildew[f \lwEig]\) and expanding \((\myT f)^2\) and \((\myTn f)^2\) in the expressions for \(\sigma_\infty^2(f)\) and \((\sign)^2\), one of the main analytic steps is to show 
\[
\int \Htildew[f \lwEig](x)x \lwEig(x)\, d\mu_n(x) = \int \Hsubw[f\delta](x)x\delta(x)\, d\umu(x) + o_P(1).  
\]
The remaining terms in \(\sign(f)^2-\sigma_\infty^2(f)\) can be handled similarly.  

Using uniform convergence in probability of \(\lwEig(x)\) to \(\delta(x)\) on \(F\) guaranteed by one of Ledoit and Wolf's nonlinear shrinkage results (reproduced in Theorem~4.3),
all instances of \(\lwEig(x)\) can be replaced by \(\delta(x)\) without affecting the desired result.  \bdrdel{Since Section~\ref{app:bounded-delta} ensures \(\delta(x)\) is smooth and bounded on a neighborhood of \(F\),} The main task is essentially the same as proving
\[
\int \Htildew[f] \, d\mu_n = \int \Hsubw[f] \, d\umu+ o_P(1)
\]
for any regular function \(f\), with only slight modifications needed to account for the differently weighted measures.

We first note \(\int f\, d\mu_n\) is asymptotically equivalent to \(\int f\, d\mu_\infty\). Indeed, by Section~\ref{sec:app-HS}, the error in this approximation is of order
\[
\frac{\lv f \rv_1}{n} + \frac{\lv f'\rv_1}{n} + \frac{\lv f''\rv_1}{n^2} \prec \Delta^2
\]
since \(|f^{(k)}| \prec \Delta^k \) by regularity for \(k\le 3\).

We next show that \(\int \Htildew[f]\, d\mu_\infty\) approximates \(\int \Hsubw[f]\, d\mu_\infty\) in probability.
To simplify our analysis, as before, we make the replacement \( \Delta \lambda_{i} \leftarrow \Delta \) in \(K_{\Delta \lambda_{i}}\) above.  Similar to before, without this replacement \(\Htildew[f](x)\) is a convolution with respect to multiplication rather than addition, so the analyses do not significantly differ in character.

\begin{lem} \label{lem:Htildew-approx}
Assume \emph{[\textsc{Train}]} and that \(f\) is regular.  Then we have the following estimate
    \begin{align*}
        \lv \left(\Htildew[f] - \Hsubw[f]\right)w \rv_q^q \prec \Delta^2
    \end{align*}
    for any finite \(q \ge 1\).
\end{lem}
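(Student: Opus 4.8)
The plan is to follow the proof of Lemma~\ref{lem2} (which is the present statement with \(f\equiv 1\) and no weight), carrying the regular factor \(f\) and the weight \(w\) through the argument. Writing \(\varphi_\Delta(u)=\Delta^{-1}k(u/\Delta)\) and \(\hat\varphi:=\fH\varphi\), one has \(K_\Delta=\hat\varphi_\Delta\), so \(\Htildew[f](x)=(K_\Delta * f\,d\mu)(x)=\fH[\varphi_\Delta * f\,d\mu](x)\) while \(\Hsubw[f](x)=\fH[fw](x)=\fH[f\,d\umu](x)\); since \(\fH\) commutes with convolution,
\[
\Htildew[f]-\Hsubw[f]=\underbrace{\hat\varphi_\Delta * (f\,d\mu)-\hat\varphi_\Delta * (f\,d\umu)}_{\text{law error}}\;+\;\underbrace{\varphi_\Delta * \fH[fw]-\fH[fw]}_{\text{smoothing error}}.
\]
I would bound the \(L^q(w^q\,dx)\)-norm of the two pieces separately, splitting each integral over the bulk region \(\{\kappa(x)>2\Delta\}\) and the edge region \(\{\kappa(x)\le 2\Delta\}\) as in \eqref{eq:kappasmall}--\eqref{eq:w_bound_conv_part_big}.

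For the law error, insert a fixed smooth cutoff \(g\) equal to \(1\) on a neighbourhood of \(\suppumu\) (harmless, since \(\mu,\umu\) are supported there w.h.p.), integrate by parts once in \(t\), and use the small-scale law \eqref{eq:small-scale-mp}, \(\sup_t|(\mu-\umu)((-\infty,t])|\prec n^{-1}\), together with \(\lv(\hat\varphi_\Delta(x-\cdot)fg)'\rv_1\lessapprox\Delta^{-1}|\log\Delta|\) — here \(\lv K_\Delta'\rv_{L^1}\lessapprox\Delta^{-1}\) because \(K'=(\fH k)'\) is integrable (integrable square-root singularity at \(\pm2\), decay \(\sim v^{-2}\) at infinity), and \(\lv f'\rv_\infty\prec\Delta^{-1}\) by regularity. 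This yields the uniform-in-\(x\) bound \(|\hat\varphi_\Delta * (f\,d\mu)(x)-\hat\varphi_\Delta * (f\,d\umu)(x)|\prec\Delta^{-1}|\log\Delta|\,n^{-1}\prec\Delta^2\). Raising to the \(q\)-th power, multiplying by \(w^q\) and integrating over the bounded support of \(w\) then gives a contribution \(\lessapprox\Delta^{2q}\lv w\rv_q^q\lessapprox\Delta^{2q}\le\Delta^2\), since \(\lv w\rv_q^q\le\lv w\rv_\infty^{q-1}\lv w\rv_1<\infty\).

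For the smoothing error, set \(r(x):=\int\varphi_\Delta(x-t)(\fH[fw](t)-\fH[fw](x))\,dt\). The key structural input is that \(fw\) — and hence \(\fH[fw]\), just as \(\fH w\) in Lemma~\ref{lem2} — is analytic in the interior of \(\suppumu\) and, near the edges, vanishes like \(\kappa^{1/2}\) with \(k\)th derivatives \(\lessapprox\kappa^{1/2-k}\); for the regular \(f\) to which the lemma is applied (products \(f_n\lwEig\) built from \(w,\fH w\) and their kernel-smoothed substitutes) this follows from \cite{knowles2017anisotropic} and Lemmas~\ref{lem:lem1}--\ref{lem2}. On \(\{\kappa\le 2\Delta\}\), where \(|\fH[fw](t)-\fH[fw](x)|\lessapprox\Delta^{1/2}\) and \(w(x)\lessapprox\Delta^{1/2}\), the contribution is \(\lessapprox\Delta^{q/2}\cdot\Delta^{q/2}\cdot\Delta=\Delta^{q+1}\le\Delta^2\), the last \(\Delta\) being the Lebesgue measure of the \(2\Delta\)-neighbourhoods of the finitely many edges. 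On \(\{\kappa>2\Delta\}\), a second-order Taylor expansion of \(\fH[fw]\) at \(x\) (the first-order term killed by evenness of \(\varphi_\Delta\), and \(\int\varphi_\Delta(u)u^2\,du\asymp\Delta^2\)) gives \(|r(x)|\lessapprox\Delta^2\kappa(x)^{-3/2}\), so the contribution is \(\lessapprox\Delta^{2q}\int_{\kappa>2\Delta}\kappa^{-3q/2}\kappa^{q/2}\,dx=\Delta^{2q}\int_{\kappa>2\Delta}\kappa^{-q}\,dx\), which is \(\lessapprox\Delta^{2q}\Delta^{1-q}=\Delta^{q+1}\le\Delta^2\) for \(q>1\) and \(\lessapprox\Delta^2|\log\Delta|\prec\Delta^2\) for \(q=1\). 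Adding the two pieces gives \(\lv(\Htildew[f]-\Hsubw[f])w\rv_q^q\prec\Delta^2\).

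I expect the main obstacle to be the edge region \(\{\kappa\lesssim\Delta\}\) and, relatedly, the edge control on \(\fH[fw]\). Away from the edges one needs the bound \(|(\fH[fw])''|\lessapprox\kappa^{-3/2}\), which requires \(f\) to have the \(w\)-adapted near-edge regularity \(|f^{(k)}|\lessapprox\kappa^{-k}\) rather than merely the uniform bound \(|f''|\prec\Delta^{-2}\) that bare regularity supplies — verifying this for \(f=f_n\lwEig\) is where the real work lies. Near the edges, \(w\), \(\fH w\), the factor \(f\), and the kernel \(k\) are all non-smooth (square-root type), so neither the Taylor estimate nor a sup-norm bound on \(\fH[fw]\) is available, and one instead combines the \(\Delta^2\) gain from averaging against \(\varphi_\Delta\) with the \(\kappa^{q/2}\) gain from the weight \(w^q\) (exactly the mechanism of \eqref{eq:kappasmall}--\eqref{eq:w_bound_conv_part_big}); that \(\fH\) is unbounded on \(L^1\) and \(L^\infty\) is what dictates working with the cutoff \(g\) and with \(L^q\)-based bookkeeping throughout.
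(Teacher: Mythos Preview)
Your proposal is correct and follows essentially the same route as the paper: the same law-error/smoothing-error decomposition, the same convolution identity \(K_\Delta * (fw) = k_\Delta * \fH[fw]\) to rewrite the smoothing piece, and the same edge/bulk split with second-order Taylor expansion in the bulk (with the paper handling general \(q\) by interpolation from \(q=1\) via boundedness of the integrand, and you by direct computation). The concern you flag about needing \(|f^{(k)}|\lessapprox\kappa^{-k}\) near the edges---rather than merely \(|f^{(k)}|\prec\Delta^{-k}\) from bare regularity---is real and is exactly the point the paper's proof also elides when it asserts that ``square-root behavior of \(w\) near the spectral edges implies square-root behavior of \(\fH[fw]\) near the spectral edges.''
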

\begin{proof}
Using the notation \(K_\Delta(x) = \Delta^{-1}K(x/\Delta)\) and, we have \(\Htildew[f] = K_{\Delta}*(f\, d\mun)\).
Using the Helffer-Sj\"ostrand argument of Section~\ref{sec:app-HS} and the fact that  
\[
\lv(d/d\lambda)K_{\Delta}(x-\lambda)\rv_1 \lessapprox  1/\Delta
\qquad\text{and}\qquad \lv(d/d\lambda)^2 K_{\Delta}(x-\lambda)\rv_1 \lessapprox 1/\Delta^2,
\]
we get that \(\Htildew f\) can be approximated by
\begin{equation*}
\label{S-eq:continuous-expansion}
\int f(\lambda) K_{\Delta}(x-\lambda)w(\lambda)\, d\lambda = \int f(\lambda) K_{\Delta}(x-\lambda)w(\lambda) \varphi(\lambda)\, d\lambda
\end{equation*}
where \(\varphi\) is a smooth cutoff function with \(\varphi' \le \Delta^{-1}\) and \(\varphi'' \le \Delta^{-2}\) that takes the value 1 on \(\suppumu\) and 0 on \(\bbR\backslash (\suppumu+\Delta)\).
By Section~\ref{sec:app-HS},  then, the error in the above approximation of \(\Htildew f\)  can be written as
\begin{equation*}
    O_\prec\left( \frac{\lv fK\rv_{L^1(\suppumu)}}{n}
    + \frac{\lv f'K+ fK'/\Delta\rv_{L^1(\suppumu)}}{n} 
    + \frac{\lv f''K + f'K'/\Delta + f K''/\Delta^2\rv_{L^1(\suppumu)}}{n^2} \right),
\end{equation*}
which is \(O_\prec\left(\Delta^2\right)\) since \(f''\) is bounded and \(K, K',\) and \(K''\) are integrable on \(\suppumu\). 

We may now use the anti-self-adjointess of the Hilbert transform to obtain from the integral above
\[
\Hsubw[f] * k_{\Delta}(x) = \int  \htrans[fw](\lambda) k_{\Delta}(x-\lambda)\, d\lambda.
\]
Using continuity and/or the properties of approximate identities, this function converges in various modes to the desired function \(\Hsubw f\), but we have yet to establish the desired mode and rate of convergence.

Up to this point, we have shown that \(\Htildew [f]\) is an \(O_\prec(\Delta^2)\) approximation to \(\Hsubw[f]*k_\Delta\) with repect to the Lebesgue measure's infinity norm.  We next show that \(\Hsubw[f]*k_{\Delta}\) converges to \(\Hsubw f\) at a rate of \(O_\prec(\Delta^{2/q})\) in the \(L^q(\umu)\) norm for finite \(q\ge 1\).  That is, 
\begin{equation*} \label{S-eq:1-consistency}
\int |\Hsubw[f] * k_\Delta (\lambda) - \Hsubw f(\lambda)|^q w\, dx \prec \Delta^2.
\end{equation*}
This estimate follows from the fact that the integrand is eventually bounded and for \(q=1\) the left-hand side is bounded by \(O_\prec(\Delta^2|\log\Delta|) = O_\prec(\Delta^2)\) in exactly the same way as in Lemma~\ref{lem:hs_main_result}, since square-root behavior of \(\myw\) near the spectral edges implies square-root behavior of \(\htrans[fw]\) near the spectral edges.
\end{proof}

\section{Alternate significance level calculation} \label{sec:alt-sig-lev}

In this section, we expand upon the discussion at the end of Section~5.1
of the main text to formally demonstrate how to obtain an asymptotic significance level that is independent of sub-Gaussian constants.  



In order to assume as little as possible about the moment structure of $W$, we use a procedure from \cite[Section~3.3]{lopes2019bootstrapping} to estimate the only one which is needed: the fourth moment.  For notational tidiness, we denote the excess kurtosis $\bbE[W^4]-3$ by $\exkurt$.  A formula from \cite[Equation~9.8.6]{bai2010spectral} that holds under our independent-components and spectral assumptions, provided each component has a finite eighth moment,  is, in our notation,
\[
\exkurt = \frac{a_n - 2b_n}{c_n},
\]
where
\[
a_n = \mathrm{var}\left(\left\Vert \bx_1 \right\Vert_2^2 \right)
\]
and
\[
b_n = \left\Vert \bfr_n\right\Vert_{HS}^2
\]
and
\[
c_n = \sum_{j=1}^p \overline{\sigma}_j^4,
\]
where $(\overline{\sigma}_1^2, \overline{\sigma}_2^2, \dots \overline{\sigma}_p^2) = \mathrm{diag}(\bfr_n)$. As discussed in \cite[Section~3.3]{lopes2019bootstrapping}, rate-consistent estimates for the quantities appearing above are
\[
\hat{a}_n :=  \frac{1}{n-1}\sum_{i=1}^n \left( \left\Vert \bx_i\right\Vert^2 -\frac{1}{n}\sum_{j=1}^n \left\Vert \bx_j\right\Vert^2 \right)^2
\]
and
\[
\hat{b}_n :=  \mathrm{tr}(\bS_n^2) - \frac{1}{n} \mathrm{tr}(\bS_n)^2
\]
and
\[
\hat{c}_n = \sum_{j=1}^p \left(\frac{1}{n} \sum_{i=1}^n (\bx_i)_j^2 \right)^2,
\]
where $(\bx_i)_j$ is the $j^\text{th}$ component of $\bx_i$ in the standard observable basis. 
As a result, the authors define the following consistent estimator for $\exkurt$:
\[
\hat{\gamma}_2 := \max\left\{3+\frac{\hat{a}_n-2\hat{b}_n}{\hat{c}_n}, 1 \right\} -3
\]
 (If desired, this estimator can be set to 0 when infinite, but this choice is of no asymptotic importance.)

With this estimator in hand, we can establish the desired asymptotic significance level.  Specifically, as $n,p\to\infty$, the following significance level is asymptotically valid:
\[
    \Pr\left[ \Zanalyticn > \tau \mid \mathcal{H}_0 \right] \le \Phi\left(-\frac{\tau}{\sqrt{2+\max\{0,\hat{\gamma}_2\}}} \right) + o_P(1),
    \]
    where $\Phi$ is the standard normal c.d.f. and $\tau > 0$.  
To see why this holds, let $\bz_n = \bfr_n^{-1/2}\bz_n$. We recall that the variance of \[ \bz_n\tps\tilde{\bfr}_n\bz_n\] can be written as
\begin{equation} \label{S-eq:aaa}
 \exkurt\sum_{i=1}^p (\tilde{\bfr}_n)_{ii}^2 + 2\left\Vert\tilde{\bfr}_n\right\Vert_{HS}^2 ,
\end{equation}
where the matrix elements $(\tilde{\bfr}_n)_{ij}$ are evaluated in the basis with respect to which the noise components are independent.  

If $\exkurt > 0$, the summation multiplying $\exkurt$ in \eqref{S-eq:aaa} can be bounded above using the larger sum \[\
\sum_{i,j=1}^p (\tilde{\bfr}_n)_{ij}^2 =
\Vert \tilde{\bfr}_n\Vert_{HS}^2, \] allowing us to use the estimate $\hat{\gamma}_2$ in place of $\exkurt$.  On the other hand, if $\exkurt \le 0$, the variance in \eqref{S-eq:aaa} can be bounded coarsely by $2\Vert\tilde{\bfr}_n\Vert_{HS}^2$.  Thus, in both cases as $n\to\infty$, the variance or variance-upper-bound is asymptotic to $(2+\max\{0,\hat{\gamma}_2\})p(\sign)^2$.  By the asymptotic equivalence of $\sign$ and $\tilde{\sigma}_n$ (Lemma~5.3),
this is asymptotic to the proposed variance of $(2+\max\{0,\hat{\gamma}_2\})p\tilde{\sigma}_n^2$.

In particular, by observing that bounds just discussed only affect the coefficient of $\hat{\gamma}_2$,
we note that the distribution of $\Zanalyticn$ is close in total-variation norm to the distribution of a standard normal if  $\hat{\gamma}_2$ is small.  This happens, for example, if the first four moments of $W$ match a standard Gaussian's.

While the estimator $\hat{\gamma}_2$ provides an elegant method for data-driven size control in the proportional growth regime, we note two practical considerations relating to its implementation. First, because the variance of $\hat{\gamma}_2$ depends fundamentally on the eighth moment of $W$, $\hat{\gamma}_2$ may high variance and converge slowly.  Second, because consistency requires this eighth moment to be finite, $\hat{\gamma}_2$ may not be reliably computable for the heavy-tailed distributions (such as the Student-$t$ models $t_4$ and $t_6$) evaluated in Section~\ref{app:heavy-tail}.


\bdradd{
%
}

\section{Proof of Optimal Limiting Shrinkage (Theorem~5.6(b))}
\label{app:nonsingular-case}
We prove the theorem for a slightly more general objective function.  Let
\begin{equation*} \label{S-eq:Utilde-infty}
\fU(f; \xi_\infty) = \frac{\int f\, d\omegainfty}{\sigma_\infty(f; \xi_\infty)},    
\end{equation*}
where \(\sigma_\infty(\mydot; \xi_\infty)\) and \(\xi_\infty\) were defined in \eqref{S-eq:def-sigxi}.
This objective function arises, for instance, in the work of Ledoit and Wolf on financial portfolio optimization, in which case \(\xi_\infty = \omegainfty \equiv 1\), though we will not pursue this case in more detail.
More suggestively, we may re-express \(\fU(f; \xi_\infty)\)  as follows:
\begin{equation*}
    \frac{\la f, \omegafcn \ra}{\sqrt{\la M_{\mya}\myT f, \myT f \ra}},
\end{equation*}
where \(\la \mydot \ra\) is the inner product on \(\tilhilb \times \tilhilb\) and \(a(x)\) is generally \(x w(x) \xi_\infty(x)\).  Taking \(A = \myT \tps M_{\mya} \myT : \tilhilb\to \tilhilb\), where \(\myT \tps\) is the transpose of  \(\myT:\tilhilb\to\tilhilb\), Cauchy-Schwarz implies that this ratio is maximized precisely when
\[
f = A^{-1}\omegafcn .
\]

Inversion of the operator \(A\) can be performed in two main steps.  First, writing \(A^{-1} =T^{-1} M_{\mya}^{-1} (\myT \tps)^{-1} \), we would like to invert \(\myT \tps\).
A next, minor step is to multiply by the reciprocal of the function \(a(x)\).  The second major step is to invert \(T\), which can be done by writing \(T^{-1} = ((\myT \tps)^{-1})\tps\)---just the adjoint of the previously found inverse \((\myT \tps)^{-1}\).

For the first main step, observe that
\(T\) takes the following form:
\begin{align*}
\myT f(x) & =  b(x)f(x) - \asp\, \text{p.v.}\int \frac{f(y)}{y-x}\, d\ulpmeas(y) \\
&=  b(x)f(x) - \asp\pi\htrans[f\delta w](x) 
\end{align*}
where
\[
b(x):=1+\asp\pi\htrans[\delta w](x).
\]
Thus, 
\begin{align} \label{S-eq:bfBF}
    \myT \tps f(x) = 
    b(x)f(x) - B(x)\htrans f(x),
\end{align}
where 
\begin{equation*}
    B(x) := -\asp\pi\delta(x)w(x).
\end{equation*}
In order to invert \(T'\),
we need a lemma. 
\begin{lem} \label{lem:first-inversion}
The equation \(\myT \tps f = \varphi \in \tilhilb\) 
has the unique solution
\begin{equation*} \label{S-eq:spec-soln-Tprime}
f = g\varphi + G\htrans \varphi,
\end{equation*}
where \(g(x) = 1-\asp
-\asp\pi x\Hw(x)\) and \(G(x) = -\asp\pi x w(x)\).
\end{lem}
\begin{proof}
We show this in two steps.   First, we simplify \(b\) by considering the analytic signal \(b+iB\) and its extension to the complex upper half-plane.  Second, we invert 
\(T'\)
on \(\tilhilb\) by solving a singular integral equation.

First we simplify \(b\). 
Using the result from the Ledoit--P\'ech\'e theorem (Theorem~4.1)
that for \(x\in F\),
\[
\lim_{\eta\to 0^+} \mathrm{Im}\left[\uTheta(x+i\eta)\right] = \pi \delta(x)w(x) 
\]
Thus, by the properties of the Hilbert transform, \(\pi\htrans[\delta w]\) is given by the limiting real part of \(\uTheta(z)\) as \(\mathrm{Im}z\to 0^+\), so that \(b\) can be calculated using the following  identity 
(see (3.6)):
    \begin{equation*}
    \begin{split}
b(x) & = \lim_{\eta\to 0^+} \mathrm{Re}\left[1+\asp\uTheta(x+i\eta)\right] 
\\
& = \mathrm{Re}\lim_{\eta\to 0^+} \frac{1}{1-\asp-\asp z \um(z)}
\\
& = \mathrm{Re}\left[\frac{1}{1-\asp-\asp x \brevem(x)}\right]
\\
& = \frac{1-\asp-\asp x\mathrm{Re}[\brevem(x)]}{|1-\asp-\asp x\brevem(x)|^2}.
\end{split}
    \end{equation*}
    By the fact that \(\brevem(x) = \pi\Hw(x) + i\pi w(x)\), the above can be written 
    \begin{equation*}
            \frac{1-\asp -\asp \pi x \Hw(x)}{(1-\asp -\asp \pi x \Hw(x))^2 + \asp^2\pi^2 x^2 w(x)^2}.
    \end{equation*}

Assume \(\myT \tps f = \varphi \in \tilhilb\). 
Using the fact that 1 
has vanishing Hilbert transform
and \(\htrans\) is an anti-involution, we observe that \(B(x)=\htrans b(x)\). 
Thus, we may take the Hilbert transform of \eqref{S-eq:bfBF} and use the identity \( \htrans[b f - B\htrans f ]=B f + b\htrans f \) of \cite{carton1977product} to obtain a second equation \(\varphi\) satisfies:
\begin{equation*}
    B f + b\htrans f  = \htrans\varphi.
\end{equation*}
Together with \eqref{S-eq:bfBF}, this yields a two-by-two linear system for \(f\) and \(\htrans f\), which can be solved for \(f\) as follows:
\begin{equation*}
    \Tprimeinv\varphi = f = \frac{b\varphi + B\htrans\varphi}{b^2 + B^2}.
\end{equation*}
In order to simplify the coefficients \(b/(b^2+B^2)\) and \(B/(b^2 + B^2)\), observe that
\[
b(x)^2+B(x)^2 =  \frac{1}{(1-\asp-\asp \pi x \Hw(x))^2 + \asp^2\pi^2 x^2 w(x)^2},
\]
so that
\begin{equation*}
    \frac{b(x)}{b(x)^2+B(x)^2} =     1-\asp-\asp\pi x \Hw(x) = g(x)
\end{equation*}
and 
\begin{equation*}
    \frac{B(x)}{b(x)^2+B(x)^2} = -\asp\pi x w(x) = G(x),
\end{equation*}
by the definitions of \(g\) and \(G\). In other words, 
\begin{equation*} \label{S-eq:inverse-1}
    \Tprimeinv\varphi = g \varphi + G\htrans\varphi,
\end{equation*}
as desired.
\end{proof}

As a result of the above lemma, we may easily apply \(\Tprimeinv\) to \(\omegafcn\).  The next step is to apply in the inverse of \(M_{\mya}\) to the result, obtaining
\begin{align}
    \fstar & := M_{\mya}^{-1}\Tprimeinv \omegafcn \nonumber
    \\ & = \frac{g \omegafcn}{a} + \frac{ G H}{a}. \label{eq:f_sub_star}
\end{align}

The above lemma also enables us to easily compute  \((\Tprimeinv)\tps\), which coincides with \(T^{-1}\), the final operator we must apply.  Using the anti-self-adjointness of \(\htrans\) and the self-adjointness of multiplication operators, one obtains that for \(\varphi \in \tilhilb\)
\[
T^{-1}\varphi = (\Tprimeinv)\tps\varphi = g\varphi - \htrans\left[G\varphi\right].
\]
Thus,
\begin{align*}
    A^{-1}\omegafcn & = T^{-1}\fstar
    \\
    & = g\fstar - \htrans\left[G\fstar\right]
    \\
    & = g\frac{g\omegafcn + GH}{a} - \htrans\left[G\frac{g\omegafcn + GH}{a} \right],
\end{align*}
which has the desired form when \(\xi_\infty(x) = \delta(x)\).

\section{Consistency of Proposed Shrinker} \label{app:consistency}

As in previous appendices, we will use \(\mywt(x)\) and \(\Htildew f\) to denote the additive convolutions of \(\mu\) with \(k\) or \(f\,d\mu\) with \(K\) rather than the multiplicative ones, with the understanding that our results can easily be adapted to the multiplicative versions.

The main idea 
will then be to show in-probability convergence of \(p^{-3/2}\tr( \shrinker \myOmega_n)\) to \(\int f^*\, d\omegainfty \) and \(\sign(f_n)^2\) to \(\mysigma^2(f^*)\). 
For the former, [\textsc{Test}] and Cauchy-Schwarz imply that the identity \(\lv (f_n - f^*)w \rv_2 \to 0\) in probability  is sufficient.
For the latter, by the triangle inequality, it suffices to show that  \(\mysigma^2(f_n-f^*,f_n-f^*)\to  0\) in probability.  By using the expression of \(\mysigma^2(\mydot)\) in terms of the bounded operator \(\myT:\tilhilb\to\tilhilb\), we see that this also follows if \(\lv (f_n - f^*)w \rv_2 \to 0\) in probability.

As in previous appendices, we will now suppress subscripts of \(n\) wherever they are unnecessary.  To establish this norm's convergence, observe that from Section~\ref{app:nonsingular-case}, \(f^*\) takes the form of \(\psi_1\overline{h} + \psi_2\Hsubw[\psi_3\overline{h}] + \Hsubw[\psi_4\Hsubw[\overline{h}]],\) where  \(\psi_j\) are non-singular functions depending on \(\htrans w \), and that  \(f_n\) can be obtained by simply replacing each instance of \(\Hsubw\) with \(\Htildew\), \(\htrans w\) with \(\Hwt \), and   \(\overline{h}(\lambda_{nj})\) with \(\myhnj\) for all \(j\).  
However, we ignore this last step since our main approximation (see~(5.13) in Theorem~5.8)
and boundedness of \(\psi_j\) imply that identifying \(\overline{h}(\lambda_{nj}) =\myhnj\) can be done without loss of generality and without changing convergence in probability.   Thus, we must analyze terms of the form \(\Hsubw[\psi_4 \Hsubw[\overline{h}]] - \Htildew[\tilde{\psi}_4\Htildew[\overline{h}]]\), where \(\tilde{\psi}_4\) is obtained from \(\psi_4\) by substitution of \(\Hwt\) for \(\Hw\).  To shorten our argument, we will analyze only the term just given: the other terms can be analyzed similarly. 

Thus, for brevity's sake, we define \(\psi=\psi_4\Hsubw[\overline{h}]\) and \(\tilde{\psi} = \tilde{\psi}_4 \Htildew[ \overline{h}]\). Since \(\psi\) and \(\tilde{\psi}\)  are regular,  Lemma~\ref{lem:Htildew-approx} gives
\begin{align*}
    & \lv \left(\Htildew \tilde{\psi} -\Hsubw\psi\right) w \rv_2
    \\
    & \prec \lv \left(\Hsubw \tilde{\psi} - \Hsubw\psi\right) w\rv_2,  
    \end{align*}
This inequality can be continued as
    \begin{align*}
    & \prec 
     \lv \left( \tilde{\psi} - \psi\right)w \rv_2 
     \\
     & = \lv \left( \tilde{\psi}_4\Htildew \overline{h} - \psi_4 \Hsubw \overline{h} \right) w\rv_2
     \\
     &
     \prec
     \lv  \Htildew \overline{h} - \Hsubw \overline{h} \rv_2 
     \\
     & 
\prec
     \Delta,
\end{align*}
where we have used Lemma~\ref{lem:Htildew-approx} in the last step and Section~\ref{app:lw-rate} in the previous step, together with the fact that \(\psi_4\), \(\tilde{\psi}_4 \), and \(\overline{h}\) are almost surely bounded as \(n\to\infty\).  The proof is complete.


\section{Additional plots for heavy-tailed data} 
\label{app:heavy-tail}

While a major assumption of our approach is that data possess ``light-tailed'' behavior, such as sub-Gaussian, sub-exponential, or further relaxations, a reasonable question is how robust the paper's methods are against heavy-tailed data.  To investigate this question, we include in Figure~\ref{fig:isotrue_nu4} and \ref{fig:isofalse_nu4} simulations that mimic those in 
Figures~4 and 5,
but assume independent noise components with a Student-$t$ distribution $t_\nu$, having $\nu >2$ degrees of freedom.  In particular, we show the effect of decreasing the number of degrees of freedom from 8 to 6 to the challenging case of 4.  (Quadratic forms do not necessarily have finite variances for $\nu$ less than 4, challenging asymptotic normality and subsequent analysis.)

\begin{figure}[htbp]
\centering
\includegraphics[width=\columnwidth]{figures/scree1e2.pdf}
\caption{Scree plot of population covariance matrix chosen to generate artificial data, \(\kappa=10^2\).
}
\label{S-fig:scree1e2}
\end{figure}

\begin{figure}[htbp]
    \centering
    {\large\textbf{Robustness to Heavy-Tailed Data, $\myOmega\propto \mathbf{I}$}}\\[1ex]
        \includegraphics[width=\linewidth]{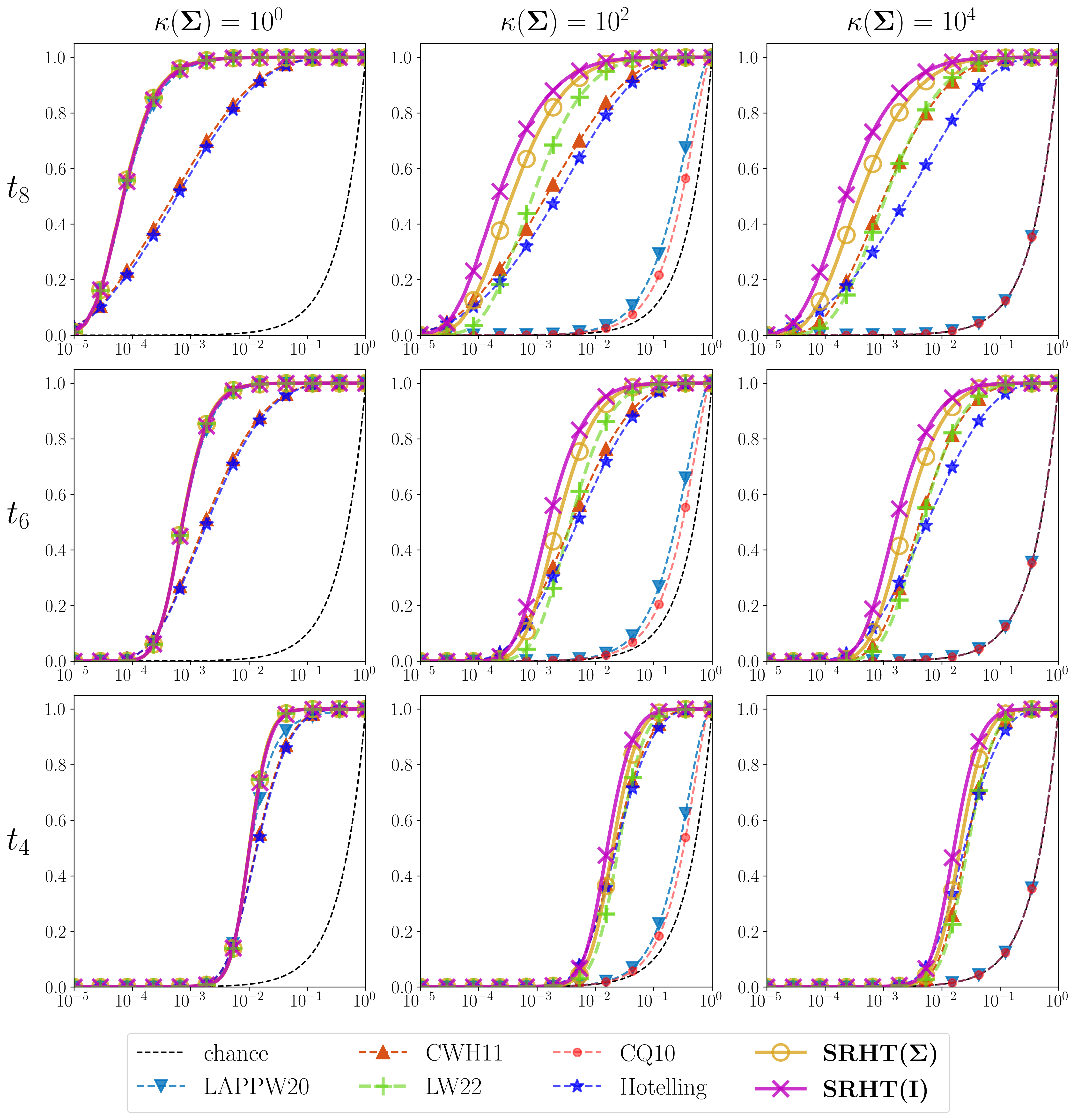}
        \caption{
        Size-adjusted empirical power of several methods, where $p=200$, $n=300$, the noise component $W\sim t_\nu$ (Student-$t$ model with $\nu$ degrees of freedom), and the true signal-dispersion matrix $\myOmega$ proportional to $\mathbf{I}$.
        \textit{From top to bottom}: The effect of increasing tail weight on algorithm performance.
        \textit{From left to right}: SRHT(\textbf{I}) increasingly outperforms the other baselines as the spectral complexity increases. SRHT($\bfr$) remains competitive throughout, though less so than SRHT(\textbf{I}). 
        }
    \label{fig:isotrue_nu4}
\end{figure}

\begin{figure}[htbp]
    \centering
    {\large\textbf{Robustness to Heavy-Tailed Data, $\myOmega\propto \bfr$}}\\[1ex]
        \includegraphics[width=\linewidth]{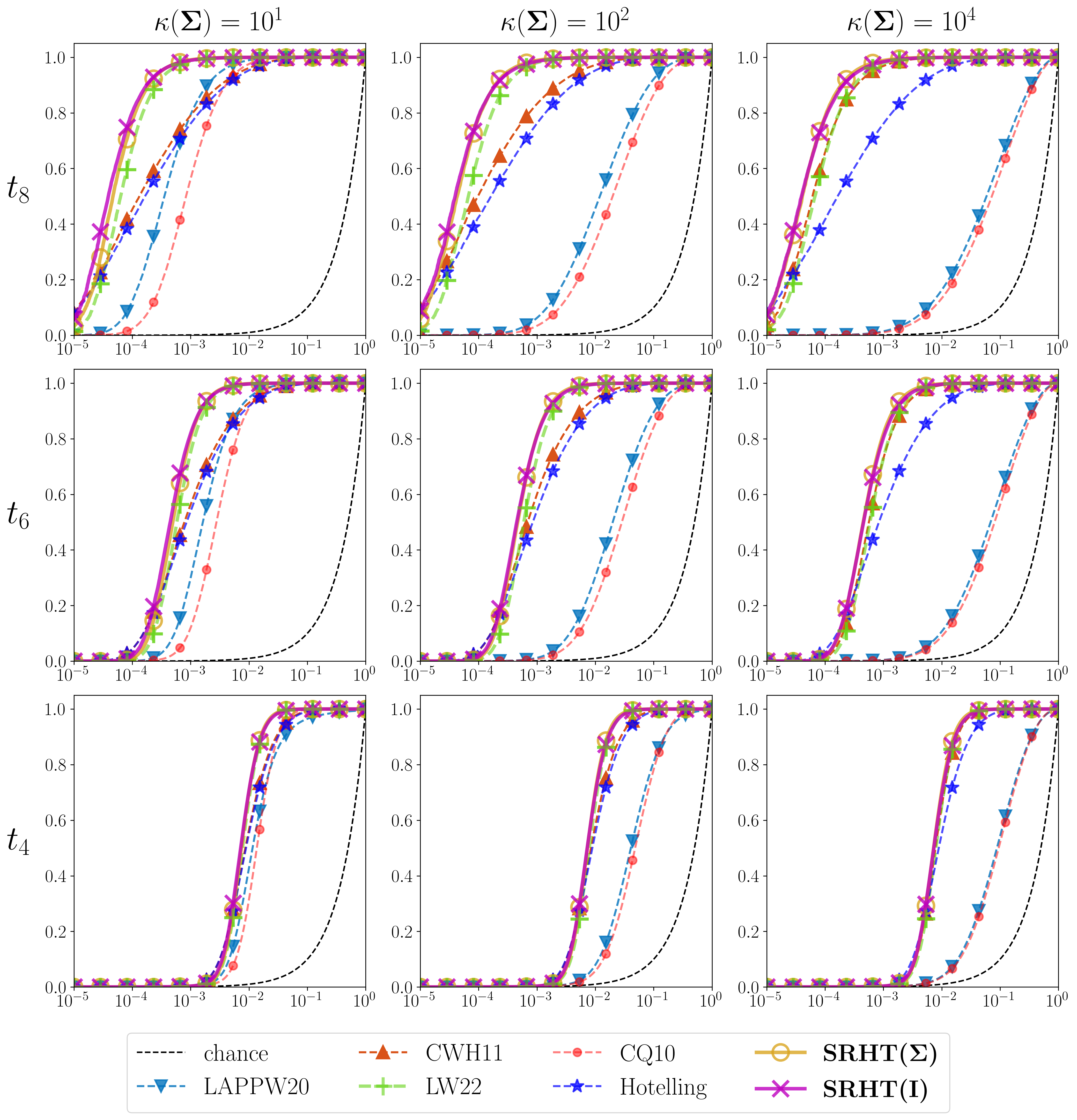}
        \caption{
        An analogous figure to Figure~\ref{fig:isotrue_nu4}, except that $\myOmega \propto \bfr$ and $\kappa(\bfr)$ begins at $10^1$ rather than $10^0$.  Our methods roughly match or outperform the competition, although SRHT($\bfr$) is slightly outperformed by SRHT(\textbf{I}) for $\kappa=10^1$.  This could indicate that if $\myOmega$ is well-conditioned enough, assuming $\myOmega=\mathbf{I}$ is a suitable choice.
        }
    \label{fig:isofalse_nu4}
\end{figure}

Recall that the population covariance matrices $\bfr$ are parameterized by a number $\kappa = \kappa(\bfr) \ge 1$.  When $\kappa = 1$, the matrix $\bfr$ is simply the identity.  When $\kappa  > 1$, the matrix $\bfr$ is taken to have piecewise log-linear spectrum of the form \(\{\kappa^{ i/40}\}_{i=1}^{40}\cup\{10^{ (i-1)/(40(p-41))}\}_{i=1}^{p-40}\).  (In both cases, $\kappa$ corresponds roughly to the condition number of $\bfr$.) We include an example of the latter type of spectrum in Figure~\ref{S-fig:scree1e2}, which can also be found in the paper's main body.  Otherwise, our data-generating procedure follows precisely the one outlined in Section~6, only choosing the Student-$t$ option rather than the sub-Gaussian one. 


  We present the result of our simulations in a grid, where condition number (and spectral complexity) increases from left to right, and $\nu$ decreases from top to bottom.  From these plots, we observe that our methods roughly match or exceed the competition even in the heavy-tailed regime.

For completeness, we present similar plots for the \textit{singular} case, where $p > n$, in Figures~\ref{fig:isotrue_nu4_n100} and \ref{fig:isofalse_nu4_n100}, but first we outline the formulae we propose to use in that case.

\section{Outline of shrinkage method for $p > n$} \label{app:sample-starved}

When $\asp > 1$, the limiting sample spectral distribution of $\bS_n = p^{-1}\bX_n\bX_n\tps$ becomes \textit{singular}, having a point mass at zero of weight $(1-1/\asp)$ and an absolutely continuous part supported on $E := F\backslash\{0\} \subset (0, \infty)$ with density function $w$. 

A shrinker in this regime may be defined as a pair of continuous functions $f = (f_c, f_s)$, labeled for ``continuous'' and ``singular,'' respectively. The singular component of this pair is only defined in a compact neighborhood of 0 disjoint from $E$, and the continuous component can be applied to any $\lambda\in E$.  As always, both functions are allowed to tacitly depend on global limiting spectral quantities ($F$, $\myw$, $\Hw$, \dots) as well as their main argument.

As in the non-degenerate case of Section~\ref{app:nonsingular-case}, the optimal shrinker is derived by maximizing the limiting utility functional $(\int f\, d\omega_\infty)^2 / \int [\Gamma f(x))]^2 x\xi(x)\, d\umu(x)$, which leads to another Cauchy--Schwarz argument. To accommodate the newly appearing point mass in $\nu_\infty$ in $\Gamma f$, the forward operator $T$ must be extended to an operator $T_+$ acting on pairs $f=(f_c, f_s)$:
\begin{align*}
T_+ f(x) = T f_c(x) + c_0 \frac{f_s(x)}{x},
\end{align*}
where $c_0 = (\asp-1)\delta(0)$, and $\delta(0)$ is the  Ledoit--P\'ech\'e $\delta$ shrinkage function at zero. Ignoring a full specification of its codomain, the adjoint operator $T_+\tps$ acting on a test function $\varphi$ maps to the pair
\[
T_+\tps \varphi = \left(  T\tps \varphi ,\, c_0 \int_E \frac{\varphi(x)}{x} \, dx \right).
\]

The optimal shrinker is the solution $f^o = (f^c, f^s)$ to the extended system $T_+\tps M_\alpha T_+ f = (h_c, h_s)$, where $h_s\equiv \la\delta_0, d\omega \ra$ and $h_c = d\omega/dx$, and $M_\alpha$ is again the multiplication operator by the function $\alpha(x) = x \xi(x) w(x)$.  Recall that for the purpose of this paper's hypothesis test $\xi(x)$ must be instantiated as $\delta(x)$.

While a fully rigorous inversion of this operator system requires a careful treatment of the relevant function spaces, the formal algebraic inversion follows steps analogous to those in Section~\ref{app:nonsingular-case}. Specifically, extending the inversion from Lemma~\ref{lem:first-inversion}, one finds that applying the generalized inverse to the $(h_c, h_s)$ yields $(T_+\tps)^{-1} (h_c, h_s) = (T\tps)^{-1}h_c + \asp h_s w$. 

Subsequent application of $M_\alpha^{-1}$, then $T_+^{-1}$ yields
\begin{equation} \label{eq:f_opt_s}
f^s(0) = \asp \int_E f_*(x)w(x)\, dx+ \asp^2 h_s \int_E \frac{w(x)}{x\xi(x)} \, dx,
\end{equation}
where $f_*$ was defined in Appendix~\ref{app:nonsingular-case}. To outline a special case, if $h_c \equiv 1$, $h_s = 1-1/\asp$, and $\xi \equiv 1$, the first integral in \eqref{eq:f_opt_s} reduces to 0, and the second integral evaluates to $\pi \mathcal{H}w(0)$. Using the relationship between the un-normalized density $w$ and the normalized one $\underline{w} = \asp w$, this simplifies exactly to $f^s(0) = 1/\delta(0)$, successfully recovering the standard Ledoit--P\'ech\'e shrinkage for degenerate eigenvalues.

With this singular component in hand, solving the system for the continuous component $f^c$ follows much as before, and yields a correction to the continuous component $f^*(x)$ appearing in the non-singular case:
\begin{equation} \label{eq:f_opt_c}
f^c(x) = f^*(x) + \asp^2 h_s \pi  \left( \mathcal{H}\left[ \frac{w}{\xi} \right](x) - \frac{\mathcal{H}w(x)}{\xi(x)} \right). \qquad (x > 0)
\end{equation}

Equations \eqref{eq:f_opt_s} and \eqref{eq:f_opt_c} provide the exact analytical formulas for which we now propose finite-sample analogues.

To approximate $f^c(\lambda_i)$, we must make a few finite-sample replacements.  For example, we replace $\htrans_w[f] = \htrans[fw]$ for sufficiently regular $f$, including $\htrans_w[1] = \htrans w$, with an extension of the definition of $\tilde{\htrans}_w[f]$:
\[
\overline{\htrans}_w[f] := p^{-1}\sum_{i=(p-n)^+ + 1}^p f(\lambda_i) K_{ni}(x-\lambda_i).
\]
Additionally, we replace $\asp$ with $p/n$, and estimate $h_s$ with the presumably given quantity $(1-1/\asp)\overline{h}_n(0)$, again equating $p/n$ with $\asp$.  This gives the approximation
\begin{equation} \label{eq:fc}
f^c_{n}(\lambda_i) = f_n(\lambda_i) + \frac{(p-n)^+}{n} \overline{h}_n(0) \pi
\left(
\overline{\htrans}_w 
\left[ \frac{1}{\xi}\right](\lambda_i) - \frac{\htrans \tilde{w}(\lambda_i)
}{\xi(\lambda_i)}\right), 
\end{equation}
where $f_n$ is given in our main approximation theorem (Theorem~5.8).
Next, to approximate $f^s(0)$, we simply use the fact that for a test function $\varphi$, the integral $\int \varphi(x) w(x)\, dx$ is the almost sure limit of $p^{-1} \sum_{i=(p-n+1)^+}^p \varphi(\lambda_i)$.  This yields the approximation:
\begin{equation} \label{eq:fs}
f^s_n(0) = \sum_{i=(p-n+1)^+}^p \left[ 
\frac{f_{*,n}(\lambda_i)}{n} + \frac{(p-n)^+}{n^2} \overline{h}_n(0) \frac{1}{\lambda_i \xi(\lambda_i)}
\right],
\end{equation}
where 
\begin{equation} \label{eq:f_substar_n}
f_{*,n}(x) := \frac{g_n(x)\overline{h}_n(x) + \overline{G}_n(x)H_n(x)}{x \tilde{d}_n(x)},
\end{equation}
 as inspired by \eqref{eq:f_sub_star}.

For finite sample sizes, the theoretically optimal shrinker $f^o$ can potentially take on negative values at zero or on a neighborhood of the limiting spectral support $F$. 
Thus, it is advantageous and makes no difference asymptotically if we threshold all eigenvalues $f^o(\lambda_j)$ from below by zero.  However, this threshold is rather conservative, and it is possible finite-sample performance could be further optimized by a tighter one.  We leave investigation of this possibility to future work.

The proposed approximations of $f^s$ and $f^c$ described above
are implemented in Python code in Listing~\ref{list:python}.  Figures~\ref{fig:isotrue_subg_lss} and \ref{fig:isofalse_subg_lss} give examples of empirical ROC curves for the case where $p=200$ and $n=100$.  A full convergence analysis would involve extensive functional-analytic complications and remains future work, but the simulations indicate that the proposed shrinkage methods achieve first-order optimality, remaining highly competitive despite possessing a potentially larger finite-sample variance than highly-tuned approaches like \lwalg{}.


\definecolor{codegreen}{rgb}{0,0.6,0}
\definecolor{codegray}{rgb}{0.5,0.5,0.5}
\definecolor{codepurple}{rgb}{0.58,0,0.82}
\definecolor{backcolour}{rgb}{0.96,0.96,0.96}

\lstdefinestyle{pythonstyle}{
    commentstyle=\color{codegreen},
    keywordstyle=\color{blue},
    numberstyle=\tiny\color{codegray},
    stringstyle=\color{codegreen},
    basicstyle=\ttfamily\footnotesize,
    breakatwhitespace=false,         
    breaklines=true,                 
    captionpos=b,                    
    keepspaces=true,                 
    numbers=left,                    
    numbersep=5pt,                  
    showspaces=false,                
    showstringspaces=false,
    showtabs=false,                  
    tabsize=4,
    mathescape=true
}
\lstset{style=pythonstyle}

\newtcolorbox{codecontainer}{
    colback=backcolour,
    boxrule=0.5pt,
    colframe=backcolour,
    sharp corners,
    enhanced,
    breakable,
    left=0pt, right=0pt, top=0pt, bottom=0pt
}

\include{lstlisting}

\begin{codecontainer} 
\begin{lstlisting}[language=Python, caption={Finite-sample implementation of the optimal shrinker, including for \(p > n\).}, label=list:python] 
import numpy as np

def optimal_precision_shrinkage(p, n, nz_lams, prior_type, hbar=None, xi=None):
    r"""
    $\color{codepurple}\textbf{\text{Description}}$: Proposed nonlinear precision shrinkage estimator, generalized to the $\color{codegreen}p > n$ regime but also valid for $\color{codegreen}n < p$.  
    $\color{codepurple}\textbf{\text{Output}}$: (p x 1) array $\color{codegreen}f=\color{codepurple}f^o(\lambda_i)$ (f_opt) that approximately optimizes
    $\color{codegreen}\mathrm{tr}(f(\bS)\myOmega)^2/\tr(\bfr f(\bS) \myXi f(\bS))$ (appropriately scaled) in the limit as $\color{codegreen}n,p\to\infty$ and $\color{codegreen}p/n\to\phi$, where $\color{codegreen}\lambda_i$ are the eigenvalues of the sample covariance matrix $\color{codegreen}\bS$, in descending order.
    $\color{codepurple}\textbf{\text{Inputs}}$: 
    - Dimension p, sample size n
    - nz_lams: descending nonzero sample eigenvalues.
    - prior_type (signal prior): 'isotropic', 'matched' (covariance-matched), or determined by hbar if any other argument is given
    - hbar[i]: estimated signal-prior spectral parameters of $\color{codegreen}\myOmega$: $\color{codegreen} \overline{h}(\lambda_i) = d\omega_\infty/d\mu_\infty(\lambda_i)$
    - xi[i]: estimated spectral parameters of $\color{codegreen}\myXi$: $\color{codegreen}\xi_\infty(\lambda_i)$, defined analogously for the matrix $\color{codegreen}\myXi$ to how $\color{codegreen}\overline{h}(\lambda_i)$ is defined for $\color{codegreen}\myOmega$
    """
    
    m = len(nz_lams)
    phi = p / n
    hn = p**(-1/3) # Bandwidth for kernel estimation

    # Semi-circular kernel and its Hilbert transform
    def k_func(x):
        return 1 / (2 * np.pi) * np.sqrt(np.maximum(4 - x**2, 0))
    def Hk_func(x):
        return (-x + np.sign(x) * np.sqrt(np.maximum(x**2 - 4, 0))) / (2 * np.pi)

    # Matrices for taking quick convolutions with k(x) and Hk(x)
    lam_diffs = nz_lams[:, None] - nz_lams[None, :]
    arg_array = lam_diffs / (nz_lams[None, :] * hn)
    Hk_arr = Hk_func(arg_array)
    k_arr = k_func(arg_array)

    # Estimate density $\color{codegreen}w(x)$ and its Hilbert transform $\color{codegreen}\mathcal{H}w(x)$ at each $\color{codegreen}\lambda_i \ne 0$
    Hw = 1 / (p * hn) * np.sum(Hk_arr / nz_lams[None, :], axis=1)
    w = 1 / (p * hn) * np.sum(k_arr / nz_lams[None, :], axis=1)

    # Construct $\color{codegreen}g(x)$ and denominator norm squared $\color{codegreen}b(x)^2 + B(x)^2$
    g_func = np.maximum(1 - phi, 0) - phi * np.pi * nz_lams * Hw
    norm_sq = g_func**2 + (phi * np.pi * nz_lams * w)**2

    # Nonlinear shrinkage function d, or $\color{codegreen}\tilde{d}_n(x)$, of (Ledoit-Wolf, 2020)
    d_nz = nz_lams / norm_sq # Non-degenerate sample eigenvalues d_nz
    Hw_zero = (1 - np.sqrt(max(1 - 4 * hn**2, 0))) / (2 * np.pi * n * hn**2) * np.sum(1.0 / nz_lams)
    d_zero = n / (np.pi * max(p - n, 1) * Hw_zero) # Degenerate ei-val
    d = np.concatenate([d_nz, np.repeat(d_zero, max(0, p - m))])
        
    # Assign xi and hbar (based on prior_type string)
    if prior_type == 'matched':
        hbar = d
    elif prior_type == 'isotropic':
        hbar = np.ones(p)
    elif hbar is None:
        exit("Error: Invalid prior type or no value given for hbar")
    if xi is None: # Default: In present setting $\color{codegreen}\myXi = \bfr$
        xi = d_nz # $\color{codegreen}\xi(x) =\tilde{d}_n(x)$

    # Estimate Hilbert transform of $\color{codegreen}h^c$
    H_hc = 1 / (p * hn) * np.sum(Hk_arr * (hbar[:m] / nz_lams)[None, :], axis=1)
    Gbar = - phi * np.pi * nz_lams

    # Function $\color{codegreen}f_{*,n}$ from $\eqref{eq:f_substar_n}$
    f_sub_star = (g_func * hbar[:m] + Gbar * H_hc) / (nz_lams * xi[:m])
    H_lambda_fsubstar = 1 / (p * hn) * np.sum(Hk_arr * f_sub_star[None, :], axis=1)

    # Approximate deterministic limiting solution $\color{codepurple}f^*$ from Section $\ref{app:nonsingular-case}$
    f_opt = g_func * f_sub_star + phi * np.pi * H_lambda_fsubstar
    # Note: 2nd term equivalent to -H_w[Gbar*f_sub_star] by def of Gbar

    # Corrections for the singular regime ($\color{codegreen}\phi = p/n > 1$)
    if phi > 1:
        h_s = 1/p * np.sum(hbar[n:p]) # Weight for null eigenvalues
        H_w_over_xi = 1 / (p * hn) * np.sum(Hk_arr / (xi * nz_lams)[None, :], axis=1)
        
        # Adjustment to obtain $\color{codegreen}f^c_n(\lambda_i)$, as in $\eqref{eq:fc}$: Note that $\color{codegreen}g(x)/x$ equals $\color{codegreen}-\phi \pi \mathcal{H}w(x)$ on $\color{codegreen}F$
        f_opt += phi * h_s * (g_func / nz_lams / xi + phi * np.pi * H_w_over_xi)
        
        # Equation $\eqref{eq:fs}$ implementation: For scalar component $\color{codegreen}f^s_n(0)$, use empirical averages np.mean() and np.sum()
        f_s = phi**2 * h_s * 1/p * np.sum(1 / (xi * nz_lams)) + 1 / n * np.sum(f_sub_star)
        f_opt = np.concatenate([f_opt, np.repeat(f_s, p - m)]) 
        
    # Return $\color{codegreen}\max\{f^o, 0\}$ to ensure positivity
    return np.maximum(f_opt, 0)

\end{lstlisting}
\end{codecontainer}

\begin{figure}[htbp!]
\centering
{\large\textbf{Singular Regime, sub-Gaussian Data, $\myOmega\propto\mathbf{I}$}}\\[1ex]
\includegraphics[width=\linewidth]{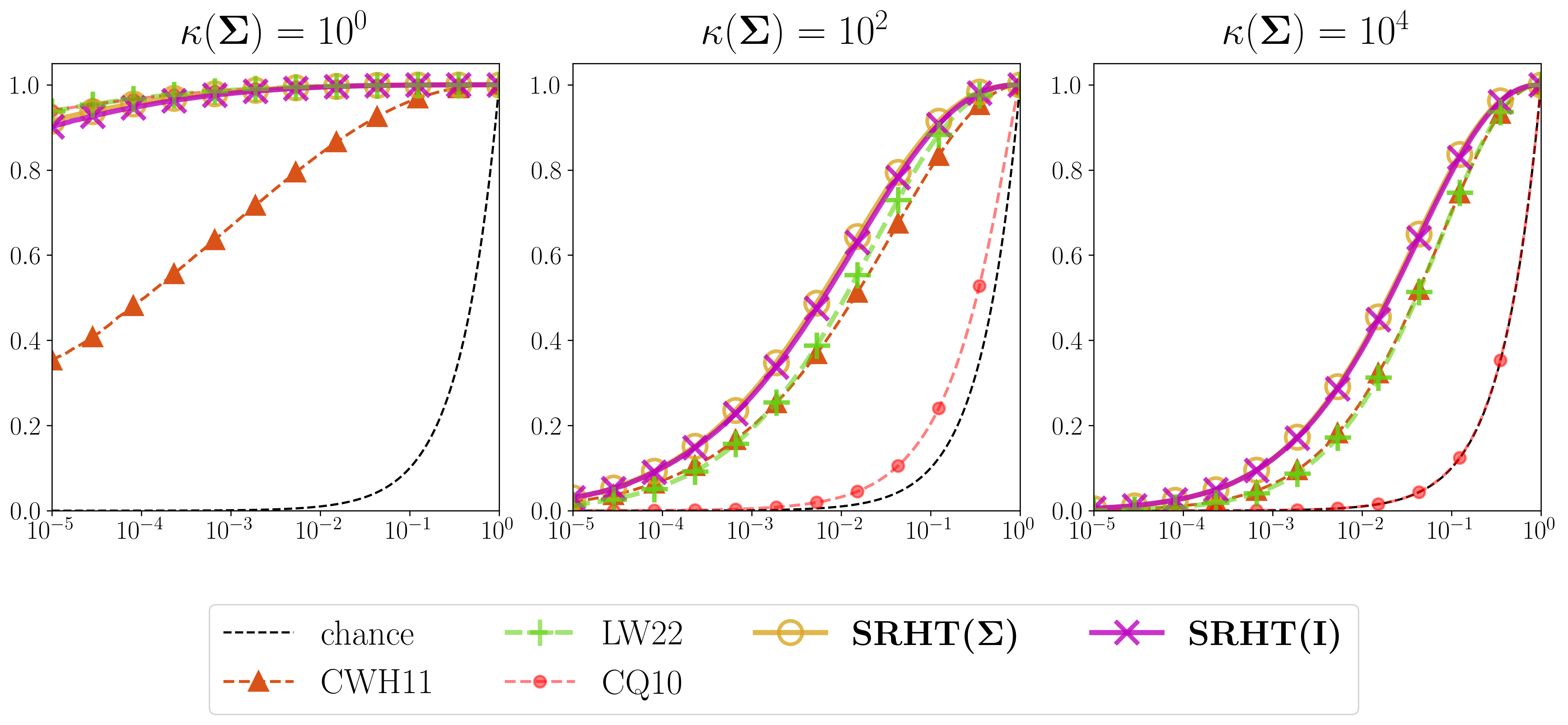}
        \caption{
        Size-adjusted empirical power of several methods, where $p=200$, $n=100$, the data are sub-Gaussian, and the true signal-dispersion matrix $\myOmega_n$ is   $\propto\mathbf{I}$.
        Our proposed methods SRHT($\mathbf{I}$) and SRHT($\bfr$) both 
        become increasingly competitive as the spectral complexity of $\bfr$ increases.
        }
    \label{fig:isotrue_subg_lss}
\end{figure}

\begin{figure}[htbp!]
\centering
{\large\textbf{Singular Regime, sub-Gaussian Data,  $\myOmega\propto\bfr$}}\\[1ex]
\includegraphics[width=\linewidth]{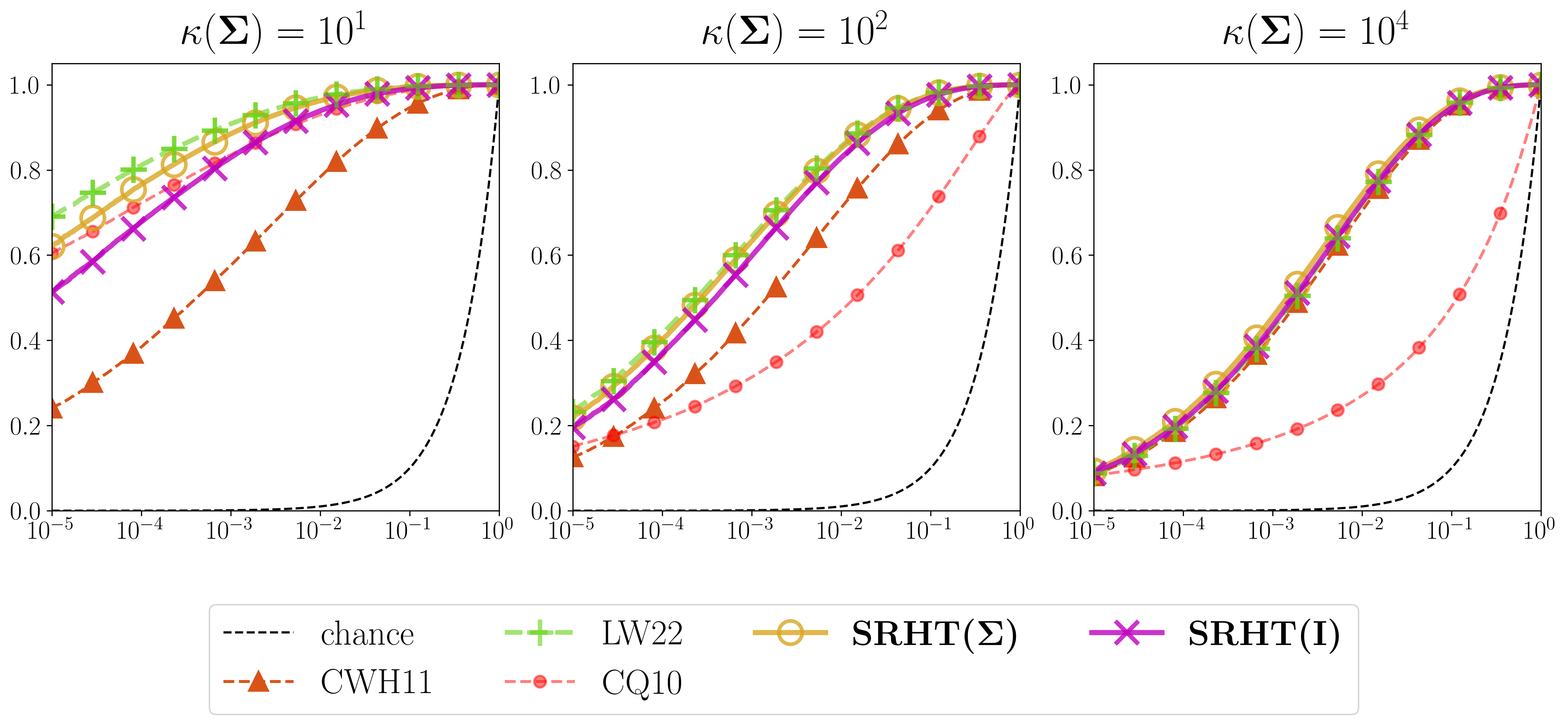}
        \caption{
        Size-adjusted empirical power of several methods, where $p=200$, $n=100$, the data are sub-Gaussian, and the true signal-dispersion matrix $\myOmega $ is $\propto\bfr$.
        While the performance of the low-variance \lwalg{} estimator is superior in the relatively well-conditioned case (left panel)---where the objective function $\mathcal{U}_{n,MV}$ it targets approximates our detection criterion $\mathcal{U}_n$---our proposed method SRHT($\bfr$) roughly matches or exceeds the top baseline as the spectral complexity of $\bfr$ increases (center and right panels).
        }
    \label{fig:isofalse_subg_lss}
\end{figure}

\begin{figure}[htbp]
    \centering
    {\large\textbf{Singular Regime, Heavy-Tailed Data, $\myOmega\propto \mathbf{I}$}}\\[1ex]
        \includegraphics[width=\linewidth]{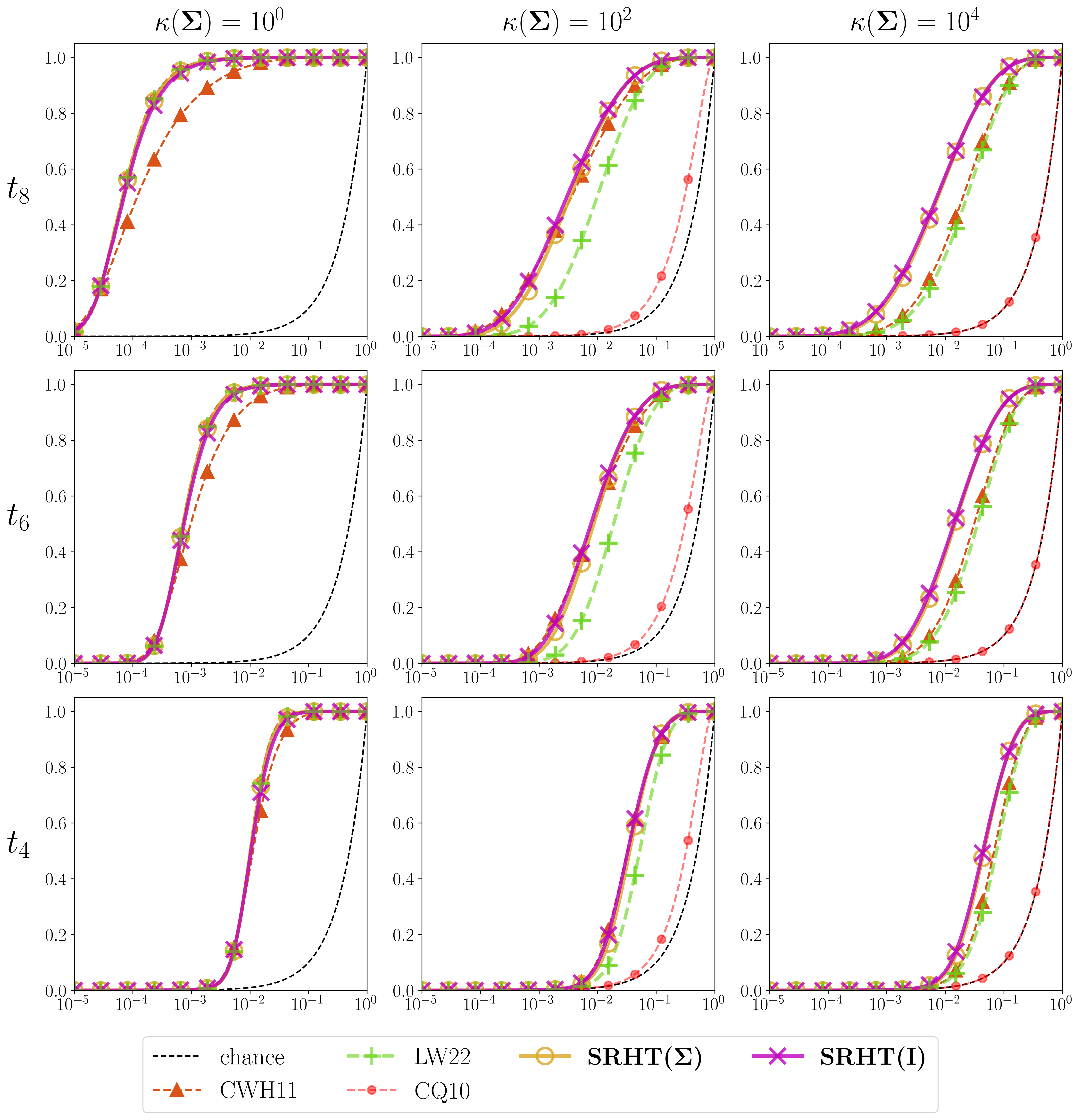}
        \caption{
        Size-adjusted empirical power of several methods, where $p=200$, $n=100$, the noise component $W\sim t_\nu$ (Student-$t$ model with $\nu$ degrees of freedom), and the true signal-dispersion matrix $\myOmega$ proportional to $\mathbf{I}$.
        In other words, an analogous figure to Figure~\ref{fig:isotrue_nu4} for $n=100$ rather than $n=300$.
        }
    \label{fig:isotrue_nu4_n100}
\end{figure}

\begin{figure}[htbp]
    \centering
    {\large\textbf{Singular Regime, Heavy-Tailed Data, $\myOmega\propto \bfr$}}\\[1ex]
        \includegraphics[width=\linewidth]{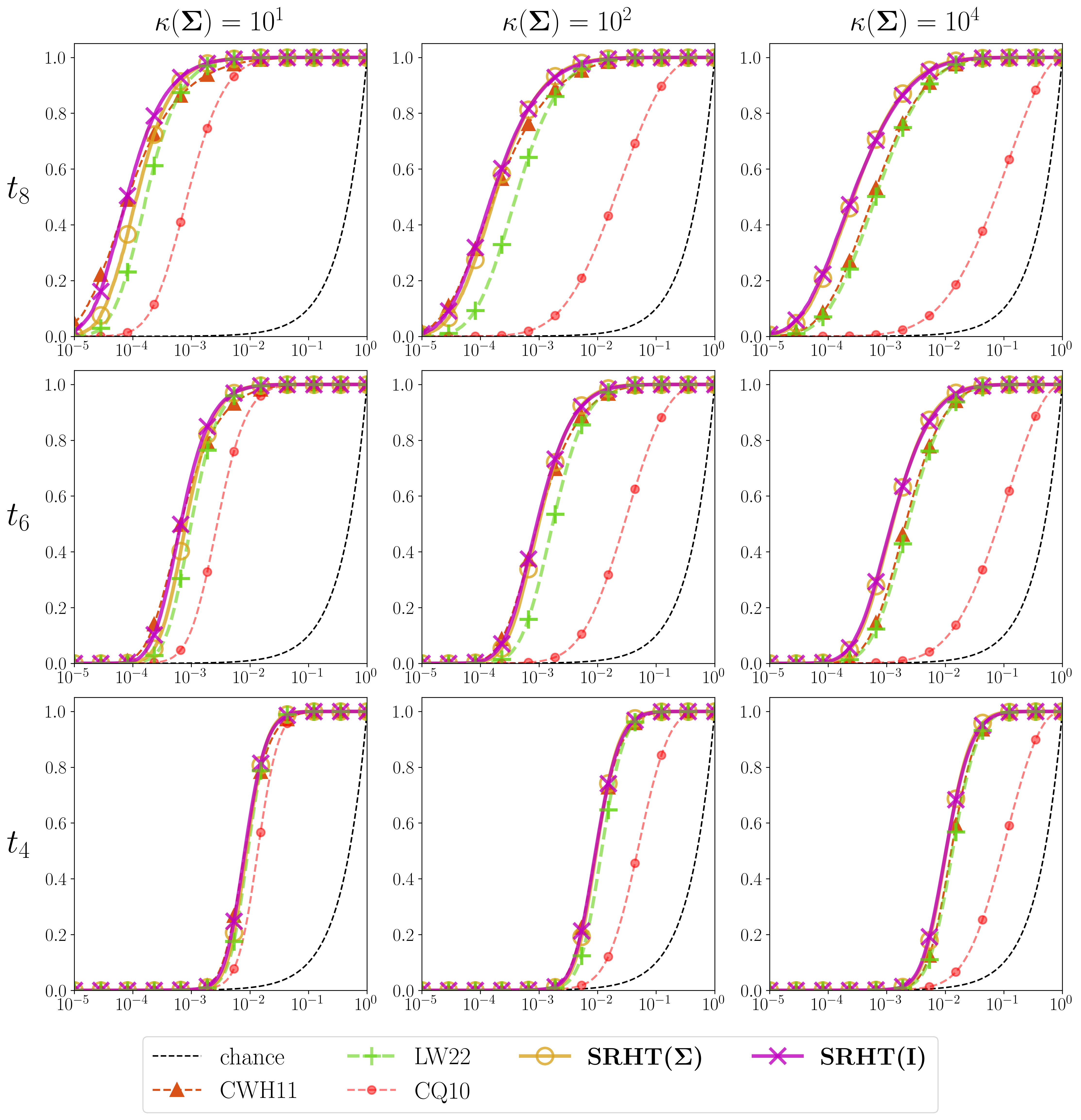}
        \caption{
        Size-adjusted empirical power of several methods, where $p=200$, $n=300$, the noise component $W\sim t_\nu$ (Student-$t$ model with $\nu$ degrees of freedom), and the true signal-dispersion matrix $\myOmega$ proportional to $\bfr$.
        In this figure, analogous to Figure~\ref{fig:isofalse_nu4} for the case of $n=100$ rather than $n=300$, our methods roughly match or outperform the competition.
        }
    \label{fig:isofalse_nu4_n100}
\end{figure}

\clearpage

\section{Data-driven selection of $\overline{h}_n(\lambda)$} \label{sec:prior-selection}

In this section, we briefly provide a possible framework for estimating important properties of $\myOmega$ when only certain structural properties of $\myOmega$ are known.  More specifically, we provide a framework for estimating the weights $ \overline{h}_n(\lambda)$ needed for our main result:  the asymptotically optimal shrinkage estimator we construct in Theorem~5.8.
As noted in our Remark on choosing a prior for $\myOmega$ (Remark~5.9),
this task requires us to go beyond the setting of our paper and assume that $\mynone \ge 2$ i.i.d. samples $\by_i \sim \by$ are available.

The structural assumption we make is that $\myOmega$ is a polynomial in $\bfr$ with unknown coefficients.  This assumption is restrictive, but since we do not explicitly assume $p/\mynone\to 0$, we have no access to consistent estimators of the eigen-basis of $\myOmega$, so the eigen-basis must be assumed.  Furthermore, polynomials in $\bfr$ are norm-dense in the continuous functional calculus of $\bfr$, and this structural assumption is not without precedent: in \cite{li2020adaptable} the authors make an assumption that, in our scaling regime, translates to $p^{-1/2}\myOmega = \pit_0 \mathbf{I} + \pit_1 \bfr + \pit_2 \bfr^2$.  This structure corresponds to a prior assumption that $p^{-1/2}\myOmega$ is equal to $\mathbf{I}, \bfr,$ or $\bfr^2$ with probabilities $\pit_0+\pit_1+ \pit_2=1$, which can in principle capture fairly ill-conditioned choices of $p^{-1/2}\myOmega$.

A natural approach is a moment-matching one.  Letting $\tilde{S}$ be the sample covariance matrix of the $\by_i$'s, a simple observation is that 
\[
\frac{1}{p^{3/2}}\tr(\tilde{\bS}) = \frac{1}{(\mynone-1)p}\sum_{i=1}^p\left\Vert \by_i - \overline{\by}\right\Vert^2,
\]
which concentrates by Hanson--Wright to
\[
\frac{1}{p^{3/2}}\tr(\myOmega) = 
w_0 +  w_1 \frac{\tr \bfr}{p} + w_2 \frac{\tr\bfr^2}{p}
\]
where $\overline{\by}$ is the mean of the $\by_i$'s.  Similarly, $p^{-3/2}\tr(\tilde{\bS}\bS)$ concentrates to
\begin{align*}
 w_0 \frac{\tr\bfr}{p} + w_1 \frac{\tr(\bfr^2)}{p} + w_2 \frac{\tr(\bfr^3)}{p}.
\end{align*}
Due to our assumption that $\pit_0+\pit_1+\pit_2=1$, it becomes possible to solve a $3\times 3$ system for the $\pit_j$'s, assuming the moments $p^{-1}\tr(\bfr^k)$ can be stably estimated for $1\le k \le 3$. From this, the ideal if unobservable value for 
$\overline{h}(\lambda_i)$ would be
\[
\tilde{w}_0 + \tilde{w}_1 \bu_i\tps \bfr\bu_i + \tilde{w}_2 \bu_i\tps \bfr^2\bu_i,
\]
where $\tilde{w}_j$ are the weights obtained by solving.  To enforce non-negativity for finite sample sizes, these weights may be thresholded from below by zero.

Fortunately, it is unnecessary to consistently estimate $\bu_i\tps \bfr \bu_i$ and $\bu_i\tps \bfr^2 \bu_i$ for each individual $i$.  The main condition on $\overline{h}_n(\lambda)$ 
is an averaged absolute error condition in Theorem~5.8.
However, establishing this condition for the special case of $\myOmega = \bfr$ takes up the bulk of this paper's theoretical work, and re-establishing a similar condition for the case of $\myOmega=\bfr^2$ (to handle a quadratic in $\bfr$) would require substantial theoretical effort to fuse the explicit spectral laws of \cite{ledoit2011eigenvectors} with the optimal eigenvector-overlap rates of \cite{lin2026eigenvector}.  A possible solution involves trading theoretical effort and expressivity: one could avoid these theoretical difficulties, by forcing $\pit_2$ to be zero, but this severely limits the expressivity of the prior for $\myOmega$.

A second practical trade-off lies in the estimation of the higher-order moments $p^{-1}\mathrm{tr}(\bfr^k)$. While consistent estimators for these tracial moments are available in the literature \cite[Theorem~2]{ledoit2011eigenvectors}, their finite-sample stability can degrade as $k$ increases, particularly when $\bfr$ is ill-conditioned. Consequently, a practical implementation must carefully balance the expressivity of the prior (\textit{i.e.}, the degree of the polynomial) against the statistical and computational stability of the moment estimates.

Ultimately, the moment-matching framework outlined above demonstrates that a data-dependent estimation of the prior weights $h_n(\lambda_i)$ is entirely feasible in principle when $\mynone\ge 2$ and the polynomial structural assumption is made. However, fully operationalizing this procedure and navigating the above trade-offs
represents a substantial theoretical undertaking. We leave further development of such data-driven priors to future work.

\end{document}